\newcommand\Y{\mathbb Y}
\newcommand\Z{\mathbb Z}
\newcommand\C{\mathbb C}
\newcommand\R{\mathbb R}
\newcommand\GT{{\mathbb{GT}}}
\newcommand\Pas{\mathbb{P}}
\newcommand\YB{\mathbb{YB}}
\newcommand\al{\alpha}
\newcommand\be{\beta}
\newcommand\ga{\gamma}
\newcommand\Ga{\Gamma}
\newcommand\de{\delta}
\newcommand\La{\Lambda}
\newcommand\la{\lambda}
\newcommand\om{\omega}
\newcommand\Om{\Omega}
\newcommand\wt{\widetilde}
\newcommand\wh{\widehat}
\newcommand\const{\operatorname{const}}
\newcommand\Dim{\operatorname{Dim}}
\newcommand\Sym{\operatorname{Sym}}
\newcommand\down{{\downarrow}}
\newcommand\pd{\partial}
\newcommand\FS{F\!S}
\newcommand\B{\mathcal B}
\newcommand\TT{\mathcal T}
\newcommand\M{\mathcal M}
\newcommand\LB{{}^{\mathbb B}\!\La}
\newcommand\LY{{}^{\Y}\!\La}
\newcommand\LYB{{}^{\YB}\!\La}
\newcommand\LGT{{}^{\GT}\!\La}
\newcommand\MB{{}^{\mathbb B}\!M}
\newcommand\MY{{}^{\Y}\!M}
\newcommand\MYB{{}^{\YB}\!M}
\newcommand\MGT{{}^{\GT}\!M}
\newcommand\z{{(z,z')}}
\newcommand\zw{{(z,z',w,w')}}
\newtheorem{theorem}{Theorem}[subsection]
\newtheorem{proposition}[theorem] {Proposition}
\newtheorem{corollary}[theorem]{Corollary}
\newtheorem{lemma}[theorem]{ Lemma}
\theoremstyle{definition}
\newtheorem{definition}[theorem]{Definition}
\newtheorem{remark}[theorem]{Remark}
\newtheorem{example}[theorem]{Example}
\numberwithin{equation}{subsection}
\begin{document}

\title[The Young bouquet]
{The Young bouquet and its boundary}

\author{Alexei Borodin}
\address{California Institute of Technology, USA;
\newline\indent Massachusetts Institute of Technology, USA;
\newline\indent Institute for Information Transmission Problems, Moscow, Russia}
\email{borodin@math.mit.edu}

\author{Grigori Olshanski}
\address{Institute for Information Transmission Problems, Moscow, Russia;
\newline\indent Independent University of Moscow, Russia} \email{olsh2007@gmail.com}

\begin{abstract}
The classification results for the extreme characters of two basic ``big''
groups, the infinite symmetric group $S(\infty)$ and the infinite-dimensional
unitary group $U(\infty)$, are remarkably similar. It does not seem to be
possible to explain this phenomenon using a suitable extension of the
Schur-Weyl duality to infinite dimension. We suggest an explanation of a
different nature that does not have analogs in the classical representation
theory.

We start from the combinatorial/probabilistic approach to characters of ``big''
groups initiated by Vershik and Kerov. In this approach, the space of extreme
characters is viewed as a boundary of a certain infinite graph. In the cases of
$S(\infty)$ and $U(\infty)$, those are the Young graph and the Gelfand--Tsetlin
graph, respectively. We introduce a new related object that we call the Young
bouquet. It is a poset with continuous grading whose boundary we define and
compute. We show that this boundary is a cone over the boundary of the Young
graph, and at the same time it is also a degeneration of the boundary of the
Gelfand--Tsetlin graph.

The Young bouquet has an application to constructing infinite-dimensional
Markov processes with determinantal correlation functions.

\end{abstract}

\date{}

\maketitle

\tableofcontents

\section{Introduction}\label{sect1}

We start with a brief historic survey whose goal is to explain the motivation
behind our work. A description of our results starts in Section \ref{sc:1.5}.

\subsection{Characters of $S(n)$ and $U(N)$}

The symmetric group $S(n)$ of permutations of an $n$-element set
 is a simple yet fundamental example of a noncommutative finite group.
Similarly, the unitary group $U(N)$ of complex unitary matrices of size $N$ is
a basic example of a noncommutative compact group.

As is well known, the representation theory began with a sequence of papers by
Frobenius that culminated in a masterful computation of the irreducible
characters of $S(n)$ (see e.~g. Curtis \cite{Cur99} and references therein). An
analogous result for $U(N)$ was obtained by Weyl (see \cite{Wey39} and
references therein to Weyl's earlier journal publications of the twenties).

In modern textbooks one can find different approaches to those results, but if
one compares the original arguments of Frobenius and Weyl then their similarity
is apparent. In essence, Weyl builds his approach following Frobenuis' path.

Furthermore, the famous Schur-Weyl duality establishes a direct link between
the characters from the two families. With this duality and relatively simple
additional arguments, one can derive Weyl's character formula from the formula
of Frobenius and \emph{vice versa}. One reason for that is that the characters
of $S(n)$ and $U(N)$ have a common combinatorial base --- the Schur symmetric
functions.

Of course if one views the unitary groups $U(N)$ as a special case of the
reductive Lie groups and constructs a general theory of finite-dimensional
representations of those following the infinitesimal approach (replacing groups
by their Lie algebras) and Cartan's theory of highest weight, then the analogy
with representations of symmetric groups becomes more vague.

However, one can look at a different aspect of the theory --- explicit matrix
realization of representations. There are two classical results here, Young's
orthogonal form for the irreducible representations of $S(n)$ and
Gelfand-Tsetlin's formulas for the irreducible representations of $U(N)$. Both
results are based on the existence of a basis in an irreducible representation
that is connected to a chain of subgroups
\begin{equation}\label{eq1.A}
S(1)\subset S(2)\subset\dots\subset S(n) \quad   \textrm{and} \quad U(1)\subset
U(2)\subset\dots\subset U(N),
\end{equation}
respectively, and the analogy between the realizations in the Young basis and
in the Gelfand-Tsetlin basis is very clear (some authors even use the term
``Gelfand-Tsetlin basis'' for Young's basis as well).

Thus, one observes relations between symmetric and unitary groups both on the
level of characters and on the level of matrix realizations of the irreducible
representations. This is surprising as the groups themselves are structurally
quite different.

\subsection{Characters of $S(\infty)$ and $U(\infty)$}

One can go even further. Let us extend the group chains \eqref{eq1.A} to
infinity and consider the corresponding inductive limits --- the infinite
symmetric group $S(\infty):=\varinjlim S(n)$ and the infinite-dimensional
unitary group $U(\infty):=\varinjlim U(N)$. These two groups are neither finite
nor compact, and $U(\infty)$ is not even locally compact. Nevertheless, one can
modify the definition of an irreducible character in such a way that it would
make perfect sense for such ``big'' groups. We have in mind the so-called
extreme (or indecomposable) characters that correspond to finite factor
representations in the sense of von Neumann. (For the finite and compact groups
the extreme characters differ from the conventional irreducible ones only by
normalization.)

The extreme characters of $S(\infty)$ were first considered by Thoma
\cite{Tho64}, and 12 years later Voiculescu \cite{Vo76} wrote a paper on the
extreme characters of $U(\infty)$. It was discovered later (Vershik and Kerov
\cite{VK81}, \cite{VK82}; Boyer \cite{Boy83}) that the classification of the
extreme characters of both groups was implicitly contained in earlier works of
Schoenberg and his followers on totally positive matrices (Aissen, Edrei,
Schoenberg, and Whitney \cite{AESW51}; Aissen, Schoenberg, and Whitney
\cite{ASW52}; Edrei \cite{Ed52}, \cite{Ed53}).

It turns out that on the level of inductive limits the analogy between the
symmetric and unitary groups becomes even more apparent. The character formulas
of Thoma and Voiculescu are remarkably similar, and in the language of total
positivity the character classification admits a uniform description: In both
cases there exists a bijective correspondence between the extreme characters
and infinite totally positive Toeplitz matrices; in the first case (for
$S(\infty)$) one needs to consider only triangular matrices while in the second
case (for $U(\infty)$) no restriction is necessary. In both cases the
characters depend on infinitely many continuous parameters, and the set of
parameters for $U(\infty)$ is roughly double of that for $S(\infty)$.

\subsection{Harmonic analysis on $S(\infty)$ and $U(\infty)$}

The term ``harmonic analysis'' (in noncommutative setting) usually refers to
the set of questions related to the decomposition of the regular representation
and its relatives on irreducibles. However, for inductive limits like
$S(\infty)$ or $U(\infty)$, questions of that sort seemingly do not make sense.
For example, the group $U(\infty)$ does not have a Haar measure so its regular
representation simply does not exist. Nevertheless, there is a way of
circumvent this obstacle and construct a whole family of representations each
of which could play the role of the regular one.

The original idea is due to Pickrell \cite{Pic87}, Neretin presented its
generalization in
 \cite{Ner02}, and further developments (detailed analysis of the representations)
can be followed along Borodin and Olshanski \cite{BO00a}, \cite {BO05a},
\cite{BO05b}; Gorin \cite{Gor10}; Kerov, Olshanski, and Vershik \cite{KOV93},
\cite{KOV04}; Olshanski \cite{Ols03b}, \cite{Ols03c}; Osinenko \cite{Osi11}.
Some of these articles deal with the unitary group while the other ones deal
with the symmetric group, and once again one easily sees the parallelism
between the two cases. It shows in constructing extensions of the groups that
allow to define analogs of the Haar measure, in defining analogs of the regular
representation, and in the structure of decomposition of those.

\subsection{The Young graph and the Gelfand-Tsetlin graph}

The set of extreme characters of a given group $G$ may be viewed as a variant
of the dual object to $G$; for that reason we use the notation $\wh G$. Vershik
and Kerov (\cite{VK81}, \cite{VK90}) were first to observe that the dual object
$\wh{S(\infty)}$ to the infinite symmetric group can be defined in purely
combinatorial/probabilistic terms. More exactly, $\wh{S(\infty)}$ serves as a
``boundary'' for an infinite graph called the Young graph. Similarly,
$\wh{U(\infty)}$ is the ``boundary'' of a different graph called the
Gelfand--Tsetlin graph. (The term ``boundary'' carries roughly the same meaning
as in the theory of Markov processes; an exact definition is given in Section
\ref{sect2.2}.)

This interpretation leads to a fruitful connection between noncommutative
harmonic analysis and probability theory: As shown in \cite{BO09} and
\cite{BO10}, the spectral measures on the dual objects $\wh{S(\infty)}$ and
$\wh{U(\infty)}$ that arise from decomposing regular representations, serve as
stationary distributions for certain Markov processes.

The Young graph, denoted as $\Y$, encodes branching of the irreducible
characters of the group chain
\begin{equation*}
S(1)\subset S(2)\subset\dots\subset S(n)\subset S(n+1)\subset\cdots
\end{equation*}
Namely, the set of vertices of $\Y$ is the disjoint union of the dual objects
\begin{equation*}
\wh{S(1)}\sqcup \wh{S(2)} \sqcup \dots\sqcup \wh{S(n)} \sqcup \wh{S(n+1)}\sqcup
\cdots.
\end{equation*}
Since the irreducible characters of $S(n)$ are parametrized by the Young
diagrams with $n$ boxes, the set of vertices can be identified with the set of
all Young diagrams. Further, two vertices are joined by an edge if the
corresponding diagrams are different by exactly one box. This definition
reflects Young's branching rule: The restriction of the irreducible character
of $S(n+1)$ indexed by a Young diagram $\nu$ to $S(n)$ is the sum of exactly
those characters whose diagrams are obtained from $\nu$ by deleting a single
box.

Similarly, the Gelfand-Tsetlin graph, denoted as $\GT$, encodes branching of
the irreducible characters for the chain
\begin{equation*}
U(1)\subset U(2)\subset\dots\subset U(N)\subset U(N+1)\subset\cdots.
\end{equation*}
The set of vertices in $\GT$ is the disjoint union
\begin{equation*}
\wh{U(1)}\sqcup \wh{U(2)} \sqcup \dots\sqcup \wh{U(N)} \sqcup \wh{U(N+1)}\sqcup
\cdots.
\end{equation*}
The irreducible characters of $U(N)$ are parametrized by the integer-valued
vectors of length $N$ with nonincreasing coordinates,
$$
\mu=(\mu_1\ge\dots\ge\mu_N)\in\Z^N.
$$
Such vectors are called signatures. According to the branching rule for
irreducible characters of the unitary groups, two signatures of length $N$ and
$N+1$ are connected by an edge if their coordinates interlace:
$$
\la_1\ge \mu_1\ge\la_2\ge\dots \ge \la_N\ge \mu_N\ge\la_{N+1}.
$$

Both graphs $\Y$ and $\GT$ are graded in such a way that the edges can only
join vertices of adjacent levels. In $\Y$, the vertices of the level
$n=1,2,\dots$ are those Young diagrams that have exactly $n$ boxes, while in
$\GT$ the vertices of level $N=1,2,\dots$ are the signatures of length exactly
$N$.

Observe that any signature $\la=(\la_1,\dots,\la_N)$ can be viewed as a pair of
Young diagrams $(\la^+,\la^-)$, where the nonzero lengths of rows in $\la^+$
are the positive coordinates in $\la$, and the nonzero lengths of rows in
$\la^-$ are the absolute values of the negative coordinates in $\la$. This
observation contains a hint at the above mentioned fact that $\wh{U(\infty)}$
(= the boundary of $\GT$) has doubly many parameters comparing to
$\wh{S(\infty)}$ (= the boundary of $\Y$).

We now proceed to the content of the present article.

\subsection{What is the Young bouquet}\label{sc:1.5}

Although our last comment points to a certain similarity between $\Y$ and
$\GT$, the grading of the two is totally different: $n$ is the number of boxes
of a diagram (equivalently, the sum of lengths of its rows), while $N$ is the
length of a signature (or the number of its coordinates). Even if all the
coordinates of a signature $\la$ are nonnegative, i.~e. in the correspondence
$\la=(\la^+,\la^-)$ the second diagram $\la^-$ is empty and $\la$ is seemingly
reduced to $\la^+$, the quantities $n$ and $N$ have very different meanings.

The main idea of this paper is that in order to see a clear connection between
the graphs $\Y$ and $\GT$, one needs to introduce an intermediate object. This
new object, that we call the Young bouquet and denote as $\YB$, is not a graph.
However, $\YB$ is a graded poset, similarly to $\Y$ and $\GT$. One new feature
is that the grading in $\YB$ is not discrete but continuous; the grading level
is marked by a positive real number. By definition, the elements of $\YB$ of a
given level $r>0$ are pairs $(\nu, r)$, where $\nu$ is an arbitrary Young
diagram. The partial order in $\YB$ is defined as follows: $(\nu,r)<(\wt\nu,\wt
r)$ if $r<\wt r$ and diagram $\nu$ is contained in diagram $\wt\nu$ (or
coincides with it).

We explain how the boundary of the Young bouquet should be understood, and show
(Theorem \ref{thm4}) that it is a cone over the boundary of the Young graph.
This establishes a connection between $\Y$ and $\YB$. We also note that the
partial order in $\YB$ is obviously consistent with the inclusion partial order
on $\Y$.

On the other hand, we show that $\YB$ can be obtained from $\GT$ by a
degeneration procedure that can also be viewed as a kind of scaling limit
transition. More exactly, one has to start with $\GT$'s subgraph $\GT^+$
consisting of signatures with nonnegative coordinates, and in the degeneration
$\GT^+\to\YB$ one renormalizes the levels, which turns the discrete grading
into a continuous one.

Because of these two relationships, with $\Y$ and with $\GT$, we say that $\YB$
is a suitable intermediate object between $\Y$ and $\GT$.

The notion of Young bouquet is perfectly consistent with the concept of ``grand
canonical ensembles'' of random Young diagrams: The well-known model of
poissonized Plancherel measures \cite{BDJ99} and a more general model of mixed
z-measures \cite{BO00a} become more natural when placed within the context of
the Young bouquet.

\subsection{Degeneration $\GT^+\to\YB$}

While the connection between $\Y$ and $\YB$ is fairly obvious, the degeneration
$\GT^+\to\YB$ deserves to be explained in more detail.

(a) An exact statement of what we mean by the degeneration of the graph $\GT^+$
to the poset $\YB$ is contained in Theorem \ref{thm5}. The statement involves a
degeneration of a certain transition function that is canonically associated to
$\GT$, to the transition function canonically associated to $\YB$. (Let us also
mention here that our ``boundary'' is always the entrance boundary for a
certain transition function. The graph and poset structure are mostly needed to
define that transition function.)

(b) In Theorem \ref{thm7} we explain in what sense the boundary of $\YB$
(recall that it is a cone over the boundary of $\Y$) can be obtained as a
degeneration of the boundary of $\GT^+$.

(c) Theorem \ref{thm6} shows that the degeneration $\GT^+\to\YB$ is accompanied
by degeneration of certain probability measures that originate in harmonic
analysis on $S(\infty)$ and $U(\infty)$. This aspect of the degeneration
$\GT^+\to\YB$ can be compared to a descent in the hierarchy of the
hypergeometric orthogonal polynomials.

(d) Finally, in Section \ref{sect5} we discuss the spaces of monotone paths in
the posets $\Y$, $\YB$, $\GT$, and Gibbs measures on those spaces. We show that
the degeneration $\GT^+\to\YB$ can be described in this context as well. The
finite monotone paths in $\Y$ and $\GT^+$ have well known combinatorial
interpretations; these are the standard and semistandard Young tableaux,
respectively. One can interpret the finite monotone paths in $\YB$ in a similar
fashion: Those are Young diagrams filled with positive real numbers with the
same monotonicity conditions along rows and columns as in the definition of the
standard Young tableaux.

\subsection{An application}

In \cite{BO10} we constructed a family of Markov processes on the dual object
$\wh{U(\infty)}$ using its identification with the boundary of $\GT$. On the
other hand, \cite{Ols10} contained an announcement of the existence of a
similar model of Markov dynamics, where the state space is the cone over
$\wh{S(\infty)}$; in another interpretation, this is a dynamical model of
determinantal processes with infinitely many particles. The construction of the
Young bouquet allows one to give a simpler proof of that result of \cite{Ols10}
using the approach of \cite{BO10}; this is a subject of the follow-up paper
\cite{BO11b}.

\subsection{Acknowledgments}

A.~B. was partially supported by NSF-grant DMS-1056390. G.~O. was partially
supported by a grant from Simons Foundation (Simons--IUM Fellowship), the
RFBR-CNRS grant 10-01-93114, and the project SFB 701 of Bielefeld University.

\section{Graded graphs and projective systems}\label{sect2}

\subsection{The category $\B$}\label{sect2.1}
About the notions used in this subsection see \cite{Mack57} and \cite{Mey66}. A
{\it measurable space\/} (also called {\it Borel space\/}) is a set with a
distinguished sigma-algebra of subsets. Denote by $\B$ the category whose
objects are {\it standard\/} measurable spaces and morphisms are Markov
kernels. A morphism between two objects will be denoted by a dash arrow,
$X\dasharrow Y$, to emphasize that it is not an ordinary map. Recall that a
(stochastic) {\it Markov kernel\/} $\La:X\dasharrow Y$ between two measurable
spaces $X$ and $Y$ is a function $\La(a,A)$, where $a$ ranges over $X$ and $A$
ranges over measurable subsets of $Y$, such that $\La(a,\,\cdot\,)$ is a
probability measure on $Y$ for any fixed $a$ and $\La(\,\cdot\,,A)$ is a
measurable function on $X$ for any fixed $A$.

Below we use the short term {\it link\/} as a synonym of ``Markov kernel''. The
composition of two links will be read from left to right: Given
$\La:X\dasharrow Y$ and $\La':Y\dasharrow Z$, their composition
$\La\La':X\dasharrow Z$ is defined as
$$
(\La\La')(x,dz)=\int_Y \La(x,dy)\La'(y,dz),
$$
where $\La(x,dy)$ and $\La'(y,dz)$ symbolize the measures $\La(x,\,\cdot\,)$
and $\La'(y,\,\cdot\,)$, respectively.

A {\it projective system\/} in $\B$ is a family $\{V_i, \La^j_i\}$ consisting
of objects $V_i$ indexed by elements of a linearly ordered set $I$ (not
necessarily discrete), together with links $\La^j_i:V_j\dasharrow V_i$ defined
for any couple $i<j$ of indices, such that for any triple $i<j<k$ of indices,
one has $ \La^k_j\La^j_i=\La^k_i$.

A {\it limit object\/}  of a projective system is understood in the categorical
sense: This is an object $X=\varprojlim V_i$ together with links
$\La^\infty_i:X\dasharrow V_i$ defined for all $i\in I$, such that:

\begin{itemize}

\item[$\bullet$] $\La^\infty_j\La^j_i=\La^\infty_i$ for all $i<j$;

\item[$\bullet$] if an object $Y$ and links $\wt\La^\infty_i:Y\dasharrow V_i$
satisfy the similar condition, then there exists a unique link
$\La^Y_X:Y\dasharrow X$ such that $\wt\La_i=\La^Y_X\La^\infty_i$.

\end{itemize}

General results concerning existence and uniqueness of limit objects in $\B$
can be found in Winkler \cite[Chapter 4]{Wi85}. See also Dynkin \cite{Dy71},
\cite{Dy78}, Kerov and Orevkova \cite{KeOr90}. When the index set $I$ is a
subset of $\R$ and all spaces $V_i$ are copies of one and the same space $X$,
our definition of projective system turns into the classical notion of {\it
transition function\/} on $X$ (within inversion of order on $I$).

For a measurable space $X$ we denote by $\M(X)$ the set of probability measures
on $X$. It is itself a measurable space: the corresponding sigma-algebra is
generated by the sets of the form $\{\mu\in\M(X): \mu(A)\in B\}$, where
$A\subseteq X$ is a measurable  and $B\subseteq\R$ is Borel. Equivalently, the
measurable structure of $\M(X)$ is determined by the requirement that for any
bounded measurable function on $X$, its coupling with $M$ should be a
measurable function in $M$. If $X$ is standard, then $\M(X)$ is standard, too.

Observe that any link $\La:X\dasharrow Y$ gives rise to a measurable map
$\M(X)\to \M(Y)$, which we write as $M\mapsto M\La$. Consequently, any
projective system $\{V_i, \La^j_i\}$ in $\B$ gives rise to the conventional
projective limit of sets
$$
\M_\infty:=\varprojlim_I\M(V_i).
$$
An element of $\M_\infty$ is called a {\it coherent family of measures\/}: By
the very definition, it is a family of probability measures $\{M_i\in\M(V_i):
i\in I\}$ such that for any couple $i<j$ one has $M_j\La^j_i=M_i$. (In the case
of a transition function, Dynkin \cite{Dy78} terms elements of $\M_\infty$ {\it
entrance laws\/}.)

If a limit object $X$ exists, then there is a canonical map
$$
\M(X)\to\M_\infty.
$$

From now on we will gradually narrow the setting of the formalism and will
finally focus on the study of some concrete examples.

\subsection{Projective chains}\label{sect2.2}
Consider a particular case of a projective system, where all spaces are
discrete (finite or countably infinite) and the indices range over the set
$\{1,2,\dots\}$ of natural numbers. Such a system is uniquely determined by the
links $\La^{N+1}_N$, $N=1,2,\dots$:
\begin{equation}\label{eq3}
V_1\dashleftarrow V_2\dashleftarrow\dots\dashleftarrow V_N\dashleftarrow
V_{N+1} \dashleftarrow \cdots.
\end{equation}
Note that a link between two discrete spaces is simply a stochastic matrix, so
that $\La^{N+1}_N:V_{N+1}\dasharrow V_N$ is a stochastic matrix whose rows are
parametrized by points of $V_{N+1}$ and columns are parametrized by points of
$V_N$:
\begin{gather*}
\La^{N+1}_N=[\La^{N+1}_N(x,y)], \quad x\in V_{N+1}, \, y\in V_N, \\
\textrm{$\La^{N+1}_N(x,y)\ge0$\; for every  $x,y$, \quad $\sum_{y\in
V_N}\La^{N+1}_N(x,y)=1$ for every $x$}.
\end{gather*}
For arbitrary $N'>N$, the corresponding link $\La^{N'}_N: V_{N'}\dasharrow V_N$
is a stochastic matrix of format $V_{N'}\times V_N$, which factorizes into a
product of stochastic matrices corresponding to couples of adjacent indices:
$$
\La^{N'}_N=\La^{N'}_{N'-1}\dots\La^{N+1}_N.
$$

We call such a projective system a {\it projective chain\/}. It gives rise to a
chain of ordinary maps
\begin{equation}\label{eq3.1}
\M(V_1)\leftarrow
\M(V_2)\leftarrow\dots\leftarrow\M(V_N)\leftarrow\M(V_{N+1})\leftarrow \cdots
\end{equation}
Note that $\M(V_N)$ is a simplex whose vertices can be identified with the
points of $V_N$, and the arrows are affine maps of simplices. In this situation
a coherent family (that is, an element of $\M_\infty$)  is a sequence
$\{M_N\in\M(V_N): N=1,2,\dots\}$ such that
$$
M_{N+1}\La^{N+1}_N=M_N, \qquad N=1,2,\dots\,.
$$
Here we can interpret measures as row vectors, so that the left-hand side is
the product of a row vector by a matrix. In more detail, the equation can be
written as
$$
\sum_{x\in V_{N+1}}M_{N+1}(x)\La^{N+1}_N(x,y)=M_N(y), \qquad \forall y\in V_N.
$$

Note that the set $\M_\infty$ may be empty, as the following simple example
shows: Take $V_N=\{N,N+1,N+2,\dots\}$ and define $\La^{N+1}_N$ as the natural
embedding $V_{N+1}\subset V_N$. In what follows we tacitly assume that
$\M_\infty$ is nonempty. This holds automatically if all $V_N$ are finite sets.

We may view $\M_\infty$ as a subset of the real vector space
$$
L:=\R^{V_1\sqcup V_2\sqcup V_3\sqcup\dots}.
$$
Here the set $V_1\sqcup V_2\sqcup V_3\sqcup\dots$ is the disjoint union of
$V_N$'s. Since this set is countable, the space $L$ equipped  with the product
topology is locally convex and metrizable. Clearly, $\M_\infty$ is a convex
Borel subset of $L$, hence a standard Borel space.

Let $V_\infty$ be the set of extreme points of $\M_\infty$. We call $V_\infty$
the {\it boundary\/} of the chain $\{V_N,\La^{N+1}_N\}$.

\begin{theorem}\label{thm1}
If\/ $\M_\infty$ is nonempty, then the boundary\/ $V_\infty\subset\M_\infty$ is
a nonempty measurable subset {\rm(}actually, a subset of type $G_\de${\rm)}
of\/ $\M_\infty$, and there is a natural bijection
$\M_\infty\leftrightarrow\M(V_\infty)$, which is an isomorphism of measurable
spaces.
\end{theorem}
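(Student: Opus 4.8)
The plan is to recognize $\M_\infty$ as a metrizable Choquet simplex and to read off all three assertions from the Choquet–Bishop–de~Leeuw integral representation theorem together with the uniqueness property that characterizes simplices. I would first treat the case when every $V_N$ is finite. Then each $\M(V_N)$ is a compact finite-dimensional simplex, the connecting maps in \eqref{eq3.1} are continuous and affine, and a projective limit of compact simplices along affine maps is again a compact metrizable simplex. This realizes $\M_\infty$ as such a simplex, sitting as a convex compact subset of the Polish space $L$, and the classical theory then applies directly.

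For the nonemptiness and $G_\delta$ assertions I would argue as follows. A point $M\in\M_\infty$ fails to be extreme exactly when $M=\tfrac12(M'+M'')$ for some $M'\neq M''$ in $\M_\infty$; writing $s(M',M'')=\tfrac12(M'+M'')$ and stratifying by $d(M',M'')\ge 1/n$ for a fixed metric $d$ on $L$, the set of non-extreme points is the countable union $\bigcup_n s(\{d\ge 1/n\})$, each term of which is compact, hence closed, in the compact case. Thus the non-extreme points form an $F_\sigma$, so $V_\infty$ is a $G_\delta$, and nonemptiness is Krein–Milman.

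Next I would set up the bijection. Since $V_\infty$ is a $G_\delta$ subset of the standard space $\M_\infty$, it is standard Borel, so $\M(V_\infty)$ is standard Borel as well. Define $\Phi\colon\M(V_\infty)\to\M_\infty$ by sending $P$ to its barycenter, computed coordinatewise in $L$ by $\Phi(P)_v=\int_{V_\infty}\xi_v\,P(d\xi)$ for $v\in V_1\sqcup V_2\sqcup\cdots$; since each coordinate $\xi\mapsto\xi_v$ is a bounded Borel function and, at every level $N$, the coordinates satisfy $\sum_{v\in V_N}\xi_v\equiv 1$ with all terms in $[0,1]$, Tonelli shows $\Phi(P)$ is again a coherent family, i.e. lands in $\M_\infty$. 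The map $\Phi$ is affine, and it is a bijection by the Choquet theory of simplices: every $M\in\M_\infty$ possesses a \emph{unique} maximal representing measure, the maximal measures are precisely the probability measures carried by the extreme points (using that $V_\infty$ is $G_\delta$), and distinct points have distinct barycenters. Finally, $\Phi$ is Borel because each coordinate $P\mapsto\int\xi_v\,P(d\xi)$ is Borel by the very definition of the measurable structure on $\M(V_\infty)$; being a Borel bijection between standard Borel spaces, $\Phi$ is a Borel isomorphism by the Lusin–Souslin theorem, which yields the desired measurable isomorphism $\M_\infty\leftrightarrow\M(V_\infty)$.

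The main obstacle is the passage from finite to arbitrary countably infinite $V_N$, where $\M(V_N)$ and $\M_\infty$ need not be compact, so that neither Krein–Milman nor the compact Choquet theorem applies directly and $s(\{d\ge 1/n\})$ need not be closed. I would resolve this by compactification: replace each $V_N$ by a metrizable compactification $\bar V_N$, extend the links to $\bar\La^{N+1}_N$, and form the compact metrizable simplex $\bar\M_\infty=\varprojlim\M(\bar V_N)$, to which the compact theory applies. One then identifies $\M_\infty$ with the Borel subset of $\bar\M_\infty$ of families carrying no mass on the added points, checks that its extreme points are exactly $V_\infty$, and transfers the $G_\delta$ property, the integral representation, and its uniqueness back to $\M_\infty$; the delicate step is controlling the extreme points that live ``at infinity'' and verifying that representing measures of genuine elements of $\M_\infty$ remain supported on $V_\infty$. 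Alternatively, one can invoke the non-compact simplex theory of Winkler \cite[Chapter 4]{Wi85} together with Dynkin's treatment of entrance laws \cite{Dy78}, which are tailored to precisely this situation.
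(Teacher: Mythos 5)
Your proposal is correct and takes essentially the same approach as the paper: the paper does not prove Theorem \ref{thm1} itself but refers to \cite[\S9]{Ols03c} for a proof based on Choquet's theorem and to \cite[Chapter 4]{Wi85} for a more general result, which is precisely the Choquet-simplex route (with a compactification to handle countably infinite levels, and Winkler/Dynkin as fallback) that you outline. The one step you flag as delicate --- that maximal representing measures of points of $\M_\infty$ remain concentrated on $V_\infty$ after compactifying --- does go through, since $\M_\infty$ is a $G_\de$ face of the compactified simplex cut out by the vanishing of the affine upper semicontinuous functions $\bar M\mapsto\bar M_N(\{\infty\})$, for which the barycenter formula holds on a metrizable compact convex set.
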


A proof based on Choquet's theorem is given in \cite[\S9]{Ols03c}, a much more
general result is contained in \cite[Chapter 4]{Wi85}.

By the very definition of the boundary $V_\infty$, it comes with canonical
links
$$
\La^\infty_N:V_\infty\dasharrow V_N, \qquad N=1,2,\dots\,.
$$
Namely, given a point $\om\in V_\infty\subset\M_\infty$, let $\{M_N\}$ stand
for the corresponding sequence of measures; then, by definition,
$$
\La^\infty_N(\om,x)=M_N(x), \qquad x\in V_N, \quad N=1,2,\dots\,.
$$
Here, to simplify the notation, we write $\La^\infty_N(\om,x)$ instead of
$\La^\infty_N(\om,\{x\})$.

{}From the definition of $\La^\infty_N$ it follows that
$$
\La^\infty_{N+1}\La^{N+1}_N=\La^\infty_N, \qquad N=1,2,\dots\,.
$$
Now it is easy to see that the boundary $V_\infty$ coincides with the
categorical projective limit of the initial chain \eqref{eq3}.

\begin{remark}\label{rem2}
In the context of Theorem \ref{thm1}, assume we are given a standard measurable
space $X$ and links $\La^X_N: X\dasharrow V_N$, $N=1,2,\dots$, such that:
\begin{itemize}
\item[$\bullet$] $\La^X_{N+1}\La^{N+1}_N=\La^X_N$ for all $N$;

\item[$\bullet$] the induced map $\M(X)\to\M_\infty=\varprojlim\M(V_N)$ is a
bijection.
\end{itemize}
Then $X$ coincides with the boundary $V_\infty$. Indeed, the maps
$\M(X)\to\M_N$ are measurable, whence the map $\M(X)\to\M_\infty$ is
measurable, too. Since $\M(X)$ is standard (because $X$ is standard), the
latter map is an isomorphism of measurable spaces (see \cite[Theorem
3.2]{Mack57}) and the claim becomes obvious.
\end{remark}

\begin{remark}\label{rem1}
Theorem \ref{thm1} immediately extends to the case of a projective system
$\{V_i, \La^j_i\}$, where all $V_i$'s are discrete spaces (finite or countable)
and the directed index set $I$ is countably generated, that is, contains a
sequence $i(1)<i(2)<\dots$ such that any $i\in I$ is majorated by indices
$i(N)$ with $N$ large enough. Indeed, it suffices to observe that the space
$\varprojlim \M(V_{i(N)})$ does not depend on the choice of $\{i(N)\}$. Such a
situation is examined in Section \ref{sect3}, where the index set $I$ is the
halfline $\R_{>0}$.
\end{remark}

\subsection{Graded and branching graphs}

\begin{definition}\label{def2}
By a {\it graded graph\/} we mean a graph $\Ga$ with countably many vertices
partitioned into {\it levels\/} enumerated by numbers $1,2,\dots$, and such
that (below $|v|$ denotes the level of a vertex $v$):

\begin{itemize}

\item[$\bullet$] if two vertices $v$, $v'$ are joined by an edge then
$|v|-|v'|=\pm1$;

\item[$\bullet$] multiple edges between $v$ and $v'$ are allowed;

\item[$\bullet$] each vertex $v$ is joined with a least one vertex of level
$|v|+1$;

\item[$\bullet$] if $|v|\ge2$, then the set of vertices of level $|v|-1$ joined
with $v$ is finite and nonempty.

\end{itemize}
\end{definition}

This is a natural extension of the well-known notion of a {\it Bratteli
diagram\/} \cite{Br72}: the difference between the two notions is that a
Bratteli diagram has finitely many vertices at each level, whereas our
definition allows countable levels.

Sometimes it is convenient to slightly modify the above definition by adding to
$\Ga$ a single vertex of level 0 joined by edges with all vertices of level 1.

\begin{example}
The simplest nontrivial example of a graded graph is the {\it Pascal graph\/}
$\Pas$, also called the {\it Pascal triangle\/}. The vertices of $\Pas$ are
points $(n_1,n_2)$ of the lattice $\Z^2$ with nonnegative coordinates, the
edges join points with one of the coordinates shifted by $\pm1$, and the level
is defined as the sum $|(n_1,n_2)|=n_1+n_2$. A number of other examples are can
be found in Kerov's book \cite{Ke03} and also in Gnedin \cite{Gn97}, Gnedin and
Olshanski \cite{GO06}, Kingman \cite{Ki78}.
\end{example}

\begin{definition}[Branching graphs]\label{ex2}
Given a chain of finite or compact groups embedded to each other,
\begin{equation}\label{eq32}
G(1)\subset G(2)\subset\dots\subset G(N-1)\subset G(N)\subset\dots,
\end{equation}
one constructs a graded graph $\Ga=\Ga(\{G(N)\})$, called the {\it branching
graph\/} of the group chain \eqref{eq32}, as follows. The vertices of level $N$
are the labels of the equivalence classes of irreducible representations of
$G(N)$. Choose a representation $\pi_v$ for each vertex $v$. Two vertices $u$
and $v$ of levels $N$ and $N-1$, respectively, are joined by $m$ edges if
$\pi_{u}$ enters the the decomposition of $\pi_v\downarrow G(N-1)$ with
multiplicity $m$, with the understanding that that there are no edges if $m=0$.
\end{definition}

Of particular importance for us are two branching graphs: the Young graph and
the Gelfand-Tsetlin graph; they are obtained from the chains of symmetric
groups and compact unitary groups, respectively. These graphs are discussed
below, see Sections \ref{Young} and \ref{GT}.

\begin{definition}
Given a graded graph $\Ga$, the {\it dimension\/} of a vertex $v$, denoted by
$\dim v$, is defined as the number of all (monotone) paths in $\Ga$ of length
$|v|-1$ starting at some vertex of level 1 and ending at $v$ (for more detail
about paths, see Section \ref{sect5.1} below). Further, for an arbitrary vertex
$u$ with $|u|<|v|$, the {\it relative dimension\/} $\dim(u,v)$ is the number of
(monotone) paths of length $|v|-|u|$ joining $u$ to $v$. In particular, if
$|u|=|v|-1$, then $\dim(u,v)$ is the number of edges between $u$ and $v$.
\end{definition}

For instance, in the case of the Pascal graph $\Ga=\Pas$, if $v=(n_1,n_2)$ and
$u=(m_1,m_2)$, $u\ne v$, then the dimensions are binomial coefficients:
$$
\dim v=\frac{(n_1+n_2)!}{n_1!n_2!}, \qquad \dim(u,v)=\begin{cases}
\dfrac{(n_1+n_2-m_1-m_2)!}{(n_1-m_1)!(n_2-m_2)!}, &\textrm{$n_1\ge m_1$ and
$n_2\ge m_2$}\\ 0, &\textrm{otherwise}.
\end{cases}
$$

Note that if $\Ga$ is a branching graph, then $\dim v$ is the dimension of the
corresponding representation $\pi_v$ and $\dim(u,v)$ is the multiplicity of
$\pi_u$ in the decomposition of representation $\pi_v$ restricted to the
subgroup $G(|u|)\subset G(|v|)$.

Obviously, one has
$$
\dim v=\sum_{u:\, |u|=|v|-1} \dim u\,\dim(u,v).
$$
This leads to

\begin{definition}[Projective chains associated to graded graphs]
Any graded graph $\Ga$ gives rise to a chain $\{V_N,\La^{N+1}_N\}$, where $V_N$
consists of the vertices of level $N$ and
$$
\La^{N+1}_N(v,u)=\frac{\dim u\cdot\dim(u,v)}{\dim v}, \qquad v\in V_{N+1},
\quad u\in V_{N}.
$$
The boundary $V_\infty$ of this chain is also referred to as the {\it boundary
of the graph\/} $\Ga$ and denoted as $\pd\Ga$.
\end{definition}

More generally, for  $N<N'$ we set
$$
\La^{N'}_N:=\La^{N'}_{N'-1}\dots\La^{N+1}_N.
$$
Then
\begin{equation}\label{eq22}
\La^{N'}_N(v,u)=\frac{\dim u\cdot\dim(u,v)}{\dim v}, \qquad u\in V_N, \quad
v\in V_{N'}.
\end{equation}

If $\Ga$ is a branching graph coming from a group chain \eqref{eq32}, then the
boundary $\pd\Ga$ has a representation-theoretic meaning. Namely, the points of
$\pd\Ga$ can be identified with the indecomposable normalized characters of the
inductive limit group $G(\infty):=\varinjlim G(N)$ (these are the normalized
traces of finite factor representations of $G(\infty)$). See Thoma
\cite{Tho64}, Vershik and Kerov \cite{VK90}, Voiculescu \cite{Vo76}.

\begin{example}[The boundary of the Pascal graph $\Pas$]
The boundary $\pd\Pas$ can be identified with the closed unit interval
$[0,1]\subset\R$ (this fact is equivalent to de Finetti's theorem, see Section
\ref{sect5.2} below). For $\om\in[0,1]$ and a vertex $v=(n_1,n_2)$ of level
$N=n_1+n_2$ one has
$$
\La^\infty_N(\om,v)=\frac{(n_1+n_2)!}{n_1!n_2!}\om^{n_1}(1-\om)^{n_2}.
$$
Thus $\La^\infty_N(\om,\,\cdot\,)$ is the binomial distribution on
$\{0,\dots,N\}$ with parameter $\om$. Note also that
$$
\La^N_{N-1}(v,v')=\begin{cases}\dfrac{n_1}{n_1+n_2}, & v'=(n_1-1,n_2)\\
\smallskip\\
\dfrac{n_2}{n_1+n_2}, & v'=(n_1,n_2-1).
\end{cases}
$$
\end{example}

\section{The Young bouquet}\label{sect3}

\subsection{The binomial projective system $\mathbb B$}
Here we discuss a simple example of a projective system with continuous index
set. This system will serve us as a building block in a more complex
construction.

\begin{definition}
The {\it binomial projective system\/} $\mathbb B$ has the index set
$I=\R_{>0}$ (strictly positive real numbers). All the spaces $V_r$ are discrete
and are copies of the set $\Z_+:=\{0,1,2,\dots\}$ of nonnegative integers. The
links are defined by formula
\begin{equation}\label{eq11}
\LB^{r'}_{r}(n, m)=
\left(1-\frac{r}{r'}\right)^{n-m}\left(\dfrac{r}{r'}\right)^m\,
\dfrac{n!}{(n-m)!\,m!}\,, \qquad n,\, m\in\Z_+.
\end{equation}
\end{definition}

\medskip

Note that the right-hand side vanishes unless $m\le n$. For $n$ fixed the
quantities $\LB^{r'}_{r}(n, m)$ form the binomial distribution on
$\{0,1,\dots,n\}$ with parameter $r/r'$, which explains the name of the system.

Clearly, $\LB^{r'}_{r}$ is a stochastic matrix. Thus, to see that the
definition is correct we have only check the compatibility condition
$$
\LB^{r''}_{r'}\, \LB^{r'}_{r}=\LB^{r''}_{r}, \qquad r''> r'>r.
$$
Or, in more detail,
$$
\sum_{n}\LB^{r''}_{r'}(l,n)\,\LB^{r'}_{r}(n,m)=\LB^{r''}_{r}(l,m).
$$
But this is an easy exercise.

\begin{remark}
Setting $r=e^{-t}$ we may view the binomial projective system as a
time-stationary transition function on $\Z_+$:
$$
p(s,n;t,m)=\left(1-e^{s-t}\right)^{n-m}e^{(s-t)m}\, \dfrac{n!}{(n-m)!\,m!}\,,
\qquad s<t, \quad  n,\, m\in\Z_+.
$$
\end{remark}

\medskip

By virtue of Remark \ref{rem1} we may speak about the boundary $\pd\mathbb B$
of the binomial system. This boundary is described in the following theorem:

\begin{theorem}\label{thm2}
The boundary of the binomial projective system $\mathbb B$ is the space\/
$\R_+:=\{x\in\R: x\ge0\}$ with the links $\LB^\infty_r:\R_+\dasharrow\Z_+$
defined by the Poisson distributions
$$
\LB^\infty_r(x, m)=e^{-r x}\frac{(rx)^m}{m!}\,, \qquad x\in\R_+, \quad
m\in\Z_+.
$$
\end{theorem}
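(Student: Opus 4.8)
The plan is to verify the hypotheses of Remark~\ref{rem2} (in the form valid for the index set $\R_{>0}$, as granted by Remark~\ref{rem1}): namely, that the proposed space $X=\R_+$, which is evidently standard, together with the links $\LB^\infty_r$, is compatible with the system, and that the induced map $\M(\R_+)\to\M_\infty$ is a bijection. The compatibility $\LB^\infty_{r'}\LB^{r'}_r=\LB^\infty_r$ is the classical \emph{Poisson thinning} identity: thinning a Poisson$(r'x)$ variable by keeping each point independently with probability $r/r'$ yields a Poisson$(rx)$ variable. I would check it in one line by a direct summation or by generating functions, so this is not where the difficulty lies.

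The heart of the matter is the bijectivity of the map $P\mapsto\{M^P_r\}$, where $M^P_r(m)=\int_{\R_+}\LB^\infty_r(x,m)\,P(dx)$. I would encode everything through probability generating functions. Writing $\Phi_r(s)=\sum_{m\ge0}M_r(m)s^m$ for a coherent family $\{M_r\}\in\M_\infty$, the binomial form of $\LB^{r'}_r$ turns the coherence relation $M_{r'}\LB^{r'}_r=M_r$ into the clean functional equation
$$\Phi_r(s)=\Phi_{r'}\!\left(1-\tfrac{r}{r'}(1-s)\right),\qquad 0<r<r',\ s\in[0,1].$$
The decisive observation is that the substitution $s=1-w/r$ removes the $r$-dependence: setting $\Psi(w):=\Phi_r(1-w/r)$ for any $r\ge w$, the functional equation gives $\Phi_r(1-w/r)=\Phi_{r'}(1-w/r')$, so $\Psi$ is a single well-defined function on $[0,\infty)$, independent of the chosen level $r$, canonically attached to the coherent family $\{M_r\}$.

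It remains to recognize $\Psi$ as a Laplace transform. Since $\Phi_r$ is a power series with nonnegative coefficients, $\Psi(w)=\Phi_r(1-w/r)$ satisfies $(-1)^k\Psi^{(k)}(w)=r^{-k}\Phi_r^{(k)}(1-w/r)\ge0$ for every $w>0$ and $k\ge0$, so $\Psi$ is completely monotone, bounded by $1$, with $\Psi(0^+)=\Phi_r(1)=1$. By Bernstein's theorem $\Psi(w)=\int_{\R_+}e^{-wx}P(dx)$ for a unique probability measure $P$ on $\R_+$. Computing the generating function of $m\mapsto M^P_r(m)=\int_{\R_+}e^{-rx}(rx)^m/m!\;P(dx)$ gives $\int_{\R_+}e^{-rx(1-s)}P(dx)=\Psi(r(1-s))=\Phi_r(s)$, whence $M^P_r=M_r$ for every $r$; this is surjectivity. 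Injectivity is then immediate, since $P$ is recovered from $\{M_r\}$ through its Laplace transform $\Psi$, which uniqueness of Laplace transforms makes unambiguous. Invoking Remark~\ref{rem2} identifies $\R_+$ with $\pd\mathbb B$ and reads off the links. The main obstacle is precisely the analytic step of this last paragraph: one must be sure that $\Psi$, a priori only a continuous decreasing function, is genuinely the Laplace transform of a \emph{probability} measure. The $r$-independence of $\Psi$ together with complete monotonicity is exactly what supplies this; equivalently, one could apply a continuity theorem for Laplace transforms to the rescaled pushforwards of $M_r$ under $m\mapsto m/r$, using that the Poisson distribution $\LB^\infty_r(x,\cdot)$ concentrates $m/r$ near $x$ as $r\to\infty$.
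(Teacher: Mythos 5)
Your proof is correct, but it takes a genuinely different route from the paper's. You reduce everything to generating functions: the coherence relation becomes $\Phi_r(s)=\Phi_{r'}\bigl(1-\tfrac{r}{r'}(1-s)\bigr)$, the substitution $s=1-w/r$ produces a level-independent function $\Psi$, and complete monotonicity plus Bernstein's theorem manufactures the mixing measure $P$ directly; injectivity then falls out of uniqueness of Laplace transforms. The paper instead proves injectivity by a moment-problem argument (Lemma \ref{lemma6}: the moments of $e^{-rx}M(dx)$ are bounded by $m!\,r^{-m}$, so the moment problem is determinate) and proves surjectivity by a compactness/scaling-limit scheme: push $M_{r'}$ forward under $l\mapsto l/r'$, show via Lemma \ref{lemma5} that the kernels $\LB^{r'}_r(\cdot,m)$, read as functions of $x=l/r'$, converge in $C_0(\R_+)$ to the Poisson kernel, extract a vague subsequential limit of the pushforwards, and pass to the limit in the coherence relation to see it is a probability measure with the right images. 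Your argument is shorter and constructs the boundary measure explicitly rather than as a subsequential limit, at the price of invoking Bernstein's theorem; the paper's softer compactness argument is deliberately chosen because the identical scheme is then reused verbatim for the Young graph (Theorem \ref{thm3}) and the Young bouquet (Theorem \ref{thm4}), where no generating-function change of variables is available, and because the moment-problem lemma and its density corollary (Corollary \ref{cor3}) are needed again later. One point to make sure you state cleanly if you write this up: for each fixed $w>0$ you must choose $r>w$ before differentiating, since $\Phi_r^{(k)}(s)\ge 0$ is only guaranteed for $s\ge 0$; the $r$-independence of $\Psi$ is exactly what licenses this, and $\Psi(0^+)=\Phi_r(1)=1$ is what makes the Bernstein measure a probability measure. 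You should also record, as the paper does via Remark \ref{rem1}, why the boundary of a system indexed by $\R_{>0}$ can be computed along a cofinal sequence of levels.
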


Before proceeding to the proof of the theorem we will prove two simple lemmas.

\begin{lemma}\label{lemma5}
Let $r>0$ and $k\in\Z_+$ be fixed. For any $r'>r$, the function
$$
x\mapsto \left(1-\frac r{r'}\right)^{r'x}x^k
$$
belongs to the Banach space $C_0(\R_+)$ of continuous functions on $\R_+$
vanishing at infinity, with the supremum norm. In the limit as parameter $r'$
goes to $+\infty$, this function converges in the metric of $C_0(\R_+)$ to the
function
$$
x\mapsto e^{-rx}x^k.
$$
\end{lemma}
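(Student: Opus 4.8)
The plan is to analyze the function $f_{r'}(x) := (1 - r/r')^{r'x} x^k$ by first rewriting its exponential factor in a more transparent form. Setting $\alpha = \alpha(r') := -\log(1 - r/r') > 0$, I observe that $(1-r/r')^{r'x} = e^{-\alpha r' x}$, so that $f_{r'}(x) = x^k e^{-\beta x}$ with $\beta = \beta(r') := \alpha r' > 0$. In other words, each $f_{r'}$ is, up to the value of the decay rate $\beta$, of exactly the same shape as the limiting function $x \mapsto e^{-rx} x^k$; the whole problem thus reduces to (i) showing each such function lies in $C_0(\R_+)$, and (ii) controlling the supremum norm of the difference as $\beta \to r$.

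First I would dispatch membership in $C_0(\R_+)$. For any fixed $\beta > 0$ the function $x \mapsto x^k e^{-\beta x}$ is manifestly continuous on $\R_+$, and since the exponential decay dominates the polynomial growth it tends to $0$ as $x \to +\infty$; hence it belongs to $C_0(\R_+)$. This applies both to each $f_{r'}$ and to the candidate limit (which is the case $\beta = r$). Next I would verify $\beta(r') \to r$ as $r' \to +\infty$. Writing $u = r/r' \to 0^+$, we have $\beta = r' \cdot (-\log(1-u)) = r \cdot \frac{-\log(1-u)}{u}$, and since $-\log(1-u)/u \to 1$ as $u \to 0^+$, we indeed get $\beta(r') \to r$.

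The remaining, and genuinely quantitative, step is to bound $\| f_{r'} - f_\infty \|_\infty = \sup_{x \ge 0} x^k \bigl| e^{-\beta x} - e^{-r x} \bigr|$ and show it goes to $0$. My plan is to use the elementary estimate $|e^{-\beta x} - e^{-rx}| \le |\beta - r|\, x\, e^{-\min(\beta,r)\,x}$, which follows from the mean value theorem applied to $\beta \mapsto e^{-\beta x}$ (the derivative is $-x e^{-\beta x}$, bounded in absolute value by $x e^{-\min(\beta,r)x}$ on the relevant interval). This yields
\[
\| f_{r'} - f_\infty \|_\infty \le |\beta - r|\, \sup_{x \ge 0} x^{k+1} e^{-\min(\beta,r)\, x}.
\]
For $r'$ large enough, $\beta$ stays in a fixed compact neighborhood of $r$, so $\min(\beta,r) \ge r/2$ say, and the supremum $\sup_{x\ge 0} x^{k+1} e^{-(r/2)x}$ is a finite constant (attained at $x = 2(k+1)/r$). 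Since $|\beta - r| \to 0$ by the previous paragraph, the whole bound tends to $0$, which is precisely convergence in $C_0(\R_+)$.

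The step I expect to be the main obstacle is the uniform control in the last display: one must ensure the supremum of $x^{k+1} e^{-\min(\beta,r)x}$ does not blow up as $r' \to \infty$, and the key point making this work is that the decay rates $\beta(r')$ stay bounded away from $0$ (in fact converge to $r > 0$), so the polynomial factor is always tamed by a genuine exponential with rate uniformly bounded below. Everything else is routine.
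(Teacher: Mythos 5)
Your proof is correct. The reduction $(1-r/r')^{r'x}=e^{-\beta(r')x}$ with $\beta(r')=-r'\log(1-r/r')\to r$ is valid (indeed $\beta(r')>r$, so $\min(\beta,r)=r$, which makes your uniform control of $\sup_x x^{k+1}e^{-\min(\beta,r)x}$ even more immediate than you anticipated), the mean value theorem estimate $|e^{-\beta x}-e^{-rx}|\le|\beta-r|\,x\,e^{-\min(\beta,r)x}$ is sound, and the resulting bound $\|f_{r'}-f_\infty\|_\infty\le|\beta(r')-r|\cdot\sup_{x\ge0}x^{k+1}e^{-(r/2)x}\to0$ settles the convergence in $C_0(\R_+)$.

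Your route differs from the paper's. The paper argues softly in two regions: it notes that convergence is uniform on any bounded interval $[0,a]$, and then controls the tail by observing that $\left(1-\frac r{r'}\right)^{r'}=e^{-r}(1+O(1/r'))$, so the pre-limit functions decay to $0$ at infinity uniformly in $r'\gg r$; combining the two pieces gives uniform convergence on all of $\R_+$ without any explicit error bound. Your argument instead produces a single global estimate via the mean value theorem applied to $t\mapsto e^{-tx}$, which is slightly longer to set up but is fully quantitative: since $\beta(r')-r=O(1/r')$, you actually obtain a convergence rate $\|f_{r'}-f_\infty\|_\infty=O(1/r')$, which the paper's compactness-flavored argument does not record. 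Both proofs are complete; yours buys an explicit rate, the paper's buys brevity.
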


\begin{proof}
Clearly, the convergence holds uniformly on $x$ in any bounded interval $[0,a]$. On the
other hand, it is easy to estimate the tail of the pre-limit function for $x$
near infinity: As $x\to+\infty$, the function tends to $0$ uniformly on $r'\gg
r$, because
$$
\left(1-\frac r{r'}\right)^{r'}=e^{-r}\left(1+O(1/r')\right), \qquad
\textrm{$r'$ large.}
$$
This proves the lemma.
\end{proof}

\begin{lemma}\label{lemma6}
For any $r>0$, the map $M\mapsto M_r:=M\,\LB^\infty_r$ from $\M(\R_+)$ to
$\M(\Z_+)$ is injective.
\end{lemma}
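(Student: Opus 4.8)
The plan is to show that the distribution $M_r$ on $\Z_+$ encodes the Laplace transform of $M$, and then invoke the uniqueness theorem for Laplace transforms. First I would compute the probability generating function of $M_r$. For $z\in[0,1]$, summing over $m$ and interchanging sum and integral (all terms are nonnegative, so Tonelli applies),
$$
\sum_{m\in\Z_+} M_r(m)\,z^m=\int_{\R_+} e^{-rx}\sum_{m\in\Z_+}\frac{(rxz)^m}{m!}\,M(dx)=\int_{\R_+} e^{-rx(1-z)}\,M(dx).
$$
Writing $s=r(1-z)$, the right-hand side is the Laplace transform $\wh M(s):=\int_{\R_+}e^{-sx}\,M(dx)$. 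As $z$ ranges over $[0,1]$, the parameter $s$ ranges over $[0,r]$. Hence the numbers $\{M_r(m)\}_{m\ge0}$ determine, and are determined by, the restriction of $\wh M$ to $[0,r]$.

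Next I would pass from local to global information using analyticity. The Laplace transform of any probability measure on $\R_+$ extends to a function holomorphic on the right half-plane $\{\mathrm{Re}\,s>0\}$; one differentiates under the integral sign, bounding the integrand by $|e^{-sx}|\le1$ there. Since $\wh M$ is holomorphic on this connected open domain and $(0,r]$ is a subset with accumulation points inside it, the identity theorem shows that the values of $\wh M$ on $(0,r]$ already determine $\wh M$ on the entire half-plane, and in particular on the whole positive half-line.

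Finally, by the uniqueness theorem for Laplace transforms of finite measures on $\R_+$, the function $\wh M$ determines $M$. Chaining these steps: if $M_r=M'_r$, then $\wh M=\wh{M'}$ on $[0,r]$, hence everywhere on $(0,\infty)$, hence $M=M'$. This is precisely the asserted injectivity of $M\mapsto M_r$.

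I do not expect a genuine obstacle: the interchange of summation and integration is justified by nonnegativity, and both analytic inputs (holomorphy of $\wh M$ in the half-plane and uniqueness of the Laplace transform) are standard. The only point that deserves care is the passage from knowing $\wh M$ merely on the short interval $[0,r]$ to knowing it globally, which is exactly why I route the argument through holomorphy on the connected half-plane rather than through real-analyticity of $\wh M$ on $(0,\infty)$ alone.
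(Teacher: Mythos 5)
Your proof is correct. It differs from the paper's in how the uniqueness step is packaged, though the underlying analytic mechanism is the same. The paper observes that $M_r(m)=\frac{r^m}{m!}\int_{\R_+}M(dx)e^{-rx}x^m$, so that $M_r$ encodes all moments of the exponentially tilted measure $M(dx)e^{-rx}$; the bound $e^{-rx}x^m\le m!\,r^{-m}$ shows these moments grow slowly enough that the exponential generating function of the moment sequence is analytic in a disc, whence the moment problem for the tilted measure is determinate and $M$ is recovered. You instead resum $M_r$ into its probability generating function, identify it with the Laplace transform $\wh M$ on the interval $[0,r]$, and then propagate this local knowledge to all of $\{\operatorname{Re}s>0\}$ via holomorphy and the identity theorem, finishing with the uniqueness theorem for Laplace transforms. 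The two routes are essentially dual: knowing all $M_r(m)$ is the same data as knowing $\wh M$ on $[0,r]$, and the moment-determinacy criterion the paper invokes is itself usually proved by exactly the analytic-continuation argument you spell out. What your version buys is self-containedness --- you reduce everything to the identity theorem and the standard injectivity of the Laplace transform, rather than citing determinacy of a moment problem as a black box; what the paper's version buys is brevity and a formulation (control of the moments of $M(dx)e^{-rx}$) that it reuses verbatim in the proofs of Corollary \ref{cor3} and Lemma \ref{lemma3}. All the steps you flag (Tonelli for the interchange, holomorphy of $\wh M$ in the open half-plane, accumulation of $(0,r]$ inside the domain) are justified as you state them, so there is no gap.
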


\begin{proof}
Indeed, given $M\in\M(\R_+)$, its image $M_r$ under $\LB^\infty_r$ is given by
$$
M_r(m)=\frac1{m!}\int_{\R_+}M(dx)e^{-rx}(rx)^m, \qquad m\in\Z_+.
$$
The trivial estimate
$$
e^{-rx}\frac{(rx)^m}{m!}\le1, \qquad x\in\R_+,
$$
entails $e^{-rx}x^m\le m!r^{-m}$. Since $M$ is a probability measure, this
implies that the $m$th moment of measure $M(dx)e^{-rx}$ does not exceed
$m!r^{-m}$. It follows that the exponential generating function for the moments
is analytic in the open disc of radius $r$, which guarantees that the
corresponding moment problem is definite. Therefore, the initial measure
$M(dx)e^{-rx}$ is recovered from its moments uniquely, so that $M$ is
determined by $M_r$ uniquely.
\end{proof}

The following corollary will be used in \cite{BO11b}.

\begin{corollary}\label{cor3}
For any fixed $r>0$, the linear span of the functions $e^{-rx}x^m$,
$m=0,1,2,\dots$, is dense in $C_0(\R_+)$.
\end{corollary}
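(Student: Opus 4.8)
The plan is to deduce the density from a standard duality criterion combined with the moment-determinacy argument already used in the proof of Lemma \ref{lemma6}. By the Hahn--Banach theorem, a linear subspace of $C_0(\R_+)$ is dense if and only if every continuous linear functional annihilating it is zero, and by the Riesz representation theorem the continuous linear functionals on $C_0(\R_+)$ are exactly the integrals $f\mapsto\int_{\R_+}f\,d\mu$ against finite signed Borel measures $\mu$ on $\R_+$. Thus it suffices to show that if a finite signed measure $\mu$ satisfies
$$
\int_{\R_+}e^{-rx}x^m\,d\mu(x)=0,\qquad m=0,1,2,\dots,
$$
then $\mu=0$.

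First I would introduce the auxiliary finite signed measure $d\nu(x):=e^{-rx}\,d\mu(x)$, which is finite because $e^{-rx}\le1$ on $\R_+$ and $\mu$ has finite total variation. The hypothesis says precisely that every moment of $\nu$ vanishes, so the whole problem reduces to concluding $\nu=0$; then $\mu=0$ follows at once since $e^{-rx}>0$.

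The key step is a determinacy argument identical in spirit to the one in Lemma \ref{lemma6}. Writing $\nu=\nu_+-\nu_-$ for its Jordan decomposition, both $\nu_+$ and $\nu_-$ are finite positive measures dominated by $e^{-rx}\,d|\mu|(x)$; exactly as in the proof of Lemma \ref{lemma6}, this forces their $m$th moments to be bounded by a constant times $m!\,r^{-m}$, so that the exponential generating function of the moments of each is analytic in the open disc of radius $r$, hence each is determined by its moments. Since the vanishing of all moments of $\nu$ means $\nu_+$ and $\nu_-$ have identical moments, we obtain $\nu_+=\nu_-$, i.e.\ $\nu=0$. Alternatively, and perhaps more transparently for a signed measure, one avoids the Jordan decomposition: the same moment bound shows that the characteristic function $\widehat\nu(\xi)=\int_{\R_+}e^{i\xi x}\,d\nu(x)$ extends to an analytic function on the half-plane $\{\operatorname{Im}\xi>-r\}$, the vanishing of all moments makes its Taylor series at the origin identically zero, so $\widehat\nu$ vanishes on the real interval $(-r,r)$ and hence, by the identity theorem, throughout that connected region and in particular on all of $\R$; uniqueness of the Fourier transform of a finite measure then gives $\nu=0$.

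The only point that genuinely requires care is that the classical determinacy criterion is usually phrased for positive measures, whereas here $\mu$ and $\nu$ are signed; this is exactly why the argument must be routed either through the Jordan decomposition or through the analytic continuation of $\widehat\nu$. I expect this to be the sole real obstacle. Everything else is the standard Hahn--Banach/Riesz packaging together with the moment estimate $e^{-rx}x^m\le m!\,r^{-m}$ already carried out in Lemma \ref{lemma6}, so the write-up should be short.
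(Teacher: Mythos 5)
Your proposal is correct and follows essentially the same route as the paper: duality (Riesz representation of $(C_0(\R_+))^*$ by finite signed measures), reduction to the vanishing of all moments of $e^{-rx}\,d\mu$, Jordan decomposition into two positive measures with equal moments, and the moment-determinacy estimate $e^{-rx}x^m\le m!\,r^{-m}$ from Lemma \ref{lemma6}. The only difference is cosmetic (the paper decomposes $M$ before multiplying by $e^{-rx}$, you decompose afterwards), and your alternative via analytic continuation of the Fourier transform is a valid, slightly more self-contained variant of the same determinacy step.
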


\begin{proof}
The dual space to $C_0(\R_+)$ is the space of finite signed measures on $\R_+$.
Therefore, it suffices to prove that if $M$ is a signed measure such that
$e^{-rx}M$ is orthogonal to all polynomials, then $M=0$. To do this write $M$
as the difference of two finite positive measures $M'$ and $M''$. The
assumption on $M$ means that measures $M'(dx)e^{-rx}$ and $M''(dx)e^{-rx}$ have
the same moments. Then the argument in the proof of Lemma \ref{lemma6} shows
that these measures are equal. Therefore $M'=M''$ and $M=0$.
\end{proof}

\begin{proof}[Proof of Theorem \ref{thm2}]
It is easy to check the relations
\begin{equation}\label{eq30}
\LB^\infty_{r'}\, \LB^{r'}_{r}=\LB^\infty_{r}, \qquad  r'>r.
\end{equation}
They determine a Borel map
\begin{equation*}
\M(\R_+)\to\M_\infty=\varprojlim\M(V_r), \quad M\mapsto\{M_r\}, \quad
M_r:=M\,\LB^\infty_r.
\end{equation*}
By virtue of Remark \ref{rem2} it suffices to prove that this map is a
bijection.

By Lemma \ref{lemma6}, it is injective; even more, $M\mapsto M_r$ is injective
for any fixed $r>0$.

We proceed to the proof that the map $M\mapsto\{M_r\}$ is surjective. Fix an
element $\{M_r:r>0\}$ of the projective limit space $\M_\infty$. Let us show that
it comes from some probability measure $M\in\M(\R_+)$. The idea is that $M$
arises as a scaling limit of the measures $M_{r'}$ as $r'\to+\infty$.

Write the compatibility relation $M_{r'}\,\LB^{r'}_r=M_r$ in the form
\begin{equation}\label{eq14}
\langle M_{r'}, \, \LB^{r'}_r(\,\cdot\,,m)\rangle=M_r(m), \qquad \forall
m\in\Z_+,
\end{equation}
where $\LB^{r'}_r(\,\cdot\,,m)$ is viewed as the function $l\mapsto
\LB^{r'}_r(l,m)$ on $\Z_+$. Fix $r$ and $m$ and let parameter $r'$ go to
$+\infty$. Embed $\Z_+$ into $\R_+$ via the map
$$
\varphi_{r'}:l\mapsto x:=(1/r')l
$$
that depends on $r'$. Denote by $\wt M_{r'}$ the pushforward of $M_{r'}$ under
$\varphi_{r'}$; this is a probability measure on $\R_+$. Next, rewrite the
expression
$$
\LB^{r'}_r(l,m)=\left(1-\frac{r}{r'}\right)^{l-m}\left(\dfrac{r}{r'}\right)^m\,
\dfrac{l!}{(l-m)!m!}
$$
as a function of variable $x:=\varphi_{r'}(l)$:
\begin{equation}\label{eq33}
\LB^{r'}_r(l,m)=\frac{r^m}{m!}\cdot
\left(1-\frac{r}{r'}\right)^{r'x-m}x\left(x-\frac1{r'}\right)\dots\left(x-\frac{m-1}{r'}\right)
\end{equation}
Here $x$ ranges over the grid $\varphi_{r'}(\Z_+)=(1/r')\Z_+\subset\R_+$, but
the expression in the right-hand side of \eqref{eq33} makes sense for all
$x\in\R_+$. By Lemma \ref{lemma5}, this expression, as a function of variable
$x\in\R_+$, belongs to $C_0(\R_+)$ and converges, as parameter $r'$ goes to
$+\infty$, to the function
$$
x\mapsto e^{-rx}\frac{(rx)^m}{m!}=\LB^\infty_r(x,m)
$$
in the metric of $C_0(\R_+)$.

On the other hand, the set of sub-probability measures on $\R_+$ is compact in
the vague topology (the topology of convergence on functions from $C_0(\R_+)$).
Therefore, the family $(\wt M_{r'})$ has a nonempty set of partial vague limits
as $r'\to+\infty$. Choose any such limit $M$. Then we may pass to a limit in
\eqref{eq14}, which gives us
$$
\langle M, \, \LB^\infty_r(\,\cdot\,,m)\rangle=M_r(m), \qquad \forall m\in\Z_+,
\quad \forall r>0,
$$
which in turn implies that $M$ is actually a probability measure. This
concludes the proof of the theorem.
\end{proof}

The following example is used below in Section \ref{sc:zmeasures}.

\begin{example}\label{example1}
Fix parameter $c>0$. For any $r>0$ define a probability measure $\MB^{(c)}_r$
on $\Z_+$ by
$$
\MB^{(c)}_r(m)=(1+r)^{-c}\frac{(c)_m}{m!}\,\left(\frac r{1+r}\right)^m, \qquad
m\in\Z_+,
$$
where $(c)_m:=c(c+1)\dots(c+m-1)$. This is a negative binomial distribution. A
direct check shows that the family $\{\MB^{(c)}_r\}_{r>0}$ is compatible with
the links $\LB^{r'}_r$, so that this family is an element of the projective
limit space $\M_\infty$ associated with the system $\mathbb B$. The
corresponding limit measure on the boundary $\pd\mathbb B=\R_+$ is the gamma
distribution with parameter $c$; it has density $(\Ga(c))^{-1}x^{c-1}e^{-x}$
with respect to the Lebesgue measure.
\end{example}

\subsection{Thoma's simplex, Thoma's cone, and symmetric functions}
The {\it Thoma simplex\/} is the subspace $\Om$ of the infinite product space
$\R_+^\infty\times\R_+^\infty$ formed by all couples $(\al,\be)$, where
$\al=(\al_i)$ and $\be=(\be_i)$ are two infinite sequences such that
\begin{equation}\label{eq20}
\al_1\ge\al_2\ge\dots\ge0, \qquad \be_1\ge\be_2\ge\dots\ge0
\end{equation}
and
\begin{equation}\label{eq21}
\sum_{i=1}^\infty\al_i+\sum_{i=1}^\infty\be_i\le 1.
\end{equation}
We equip $\Om$ with the product topology inherited from
$\R_+^\infty\times\R_+^\infty$. Note that in this topology, $\Om$ is a compact
metrizable space.

The {\it Thoma cone\/} $\wt\Om$ is the subspace of the infinite product space
$\R_+^\infty\times\R_+^\infty\times\R_+$ formed by all triples
$\om=(\al,\be,\de)$, where $\al=(\al_i)$ and $\be=(\be_i)$ are two infinite
sequences and $\de$ is a nonnegative real number, such that the couple
$(\al,\be)$ satisfies \eqref{eq20} and the following modification of the
inequality \eqref{eq21}
$$
\sum_{i=1}^\infty\al_i+\sum_{i=1}^\infty\be_i\le \de.
$$
We set $|\om|=\de$.

Note that $\wt\Om$ is a locally compact space in the product topology inherited
from $\R_+^\infty\times\R_+^\infty\times\R_+$. The space $\wt\Om$ is also
metrizable and has countable base. Every subset of the form $\{\om\in\wt\Om:
|\om|\le\const\}$ is compact. Therefore, a sequence of points $\om_n$ goes to
infinity in $\wt\Om$ if and only if $|\om_n|\to\infty$.

We will identify $\Om$ with the subset of $\wt\Om$ formed by triples
$\om=(\al,\be,\de)$ with $\de=1$. The name ``Thoma cone'' given to $\wt\Om$ is
justified by the fact that $\wt\Om$ may be viewed as the cone with the base
$\Om$: the ray of the cone passing through a base point $(\al,\be)\in\Om$
consists of the triples $\om=(r\al,r\be,r)$, $r\ge0$.

More generally, for $\om=(\al,\be,\de)\in\wt\Om$ and $r>0$ we set
$r\om=(r\al,r\be,r\de)$.

Let $\Sym$ denote the graded algebra of symmetric functions over the base field
$\R$ (see, e.g., \cite{Ma95}, \cite{Sa01}). As an abstract algebra, $\Sym$ is
isomorphic to the polynomial algebra $\R[p_1,p_2,\dots]$, where the generators
$p_k$ are the power sums in formal variables $x_1,x_2,\dots$,
$$
p_k=\sum_{i=1}^\infty x_i^k, \qquad \deg p_k=k.
$$
Here we employ the (conventional) realization of $\Sym$ as the subalgebra in
$\R[[x_1,x_2,\dots]]$ formed by symmetric power series in countably many
variables, of bounded total degree, see \cite{Sa01}.

However, this realization is not used in what follows. Instead, we embed $\Sym$
into the algebra of continuous functions on the Thoma cone by setting
$$
p_k(\om)=\begin{cases}\sum_{i=1}^\infty \al_i^k+ (-1)^{k-1}\sum_{i=1}^\infty
\be_i^k, & k=2,3,\dots\\
|\om|, & k=1,
\end{cases}
$$
where $\om$ ranges over $\wt\Om$.

In more detail, every element $F\in\Sym$ is uniquely written as a polynomial in
$p_1,p_2,\dots$; then we define $F(\om)$ as the same polynomial in numeric
variables $p_1(\om), p_2(\om),\dots$\,. Note that the above expressions with
$k\ge2$ are the super power sums in variables $(\al_i)$ and $(-\be_i)$, see
\cite[\S I.3, Ex. 23]{Ma95}.

Another system of generators in $\Sym$ is provided by the {\it complete
homogeneous symmetric functions} $h_1,h_2,\dots$ whose relation with $p_k$'s
can be conveniently written in the form
$$
H(t)=\exp(P(t)),
$$
where $H(t)=1+\sum_{k\ge 1} h_kt^k$ and $P(t)=\sum_{k\ge 1}p_kt^k/k$ are
suitable generating functions.

Hence, under the embedding of $\Sym$ into $C(\wt\Omega)$ described above, we have
\begin{equation}\label{eq:add2}
1+h_1(\omega)t+h_2(\omega)t^2+\dots=e^{\gamma t} \prod_{i=1}^\infty \frac{1+\beta_i t}{1-\alpha_i t}\,,
\end{equation}
where $\omega=(\alpha,\beta,\delta)$, and $\gamma:=\delta-\sum_{i\ge 1}(\alpha_i+\beta_i)\ge 0$.

A distinguished {\it linear} basis of $\Sym$ is formed by the {\it Schur
functions\/}. We denote them by $S_\mu$, where the index $\mu$ ranges over
$\Y$. The Schur functions are homogeneous elements, $\deg S_\mu=|\mu|$, and
they can be expressed through the complete homogeneous symmetric functions by
the {\it Jacobi-Trudi formula}
$$
S_\mu=\det\bigl[ h_{\mu_i-i+j}\bigr]_{i,j=1}^\ell,
$$
where $\ell=\ell(\mu)$ is the number of nonzero parts of $\mu$, and we assume
that $h_0=1$, $h_{-1}=h_{-2}=\dots=0$. Thus, the functions $S_\mu\in C(\wt\Om)$
are given by
$$
S_\mu(\om)=\det\bigl[ h_{\mu_i-i+j}(\om)\bigr]_{i,j=1}^\ell,
$$
where $h_k(\omega)$ are determined by \eqref{eq:add2}.

\subsection{The Young graph $\Y$}\label{Young}
Consider the group chain \eqref{eq32}, where the $n$th group is the symmetric
group $S(n)$ formed by permutations of the set $\{1,\dots,n\}$. The embedding
$S(n)\subset S(n+1)$ is defined by identifying $S(n)$ with the subgroup of
$S(n+1)$ fixing the point $n+1$. The branching graph associated with this group
chain is called the {\it Young graph\/} and denoted by $\Y$. The vertices of
$\Y$ are the Young diagrams including the empty diagram $\varnothing$ at level
0. The level of a Young diagram $\la$ equals the number of its boxes, and two
diagrams are joined by a (simple) edge if they differ by a single box. This
agrees with general Definition \ref{ex2} by virtue of the {\it Young branching
rule\/} for irreducible representations of symmetric groups (see, e.g.,
\cite[Theorem 2.8.3]{Sa01}).

Young diagrams are usually identified with {\it partitions\/} and written in
the partition notation, $\la=(\la_1,\la_2,\dots)$. Here, by definition, $\la_i$
equals the number of boxes in the $i$th row of $\la$. We set $|\la|=\sum\la_i$;
this is the same as the number of boxes in the diagram $\la$.

The dimension function in the Young graph has a nice combinatorial meaning:
$\dim\la$ coincides with the number of standard tableaux of shape $\la$. For
this quantity there are several nice explicit formulas, e.g., the hook formula
(see \cite[Theorem 3.10.2]{Sa01}).

Consider the projective chain defined by the Young graph:
$$
\Y_0\dashleftarrow\Y_1\dashleftarrow\Y_2\dashleftarrow\cdots
$$
with the links $\LY^{m+1}_m:\Y_{m+1}\dasharrow\Y_m$ defined by (below
$\mu\in\Y_m$ and $\nu\in\Y_{m+1}$)
$$
\LY^{m+1}_m(\nu,\mu)=\begin{cases}\dim\mu/\dim\nu, &\mu\subset\nu,\\
0, &\textrm{otherwise}\end{cases}
$$
(the notation $\mu\subset\nu$ means that $\mu$ is a subdiagram of $\nu$; since
$|\nu|=|\mu|+1$, this is equivalent to saying that $\mu$ is obtained from $\nu$
be removing a box).

More generally, for any $n>m$ the link $\LY^n_m:\Y_n\dasharrow\Y_m$ is defined
as the composition
$$
\LY^n_m=\LY^n_{n-1}\dots\LY^{m+1}_m
$$
and has the form
\begin{equation}\label{eq34}
\LY^n_m(\nu,\mu)=\frac{\dim\mu\cdot\dim(\mu,\nu)}{\dim\nu}, \qquad \nu\in\Y_n,
\quad \mu\in\Y_m,
\end{equation}
where $\dim(\mu,\nu)$ is defined as the number of standard tableaux of skew
shape $\nu/\mu$ if $\mu\subset\nu$, and 0 otherwise.

For a Young diagram $\la$, its {\it modified Frobenius coordinates\/}
$(a_1,\dots,a_d; b_1,\dots,b_d)$ are defined as follows: $d$ is the number of
diagonal boxes in $\la$; $a_i$ is equal to $\frac12$ plus the number of boxes
in the $i$th row, on the right of the $i$th diagonal box; likewise, $b_i$ is
equal to $\frac12$ plus the number of boxes in the $i$th column, below the
$i$th diagonal box. Note that
$$
\sum_{i=1}^d (a_i+b_i)=|\la|.
$$

We embed the set $\Y$ into $\wt\Om$ through the map
$$
\la\mapsto \om_\la:=((a_1,\dots,a_d,0,0,\dots), \, (b_1,\dots,b_d,0,0,\dots),
\, |\la|).
$$
Obviously, $|\om_\la|=|\la|$.

Recall that for any $\mu\in\Y$ we denote by $S_\mu$ the corresponding Schur
symmetric function.

\begin{lemma}\label{lemma1}
In the algebra\/ $\Sym$, there exist elements $\FS_\mu$ indexed by diagrams
$\mu\in\Y$, such that
$$
FS_\mu=S_\mu + \textrm{lower degree terms}
$$
and
\begin{equation}\label{eq13}
l^{\down m}\frac{\dim(\mu,\la)}{\dim\nu}=\FS_\mu(\om_\la), \qquad \la\in\Y,
\quad l=|\la|,
\end{equation}
where
$$
l^{\down m}=l(l-1)\dots(l-m+1).
$$
\end{lemma}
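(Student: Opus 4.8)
The plan is to recognize the left-hand side of \eqref{eq13} as a shifted Schur function evaluated at $\la$, and then to transport this shifted symmetric function of the row coordinates $\la_1,\la_2,\dots$ into an ordinary symmetric function of the modified Frobenius coordinates via the embedding $\la\mapsto\om_\la$. Concretely, writing $m=|\mu|$ and $l=|\la|$ (and reading the denominator as $\dim\la$), I would first invoke the Okounkov--Olshanski dimension formula, which identifies $l^{\down m}\,\dim(\mu,\la)/\dim\la$ with the shifted Schur function $s^*_\mu(\la)$. This is consistent with the two diagnostic checks one should make: on the diagonal $l^{\down m}\dim(\mu,\mu)/\dim\mu=|\mu|!/\dim\mu$ equals the product of hooks of $\mu$ by the hook-length formula, matching the normalization $s^*_\mu(\mu)$; and for $\mu\not\subseteq\la$ both sides vanish. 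In particular the left-hand side is a \emph{shifted symmetric} function of $\la_1,\la_2,\dots$; write $\Sym^*$ for the algebra of all such functions, filtered so that the shifted power sum of degree $k$ has degree $k$.

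Next I would analyze the Frobenius evaluation map $\mathrm{ev}\colon\Sym\to\Fun(\Y)$, $F\mapsto(\la\mapsto F(\om_\la))$. Since $\sum_i(a_i+b_i)=|\la|$, the parameter $\ga$ in \eqref{eq:add2} vanishes at $\om_\la$, so $\sum_{k\ge0}h_k(\om_\la)t^k=\prod_i(1+b_it)/(1-a_it)$; a direct identity between the particle--hole (Frobenius) and row descriptions of $\la$ rewrites this as
\[
\sum_{k\ge0}h_k(\om_\la)\,t^k=\prod_{i\ge1}\frac{1+(i-\tfrac12)t}{1-(\la_i-i+\tfrac12)t}.
\]
Taking logarithms yields the regularized (convergent) expression
\[
p_k(\om_\la)=\sum_{i\ge1}(\la_i-i+\tfrac12)^k+(-1)^{k-1}\sum_{i\ge1}(i-\tfrac12)^k,
\]
which exhibits $p_k(\om_\cdot)$ as the degree-$k$ shifted power sum, an element of $\Sym^*$ whose top homogeneous component is the ordinary power sum $\sum_i\la_i^k$ in the row lengths. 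Because the $p_k$ freely generate $\Sym$ while their top components freely generate the associated graded of $\Sym^*$, the map $\mathrm{ev}$ is a filtered algebra isomorphism of $\Sym$ onto $\Sym^*$ inducing the identity on associated graded algebras.

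Finally, since $s^*_\mu\in\Sym^*$, there is a unique $\FS_\mu\in\Sym$ with $\FS_\mu(\om_\cdot)=s^*_\mu$, which is exactly \eqref{eq13}. For the top term, recall the standard property $s^*_\mu=S_\mu+(\text{lower degree})$, where $S_\mu$ is read as the Schur polynomial in the variables $\la_i$. As $\mathrm{ev}$ identifies top components via $p_k\mapsto\sum_i\la_i^k$, and $S_\mu(\om_\cdot)$ has the same top component (namely $S_\mu$ in the $\la_i$), taking top components of $\FS_\mu(\om_\cdot)=s^*_\mu$ forces $\mathrm{top}(\FS_\mu)=S_\mu$; hence $\FS_\mu=S_\mu+(\text{lower degree terms})$. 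The main obstacle is the middle step: proving that $\mathrm{ev}$ both lands in and surjects onto $\Sym^*$ (equivalently, that the Frobenius power sums generate precisely the shifted symmetric functions). The generating-function identity supplies the computation placing $p_k(\om_\cdot)$ in $\Sym^*$, and surjectivity then follows from the filtered leading-term comparison; everything else is bookkeeping.
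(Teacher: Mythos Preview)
Your proposal is correct and follows precisely the route the paper indicates by its citations: the paper's ``proof'' consists entirely of pointers to \cite{ORV03} and \cite{OO97}, and what you have written is an accurate sketch of their content --- invoking \cite[Theorem~8.1]{OO97} to identify $l^{\down m}\dim(\mu,\la)/\dim\la$ with the shifted Schur function $s^*_\mu(\la)$, and then using the filtered isomorphism $\Sym\to\Sym^*$ given by evaluation at $\om_\la$ (the main construction of \cite{ORV03}) to pull $s^*_\mu$ back to $\FS_\mu\in\Sym$. The generating-function identity you wrote down for $\sum h_k(\om_\la)t^k$ and the resulting formula $p_k(\om_\la)=\sum_{i\ge1}[(\la_i-i+\tfrac12)^k-(-i+\tfrac12)^k]$ are exactly the computations in \cite[\S2]{ORV03} that establish this isomorphism.
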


\begin{proof} See \cite[Section 2]{ORV03}. The result is actually a reformulation of
\cite[Theorem 8.1]{OO97}. The elements $\FS_\mu$ are called the {\it
Frobenius-Schur functions\/}.
\end{proof}

\begin{corollary}\label{cor2}
Fix\/ $m$ and $\mu\in\Y_m$. For large $l$ and $\la\in\Y_l$
$$
\frac{\dim(\mu,\la)}{\dim\nu}=S_\mu(l^{-1}\om_\la)+O(l^{-1}),
$$
where the bound $O(l^{-1})$ for the rest term depends on $m$ and $\mu$ but is
uniform on $\la$.
\end{corollary}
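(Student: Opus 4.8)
The plan is to obtain the corollary as an immediate consequence of Lemma~\ref{lemma1}, exploiting the homogeneity of the Schur functions with respect to the scaling of the Thoma cone; essentially no new computation beyond elementary asymptotics of $l^{\down m}$ is required.

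First I would rewrite \eqref{eq13} as
\[
\frac{\dim(\mu,\la)}{\dim\nu}=\frac{\FS_\mu(\om_\la)}{l^{\down m}},\qquad l=|\la|,
\]
and split $\FS_\mu=S_\mu+R_\mu$, where $S_\mu$ is homogeneous of degree $m=|\mu|$ and $R_\mu$ collects the lower-degree terms, so that $R_\mu$ is a fixed element of $\Sym$ all of whose homogeneous components have degree $\le m-1$.

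The key point is that the embedding $\Sym\hookrightarrow C(\wt\Om)$ intertwines the grading with the cone scaling: since $p_k(r\om)=r^k p_k(\om)$ for every $k\ge 1$ and $r>0$, any homogeneous $F\in\Sym$ of degree $d$ satisfies $F(r\om)=r^d F(\om)$. I would apply this with $r=l^{-1}$. Because $|\om_\la|=l$, the rescaled point $l^{-1}\om_\la$ lies in the \emph{compact} Thoma simplex $\Om\subset\wt\Om$, so $S_\mu(\om_\la)=l^m S_\mu(l^{-1}\om_\la)$ with $S_\mu(l^{-1}\om_\la)=O(1)$ uniformly in $\la$ (here I use that $S_\mu\in C(\wt\Om)$ is bounded on the compact set $\Om$). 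The same scaling applied to each homogeneous component of $R_\mu$ gives $R_\mu(\om_\la)=O(l^{m-1})$, again uniformly in $\la$, the implied constant depending only on $\mu$ through the finitely many sup-norms $\|\cdot\|_{C(\Om)}$ of those components. Hence $\FS_\mu(\om_\la)=l^m S_\mu(l^{-1}\om_\la)+O(l^{m-1})$.

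Finally I would insert the elementary expansion $l^{\down m}=l^m\bigl(1+O(l^{-1})\bigr)$, whence
\[
\frac{\FS_\mu(\om_\la)}{l^{\down m}}=S_\mu(l^{-1}\om_\la)\bigl(1+O(l^{-1})\bigr)+O(l^{-1})=S_\mu(l^{-1}\om_\la)+O(l^{-1}),
\]
using once more $S_\mu(l^{-1}\om_\la)=O(1)$ to absorb the cross term. The only genuinely delicate point is the \emph{uniformity} of the error in $\la$, and this is exactly what the compactness of $\Om$ together with the inclusion $l^{-1}\om_\la\in\Om$ secures: every symmetric function that enters is a fixed bounded continuous function on $\Om$, so all the $O$-bounds hold with constants independent of $\la$ (depending only on $m$ and $\mu$), as claimed.
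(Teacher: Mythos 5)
Your proof is correct and follows essentially the same route as the paper: decompose $\FS_\mu=S_\mu+(\text{lower degree})$ via Lemma~\ref{lemma1}, use homogeneity to pull out powers of $l$, and combine with $l^{\down m}=l^m(1+O(l^{-1}))$. The only cosmetic difference is that you justify the uniform $O(1)$ bounds by compactness of $\Om$ and continuity, whereas the paper records the equivalent estimate $F(\om)=O(|\om|^{\deg F})$ checked on the generators $p_k$; both yield the same uniformity in $\la$.
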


\begin{proof}
Observe that for any homogeneous element $F\in\Sym$, one has
\begin{equation}\label{eq15}
F(\om)=O\left(|\om|^{\deg F}\right),
\end{equation}
where the bound depends only on $F$. Indeed, it suffices to check this for the
generators $p_k$ and then the assertion is immediate from the very definition
of $p_k(\om)$.

By Lemma \ref{lemma1}, the expansion of $\FS_\mu$ on homogeneous components has
the form
$$
\FS_\mu=S_\mu+\sum_{k=0}^{m-1}F_k
$$
where $F_0,\dots,F_{m-1}$ are some homogeneous elements with $\deg F_k=k$;
their explicit form is inessential. Hence,
$$
\FS_\mu(\om_\la)=S_\mu(\om_\la)+\sum_{k=0}^{m-1}F_k(\om_\la) =l^m\left(
S_\mu(l^{-1}\om_\la)+\sum_{k=0}^{m-1}\frac1{l^{m-k}}F_k(l^{-1}\om_\la)\right).
$$

Therefore,
$$
\frac{\dim(\mu,\la)}{\dim\nu}=\dfrac1{l^{\down
m}}\FS_\mu(\om_\la)=\dfrac{l^m}{l^{\down m}}\left(
S_\mu(l^{-1}\om_\la)+\sum_{k=0}^{m-1}\frac1{l^{m-k}}F_k(l^{-1}\om_\la)\right).
$$
Taking into account equality $l=|\om_\la|$ and applying \eqref{eq15} we see
that the asymptotics of this expression is indeed
$S_\mu(l^{-1}\om_\la)+O(l^{-1})$.
\end{proof}

\begin{lemma}\label{lemma2}
For $m=1,2,\dots$ there exist links $\LY^\infty_m:\Om\dasharrow \Y_m$ defined
by
$$
\LY^\infty_m(\om,\mu)=\dim\mu\cdot S_\mu(\om), \qquad \om\in\Om, \quad
\mu\in\Y_m.
$$
They satisfy the compatibility relation
\begin{equation}\label{eq31}
\LY^\infty_{m+1}\LY^{m+1}_m=\LY^\infty_m, \qquad m=1,2,\dots\,.
\end{equation}
\end{lemma}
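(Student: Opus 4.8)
The plan is to verify that each $\LY^\infty_m$ is a genuine link (Markov kernel) $\Om\dasharrow\Y_m$, and then to deduce the compatibility relation \eqref{eq31} from the Pieri rule for Schur functions. Three things must be checked for the link property: nonnegativity of the matrix entries, normalization of each row to $1$, and measurability of each column as a function of $\om$. Measurability is immediate: $S_\mu$ is by construction a polynomial in $p_1,p_2,\dots$, each of which is continuous on $\wt\Om$, so $\om\mapsto\LY^\infty_m(\om,\mu)=\dim\mu\cdot S_\mu(\om)$ is continuous on $\Om$, hence Borel.

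For the normalization I would use the Pieri rule. Since $p_1=S_{(1)}$, multiplication by $S_{(1)}$ in the Schur basis obeys
$$
p_1\cdot S_\mu=\sum_{\nu:\,\mu\subset\nu,\;|\nu|=|\mu|+1}S_\nu .
$$
Iterating this $m$ times starting from $S_\varnothing=1$ and counting the monotone paths $\varnothing\subset\dots\subset\mu$ in $\Y$ (whose number is precisely $\dim\mu$) gives the identity in $\Sym$
$$
p_1^{\,m}=\sum_{\mu:\,|\mu|=m}\dim\mu\cdot S_\mu .
$$
Evaluating at $\om\in\Om$ through the algebra homomorphism $\Sym\to C(\wt\Om)$ and using $p_1(\om)=|\om|=1$ on $\Om$ yields $\sum_{|\mu|=m}\dim\mu\cdot S_\mu(\om)=1$, as required. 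Nonnegativity, $S_\mu(\om)\ge0$ for all $\om\in\Om$, is the one genuinely nontrivial input: it is the classical statement that the specialization of $\Sym$ with generating function \eqref{eq:add2} (with $\ga\ge0$ and $\al_i,\be_i\ge0$) is Schur-positive, equivalently that the points of $\Om$ index the extreme characters of $S(\infty)$ (Thoma \cite{Tho64}; see also Vershik and Kerov \cite{VK90}). I would cite this rather than reprove it.

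For the compatibility relation I would compute the composition directly. By the definition of $\LY^{m+1}_m$, for $\mu\in\Y_m$,
$$
\bigl(\LY^\infty_{m+1}\LY^{m+1}_m\bigr)(\om,\mu)
=\sum_{\nu\in\Y_{m+1}}\dim\nu\cdot S_\nu(\om)\cdot\LY^{m+1}_m(\nu,\mu)
=\dim\mu\sum_{\nu:\,\mu\subset\nu,\;|\nu|=m+1}S_\nu(\om),
$$
since $\LY^{m+1}_m(\nu,\mu)=\dim\mu/\dim\nu$ when $\mu\subset\nu$ and vanishes otherwise, so the factors $\dim\nu$ cancel. Applying the Pieri rule above, the inner sum equals $p_1(\om)\,S_\mu(\om)$, and once more $p_1(\om)=|\om|=1$ reduces it to $S_\mu(\om)$. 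Hence the composition equals $\dim\mu\cdot S_\mu(\om)=\LY^\infty_m(\om,\mu)$, which is \eqref{eq31}.

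The main obstacle is therefore not computational: both the normalization and the compatibility collapse to the single Pieri identity together with the defining constraint $|\om|=1$ on $\Om$. The only serious point is the positivity $S_\mu(\om)\ge0$, where one must invoke the external classification of Schur-positive specializations of $\Sym$; everything else is formal manipulation inside the graded algebra $\Sym$ transported to $C(\Om)$ via the evaluation homomorphism.
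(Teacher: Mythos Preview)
Your proof is correct, but it follows a different route from the paper's. You handle normalization and compatibility by a direct Pieri-rule computation (using $p_1(\om)=1$ on $\Om$), and you import the nonnegativity $S_\mu(\om)\ge0$ as an external fact about Thoma specializations. The paper instead proves all three properties at once by an approximation argument: every $\om\in\Om$ is a limit of points of the form $l^{-1}\om_\la$ with $\la\in\Y_l$, and Corollary~\ref{cor2} gives $\LY^l_m(\la,\mu)=\dim\mu\cdot S_\mu(l^{-1}\om_\la)+O(l^{-1})$; since $S_\mu$ is continuous on $\Om$, nonnegativity, normalization, and compatibility are all inherited in the limit from the corresponding properties of the finite links $\LY^l_m$.

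The trade-off is this: your argument is algebraically cleaner for the identities, but the positivity you cite is exactly the ``easy half'' of Thoma's theorem, and the whole point of this section is to give a self-contained proof of Thoma's theorem (Theorem~\ref{thm3}) via the Vershik--Kerov boundary approach; citing it at the lemma stage makes the paper's overall argument circular. The paper's approximation argument obtains positivity for free (manifestly nonnegative ratios of dimensions pass to the limit) and simultaneously rehearses the exact technique reused in the proof of Theorem~\ref{thm3}.
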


\begin{proof}
The key observation is that any point $\om\in\Om$ can be approximated by an
appropriate sequence of points of the form $l^{-1}\om_\la$, where $l\to\infty$
and $\la\in\Y_l$ varies together with $l$. Fix $m$ and $\mu\in\Y_m$. Since the
function $S_\mu(\om)$ is continuous on $\Om$, the preceding lemma implies that
$\LY^\infty_m(\om,\mu)\ge0$ for any $\om\in\Om$.

The same approximation argument shows that
$$
\sum_{\mu\in\Y_m}\LY^\infty_m(\om,\mu)=1,
$$
because the sum is finite and the similar relation holds for $\LY^l_m$.

Likewise, the limit transition as $l\to\infty$ in
$$
\sum_{\nu\in\Y_{m+1}}\LY^l_{m+1}(\la,\nu)\LY^{m+1}_m(\nu,\mu)=\LY^l_m(\la,\mu),
$$
proves the required compatibility relation.
\end{proof}

\begin{theorem}\label{thm3}
The links $\LY^\infty_m:\Om\dasharrow \Y_m$ introduced above make it possible
to identify the boundary $\pd\Y$  of the Young graph\/ $\Y$ with the Thoma
simplex\/ $\Om$.
\end{theorem}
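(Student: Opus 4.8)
The plan is to deduce the identification from the abstract criterion of Remark \ref{rem2}, applied with $X=\Om$ and $V_m=\Y_m$. The space $\Om$ is compact metrizable, hence a standard Borel space, and Lemma \ref{lemma2} already furnishes the links $\LY^\infty_m:\Om\dasharrow\Y_m$ together with the compatibility relation \eqref{eq31}. So everything reduces to proving that the induced map
$$
\M(\Om)\to\M_\infty=\varprojlim\M(\Y_m),\qquad M\mapsto\{M_m\},\quad M_m(\mu)=\dim\mu\int_\Om S_\mu(\om)\,M(d\om),
$$
is a \emph{bijection}; Remark \ref{rem2} then identifies $\Om$ with $\pd\Y$, which is the assertion of the theorem.

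For injectivity I would argue by Stone--Weierstrass. Since $\{S_\mu\}_{\mu\in\Y}$ is a linear basis of $\Sym$, knowing all the integrals $\int_\Om S_\mu\,dM$ is the same as knowing $\int_\Om F\,dM$ for every $F\in\Sym$. The real subalgebra of $C(\Om)$ that is the image of $\Sym$ contains the constants (as $p_1\equiv1$ on $\Om$) and separates points: from the values $p_k(\om)$, $k\ge2$, one recovers the sequences $(\al_i)$ and $(\be_i)$, for instance by reading off the poles of the generating function $\sum_{k\ge2}p_k(\om)\,t^k=\sum_i \al_i^2t^2/(1-\al_i t)-\sum_i \be_i^2t^2/(1+\be_i t)$ on the positive and negative real axes. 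Hence $\Sym$ is dense in $C(\Om)$, and two probability measures with equal $\Sym$-integrals must coincide.

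For surjectivity I would fix a coherent family $\{M_m\}\in\M_\infty$ and construct its preimage as a scaling limit, in the spirit of the proof of Theorem \ref{thm2}. Push each $M_l\in\M(\Y_l)$ forward under the scaled Frobenius embedding $\la\mapsto l^{-1}\om_\la$ (which lands in $\Om$ since $|\om_\la|=l$) to get $\wt M_l\in\M(\Om)$. As $\Om$ is compact, $\M(\Om)$ is compact in the weak topology, so some subsequence $\wt M_{l_j}$ converges weakly to a limit $M\in\M(\Om)$. To see that $M$ reproduces the family, write the coherency relation $M_l\LY^l_k=M_k$ in coordinates and substitute the uniform asymptotics of Corollary \ref{cor2}:
$$
M_k(\mu)=\dim\mu\sum_{\la\in\Y_l}M_l(\la)\,\frac{\dim(\mu,\la)}{\dim\la}=\dim\mu\left(\int_\Om S_\mu(\om)\,\wt M_l(d\om)+O(l^{-1})\right).
$$
Letting $l=l_j\to+\infty$ and using $S_\mu\in C(\Om)$ to pass the weak limit inside the integral yields $M_k(\mu)=\dim\mu\int_\Om S_\mu\,dM=(M\LY^\infty_k)(\mu)$ for every $k$, so $M$ maps to $\{M_m\}$.

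The main obstacle is this surjectivity step. Two points are delicate: first, that the scaled push-forwards $\wt M_l$ genuinely live on the \emph{compact} space $\Om$, so that relative compactness of $\M(\Om)$ applies and no mass escapes to infinity; and second, that the remainder in Corollary \ref{cor2} is uniform in $\la$, which is exactly what licenses interchanging the large-$l$ limit with the summation over $\Y_l$. Once these are secured, the injectivity via Stone--Weierstrass and the criterion of Remark \ref{rem2} complete the identification $\pd\Y\cong\Om$.
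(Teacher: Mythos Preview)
Your proposal is correct and follows essentially the same route as the paper's proof: apply Remark~\ref{rem2}, get injectivity from Stone--Weierstrass (density of $\Sym$ in $C(\Om)$), and get surjectivity by taking weak subsequential limits of the pushforwards $\wt M_l$ under $\la\mapsto l^{-1}\om_\la$, using the uniform asymptotics of Corollary~\ref{cor2} (the paper cites Lemma~\ref{lemma1}, but uses exactly the $O(l^{-1})$ estimate you invoke). Your version is slightly more explicit on two points the paper leaves implicit: the verification that $\Sym$ separates points of $\Om$, and the observation that $l^{-1}\om_\la$ actually lands in $\Om$ so compactness guarantees a probability-measure limit.
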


\begin{proof}
We use the same argument as in the proof of Theorem \ref{thm2}. To some extent,
the situation is even simpler because $\Om$ is a compact space.

By virtue of Lemma \ref{lemma2} the links $\LY^\infty_m:\Om\dasharrow \Y_m$
define a map
$$
\M(\Om)\to \M_\infty:=\varprojlim\M(\Y_m),
$$
and we have to check that is bijective.

The functions $F(\om)$ on $\Om$ coming from elements $F\in\Sym$ form a real
algebra that contains 1 and separates points. By Stone-Weierstrass' theorem, this
algebra is dense in $C(\Om)$. Hence, every measure $M$ on $\Om$ is
uniquely determined by its pairings $\langle M,F\rangle$. Since the Schur
functions $S_\mu$ form a basis in $\Sym$, $M$ is uniquely determined by its
pairings with the functions $\LY^\infty_m(\,\cdot\,,\mu)$. This proves
injectivity.

To prove surjectivity, fix an element $(M_m)\in\M_\infty$. For each $l$
consider the embedding
$$
\varphi_l:\Y_l\to\Om, \qquad \varphi_l(\la):=l^{-1}\om_\la, \quad \la\in\Y_l.
$$
It takes $M_l$ to a probability measure $\wt M_l$ on $\Om$.

Next, by virtue of Lemma \ref{lemma1}, the compatibility relation
$M_l\LY^l_m=M_m$ can be rewritten as
$$
\dim\mu \, \langle \wt M_l, S_\mu\rangle+O(l^{-1})=M_m(\mu).
$$
Let $M$ stand for any partial weak limit of the sequence $(\wt M_l)$ as
$l\to\infty$. Then the above relation implies
$$
\dim \mu\,\langle M, S_\mu\rangle=M(\mu)
$$
which is equivalent to $M\,\LY^\infty_m=M_m$. This concludes the proof.
\end{proof}

This result is closely related to Thoma's theorem on the characters of the
infinite symmetric group $S(\infty)$. The above proof follows the approach of
the paper Kerov, Okounkov, and Olshanski \cite{KOO98}, which in turn develops
the ideas of Vershik and Kerov \cite{VK81}, \cite{VK90}; see also Kerov'
monograph \cite{Ke03}.

\subsection{The Young bouquet $\YB$}
The set $\Y$ of Young diagrams is a poset with respect to the partial order
defined by inclusion of diagrams. That is, a diagram $\mu$ is smaller than a
diagram $\nu$ if $\mu$ is contained in $\nu$. Equivalently, in the partition
notation, $\mu_i\le\nu_i$ for all $i$, where at least one inequality is strict.
As a poset, $\Y$ is a lattice, and for this reason it is often called the {\it
Young lattice\/}. There is an obvious relation between the order on $\Y$ and
the graph structure.

We are going to define a (partially) continuous analog of the Young lattice
$\Y$.

\begin{definition}
The {\it Young bouquet\/} is the poset $(\YB,<)$ defined as follows.

1) The set $\YB$ is the wedge sum of countably many rays indexed by all Young
diagrams $\mu\in\Y$ (whence the term ``bouquet'', which is a synonym for wedge
sum). More precisely, $\YB$ is obtained from the direct product space
$\Y\times\R_+$ (where $\R_+=[0,+\infty)$) by gluing together all the points
$(\mu,0)$ into a single point, denoted as $(\varnothing,0)$.

2) The partial order in $\YB$ comes from the conventional partial order in the
Young lattice $\Y$ and the conventional order in $\R_+$. That is, an element
$(\mu, r)\in\YB$ is declared to be smaller than another element $(\nu, r')$ if
$r<r'$ and $\mu\subseteq\nu$; then we write $(\mu,r)<(\nu,r')$ or
$(\nu,r')>(\mu,r)$.
\end{definition}

For an element $(\mu,r)\in\YB$ we write $|(\mu,r)|:=r$ and call this number the
{\it level\/} of $(\mu,r)$. Let $\YB_r$ denote the subset of elements of level
$r$. The stratification $\YB=\sqcup_{r\ge0}\YB_r$ is viewed as a continuous
analog of grading. Unless otherwise stated, below we assume $r>0$ and
identify each level set $\YB_r$ with $\Y$.

\begin{definition}
With every couple $r'>r>0$ we associate a matrix $\LYB^{r'}_{r}$ of format
$\Y\times\Y$, with the entries
\begin{align}
\LYB^{r'}_{r}(\nu, \mu)&=\LB^{r'}_r(n,m)\LY^n_m(\nu,\mu)  \label{eq28}\\
&=\left(1-\frac{r}{r'}\right)^{n-m}\left(\dfrac{r}{r'}\right)^m\,
\dfrac{n!}{(n-m)!\,m!}\,\dfrac{\dim\mu\, \dim(\mu,\nu)}{\dim\nu},  \label{eq29}
\end{align}
where $n:=|\nu|$ and $m:=|\mu|$ and the matrices right-hand side of
\eqref{eq28} are defined in \eqref{eq11} and \eqref{eq34}.
\end{definition}

Due to the factor $(n-m)!$ in the denominator and the factor $\dim(\mu,\nu)$ in
the numerator $\LYB^{r'}_{r}(\nu, \mu)$ vanishes unless $(\mu,r)<(\nu,r')$.

{}From \eqref{eq28} one sees that $\LYB^{r'}_{r}$ is a stochastic matrix,
because it is composed from two auxiliary stochastic matrices. In other words,
given $\nu$, the random diagram $\mu$ can be drown in two steps: First, we
choose its size $m$ according to the binomial distribution
$\LB^{r'}_r(n,\,\cdot\,)$ and then $\mu$ is specified inside $\Y_m$ according
to the probabilities from the second stochastic matrix. Thus, $\LYB^{r'}_{r}$
is a link $\Y\dasharrow\Y$.

The new links satisfy the compatibility relation
$$
\LYB^{r''}_{r'}\,\LYB^{r'}_r=\LYB^{r''}_r, \qquad r''>r'>r,
$$
because the auxiliary links satisfy analogous compatibility relations. Thus, we
get a projective system formed by the levels $\YB_r=\Y$, $r>0$, of the Young
bouquet with the links $\LYB^{r'}_r$. By definition, the {\it boundary of the Young
bouquet\/} is the boundary of this projective system. We aim to show that this
boundary is the Thoma cone $\wt\Om$.

Let $(0,0,0)$ denote the origin of the Thoma cone; this is the only point
$\om\in\wt\Om$ with $|\om|=0$. To every $\om\in\wt\Om\setminus\{(0,0,0)\}$ we
assign the point $\wh\om=|\om|^{-1}\om$ in the Thoma simplex $\Om$. The map
$\om\mapsto (|\om|,\,\wh\om)$ is a bijection between $\Om\setminus\{(0,0,0)\}$
and the ``cylinder'' $\R_{>0}\times \Om$.

\begin{definition}
Let $r>0$. For  $\om=(x,\wh\om)\in\wt\Om\setminus\{(0,0,0)\}$ and $\mu\in\Y_m$
we set
\begin{align*}
\LYB^\infty_r(\om,\mu)&=\LB^\infty_r(x, m)\,\LY^\infty_m(\wh\om,\mu)\\
&=e^{-rx}\,\frac{(rx)^m}{m!}\,\dim\mu\cdot S_\mu(\wh\om)\\
&=e^{-r|\om|}\,\frac{r^m}{m!}\,\dim\mu\cdot S_\mu(\om).
\end{align*}
We extend this definition to the origin $\om=(0,0,0)$ by continuity, which
gives
\begin{equation*}
\LYB^\infty_r((0,0,0),\mu)=\begin{cases} 1, & \mu=\varnothing,\\
0, &\mu\ne\varnothing. \end{cases}
\end{equation*}
\end{definition}

\begin{lemma}\label{lemma4}
For every $r>0$, $\LYB^\infty_r$ is a link\/ $\wt\Om\dasharrow\Y$.

{\rm(ii)} The links $\LYB^\infty_r$ satisfy the compatibility relation
$$
\LYB^\infty_{r'}\,\LYB^{r'}_r=\LYB^\infty_r, \qquad r'>r>0.
$$
\end{lemma}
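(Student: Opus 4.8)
The plan is to exploit the product structure shared by both families of links. Each of $\LYB^{r'}_r$ and $\LYB^\infty_r$ factors into a ``binomial'' part that governs the size $|\mu|$ and a ``Young'' part that governs the shape of $\mu$ inside its level $\Y_m$. Since the two building blocks $\mathbb B$ and $\Y$ already come equipped with boundary links obeying the compatibility relations \eqref{eq30} and \eqref{eq31}, the whole lemma should reduce to reassembling these two facts.

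For part (i), nonnegativity of $\LYB^\infty_r(\om,\mu)$ is immediate from the factored form, since both $\LB^\infty_r(x,m)$ and $\LY^\infty_m(\wh\om,\mu)$ are nonnegative. To check that $\LYB^\infty_r(\om,\,\cdot\,)$ sums to $1$, I would group the sum over $\mu\in\Y$ by the size $m=|\mu|$ and sum first inside each level:
$$
\sum_{\mu\in\Y}\LYB^\infty_r(\om,\mu)=\sum_{m\ge0}\LB^\infty_r(x,m)\sum_{\mu\in\Y_m}\LY^\infty_m(\wh\om,\mu)=\sum_{m\ge0}\LB^\infty_r(x,m)=1,
$$
using that $\LY^\infty_m$ is stochastic (Lemma \ref{lemma2}) for the inner sum and that $\LB^\infty_r$ is stochastic (Theorem \ref{thm2}) for the outer one. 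Measurability of $\om\mapsto\LYB^\infty_r(\om,\mu)$ is then read off the third expression in the definition, $e^{-r|\om|}\,\frac{r^m}{m!}\,\dim\mu\cdot S_\mu(\om)$, which is in fact continuous on all of $\wt\Om$ because $S_\mu\in C(\wt\Om)$ and $|\om|$ is continuous; this also shows that the value at the origin is precisely the continuous extension already recorded.

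For part (ii), fix $\mu\in\Y_m$ and $\om=(x,\wh\om)$, and expand the composition, grouping the intermediate diagram $\nu$ by its size $n=|\nu|$ (note that $\LYB^{r'}_r(\nu,\mu)$ forces $n\ge m$):
\begin{align*}
\sum_{\nu\in\Y}\LYB^\infty_{r'}(\om,\nu)\,\LYB^{r'}_r(\nu,\mu)
&=\sum_{n\ge m}\LB^\infty_{r'}(x,n)\,\LB^{r'}_r(n,m)\sum_{\nu\in\Y_n}\LY^\infty_n(\wh\om,\nu)\,\LY^n_m(\nu,\mu)\\
&=\LY^\infty_m(\wh\om,\mu)\sum_{n\ge m}\LB^\infty_{r'}(x,n)\,\LB^{r'}_r(n,m)\\
&=\LY^\infty_m(\wh\om,\mu)\,\LB^\infty_r(x,m)=\LYB^\infty_r(\om,\mu).
\end{align*}
The second equality uses the iterated form $\LY^\infty_n\LY^n_m=\LY^\infty_m$ of \eqref{eq31}, which makes the inner sum independent of $n$ and lets it factor out of the sum over $n$; the third uses the compatibility \eqref{eq30} of the binomial system in the form $\LB^\infty_{r'}\LB^{r'}_r=\LB^\infty_r$.

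I do not expect a genuine obstacle here, since the substantive analysis was already carried out in identifying the boundaries of $\mathbb B$ and $\Y$ separately; morally this lemma is just the assertion that a product of two compatible projective systems is again compatible. The main point requiring care is the two-index bookkeeping (size $n$ versus shape $\nu$) together with the interchange of the double sum over $(n,\nu)$, which is legitimate by Tonelli's theorem because every summand is nonnegative, so no convergence issue arises. The remaining detail is the origin $\om=(0,0,0)$, which I would settle either by continuity of both sides in $\om$ or by a direct check: there $\LYB^\infty_{r'}((0,0,0),\nu)=\delta_{\nu,\varnothing}$ collapses the composition to $\LYB^{r'}_r(\varnothing,\mu)=\delta_{\mu,\varnothing}=\LYB^\infty_r((0,0,0),\mu)$.
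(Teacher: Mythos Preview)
Your proposal is correct and follows essentially the same approach as the paper: both split into the cases $\om=(0,0,0)$ and $\om\ne(0,0,0)$, and in the latter case exploit the factorization $\LYB^\infty_r=\LB^\infty_r\cdot\LY^\infty_m$ and $\LYB^{r'}_r=\LB^{r'}_r\cdot\LY^n_m$ together with the already established compatibilities \eqref{eq30} and \eqref{eq31}. Your write-up is simply more explicit than the paper's, spelling out the double sum over $(n,\nu)$ and the measurability/continuity check that the paper leaves implicit.
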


\begin{proof}
(i) We have to check that $\LYB^\infty_r(\om,\,\cdot\,)$ is a probability
measure on $\Y$ for every $\om\in\wt\Om$. Consider separately the cases
$\om\ne(0,0,0)$ and $\om=(0,0,0)$. In the first case, the claim follows from
the factorization of $\LYB^\infty_r(\om,\mu)$; this quantity is represented as
the probability to select $\mu$ through a 2-step procedure directed by two
probability distributions.
In the second case the claim is obvious, for $\LYB^\infty_r((0,0,0),
\,\cdot\,)$ is the delta measure at $\mu=\varnothing$.

(ii) We have to check that
$$
(\LYB^\infty_{r'}\,\LYB^{r'}_r)(\om,\,\cdot\,)=\LYB^\infty_r(\om,\,\cdot\,),
\qquad r'>r>0,
$$
for any $\om\in\wt\Om$. Consider again the same two case: $|\om|=0$ and
$|\om|>0$. In the first case, both sides are delta measures at
$\varnothing\in\Y$. In the second case we use the factorization property of the
links $\LYB^\infty_r$ and $\LYB^{r'}_r$ and the compatibility relations for the
auxiliary links, see \eqref{eq30} and \eqref{eq31}.
\end{proof}

\begin{lemma}[cf. Lemma \ref{lemma6}]\label{lemma3}
Fix $r>0$ and assume that $M'$ and $M''$ are two finite Borel measures on
$\wt\Om$ such that
\begin{equation}\label{eq:add1}
\int_{\wt\Om}M'(d\om)e^{-r|\om|}F(\om)=\int_{\wt\Om}M''(d\om)e^{-r|\om|}F(\om)
\end{equation}
for all $F\in\Sym$. Then $M'=M''$.
\end{lemma}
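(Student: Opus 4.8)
The plan is to reduce the statement to showing that the finite signed measure $M:=M'-M''$, which satisfies $\int_{\wt\Om}e^{-r|\om|}F(\om)\,M(d\om)=0$ for all $F\in\Sym$, must vanish. The difficulty is that $\wt\Om$ is only locally compact, so one cannot argue directly by Stone--Weierstrass as in the proof of Theorem \ref{thm3}, where the base $\Om$ was compact. Instead I would exploit the cone structure: the map $\om\mapsto(|\om|,\wh\om)$ identifies $\wt\Om\setminus\{(0,0,0)\}$ with $\R_{>0}\times\Om$, whose ``angular'' factor $\Om$ is compact while the ``radial'' factor $\R_{>0}$ is not. The noncompactness lives entirely in the radial direction, and there the weight $e^{-r|\om|}$ (this is why $r$ is fixed) lets me run a moment argument exactly as in Lemma \ref{lemma6}; the angular direction is then handled by the density of Schur functions as in Theorem \ref{thm3}. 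The atom at the origin is recovered at the very end.

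First I would treat the radial direction. Fix $\mu\in\Y$ and set $m=|\mu|$. Since $S_\mu$ is homogeneous of degree $m$, one has $S_\mu(\om)=|\om|^m S_\mu(\wh\om)$, so that for every $j\ge0$ the element $p_1^jS_\mu\in\Sym$ satisfies $(p_1^jS_\mu)(\om)=|\om|^{\,j+m}S_\mu(\wh\om)$. Let $\sigma_\mu$ be the pushforward, under $\om\mapsto|\om|$, of the finite signed measure $S_\mu(\wh\om)\,M(d\om)$ restricted to $\om\ne(0,0,0)$ (there $\wh\om$ is defined and $S_\mu(\wh\om)$ is bounded, $\Om$ being compact). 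Testing the hypothesis against $F=p_1^jS_\mu$ then reads
\[
\int_{\R_{>0}}x^{\,j}\,e^{-rx}x^{m}\,\sigma_\mu(dx)=0,\qquad j\ge0
\]
(for $\mu=\varnothing$ the value $j=0$ must be dropped because of the constant term at the origin, but this is harmless below). Thus the finite signed measure $e^{-rx}x^{m}\sigma_\mu(dx)$ on $\R_{>0}$ has all its moments equal to zero. Splitting it into its positive and negative parts and using the elementary bound $x^{k}e^{-rx}\le k!\,r^{-k}$ (valid for all $k\ge0$, $x\ge0$), one checks that the exponential generating function of the moments of each part is analytic in the disc of radius $r$, so each part is determined by its moments; since the two parts have identical moments they coincide, whence $e^{-rx}x^{m}\sigma_\mu(dx)=0$ and therefore $\sigma_\mu=0$. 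This is the point where the weight $e^{-r|\om|}$ is indispensable, and it is the main obstacle: without it the radial moments could grow too fast and the moment problem would cease to be determinate.

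It remains to combine these vanishings over $\mu$ and to recover the origin. The identity $\sigma_\mu=0$ says precisely that $\int g(|\om|)S_\mu(\wh\om)\,M(d\om)=0$ for every $g\in C_0(\R_{>0})$ and every $\mu\in\Y$. By Stone--Weierstrass the Schur functions span a dense subspace of $C(\Om)$ (as already used in Theorem \ref{thm3}), and the products $g(x)h(y)$ with $g\in C_0(\R_{>0})$, $h\in C(\Om)$ span a dense subspace of $C_0(\R_{>0}\times\Om)$; combining the two, the functions $g(|\om|)S_\mu(\wh\om)$ are dense in $C_0(\wt\Om\setminus\{(0,0,0)\})$. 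As a finite signed Borel measure on a locally compact second countable space is determined by its pairings with $C_0$, this forces the restriction of $M$ to $\wt\Om\setminus\{(0,0,0)\}$ to be zero. Finally, taking $F\equiv1$ gives $\int_{\wt\Om}e^{-r|\om|}M(d\om)=0$; since the integral over $\wt\Om\setminus\{(0,0,0)\}$ already vanishes and $e^{-r|\om|}=1$ at the origin, the remaining atom $M(\{(0,0,0)\})$ must vanish too. Hence $M=0$, i.e. $M'=M''$.
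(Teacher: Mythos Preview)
Your proof is correct and follows the same overall strategy as the paper: exploit the cone decomposition $\wt\Om\setminus\{(0,0,0)\}\cong\R_{>0}\times\Om$, control the radial direction by a determinate-moment argument (using the weight $e^{-r|\om|}$ exactly as in Lemma~\ref{lemma6}), and control the angular direction by density of $\Sym$ in $C(\Om)$.

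The organization differs in one respect worth noting. The paper first shows that the pushforwards of $M'$ and $M''$ under $\om\mapsto|\om|$ coincide, then invokes the disintegration theorem to obtain conditional measures $Q'_x,Q''_x$ on $\Om$, and finally shows $Q'_x=Q''_x$ for $\bar M$-a.e.\ $x$ by testing against $p_1^kG$. You instead work with the single signed measure $M=M'-M''$, push forward each weighted measure $S_\mu(\wh\om)M(d\om)$ separately to get $\sigma_\mu$, kill these one by one with the moment argument, and then appeal to the density of tensor products in $C_0(\R_{>0}\times\Om)$. Your route is slightly more elementary in that it avoids disintegration altogether; the paper's route is a bit more structural in that it explicitly identifies the conditional laws. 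Both arguments handle the atom at the origin by a one-line observation (the paper removes it at the start, you recover it at the end with $F\equiv1$).
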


\begin{proof}
{\it Step\/} 1. Let $\bar M'$ and $\bar M''$ stand for the pushforwards of $M'$
and $M''$ under the projection $\wt\Om\to\R_+$ defined as $\om\mapsto|\om|$. We
claim that $\bar M'=\bar M''$.

Indeed, recall that $p_1(\om)=|\om|$. Taking $F=p_1^k$ we get
$$
\int_{\R_+}\bar M'(dx)e^{-rx}x^k=\int_{\R_+}\bar M''(dx)e^{-rx}x^k, \qquad
\forall k\in\Z_+.
$$
Now the argument of Lemma \ref{lemma6} shows that $\bar M'=\bar M''$.

{\it Step\/} 2. Without loss of generality we may assume that $M'$ and $M''$
have no atom at the origin of the Thoma cone. Indeed, if $M'$ has an atom at
the origin, then $\bar M'$ has an atom of the same mass at the point
$0\in\R_+$. Since $\bar M'=\bar M''$, the measure $M''$ has the same atom as
$M'$, so that we may simply remove it.

{\it Step\/} 3. The previous step allows us to transfer the measures $M'$ and
$M''$ from the cone $\wt\Om$ to the cylinder $\R_{>0}\times\Om$ with
coordinates $(x,\wh\om)$, where $x=|\om|\in\R_{>0}$ and
$\wh\om=|\om|^{-1}\om\in\Om$. Step 1 tells us that the projections of the both
measures on coordinate $x$ are one and the same measure $\bar M:=\bar M'=\bar
M''$ on $\R_{>0}$. Let us disintegrate $\bar M'$ and $\bar M''$ with respect to
$\bar M$ (see e.g. Theorem 8.1 in \cite{Pa67} on the existence of the
conditional distributions). Then we get two families $\{Q'_x\}$ and $\{Q''_x\}$
of probability measures on $\Om$, indexed by points $x\in\R_{>0}$. These
families are defined uniquely, modulo $\bar M$-null sets.

We claim that
\begin{equation}\label{eq12}
\int_\Om Q'_x(d\wh\om) G(\wh\om)=\int_\Om Q''_x(d\wh\om) G(\wh\om)
\end{equation}
for all $G\in\Sym$ and all $x$ outside an appropriate $\bar M$-null set that
does not depend on $G$.

Indeed, since $\Sym$ possesses a countable homogeneous basis (for instance, the
basis of Schur functions) it suffices to prove that \eqref{eq12} holds for any
given homogeneous function $G$ and for all $x$ outside a $\bar M$-null set
possibly dependent on $G$.

Then substitute $F=p_1^k G$ into the initial equality \eqref{eq:add1} and denote by $m$ the
degree of $G$. We get the equality
$$
\int_{\R_+}\bar M(dx)e^{-rx}x^{m+k}\left\{\int_\Om
Q'_x(d\wh\om)G(\wh\om)\right\}=\int_{\R_+}\bar
M(dx)e^{-rx}x^{m+k}\left\{\int_\Om Q''_x(d\wh\om)G(\wh\om)\right\},
$$
which holds for any $k\in\Z_+$. This means equality of moments for two
measures, each of which is is the product of $\bar M(dx)e^{-rx}x^m$ and a
bounded function. The same argument as in step 1 shows that these two measures
are the same, which proves \eqref{eq12}.

{\it Step\/} 4. The functions from $\Sym$ are dense in $C(\Om)$ because they
separate points and the space $\Om$ is compact. Together with \eqref{eq12} this
implies that $Q'_x=Q''_x$ almost everywhere with respect to $\bar M$. We
conclude that $M'=M''$.
\end{proof}

Recall that $\wt\Om$ is a locally compact space. Let $C_0(\wt\Om)$ stand for
the real Banach space of continuous functions on $\wt\Om$ vanishing at
infinity, with the supremum norm.

\begin{corollary}[cf. Corollary \ref{cor3}]\label{cor1}
For any fixed $r>0$, the set of functions of the form $e^{-rx}F$ with $F$
ranging over\/ $\Sym$ is dense in $C_0(\wt\Om)$.
\end{corollary}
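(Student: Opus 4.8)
The plan is to reduce the statement to Lemma \ref{lemma3} via the standard duality criterion for density in a Banach space, exactly as Corollary \ref{cor3} was deduced from the moment-uniqueness argument of Lemma \ref{lemma6}. First I would confirm that the candidate functions actually lie in the target space. For $F\in\Sym$ the function $\om\mapsto e^{-r|\om|}F(\om)$ is continuous on $\wt\Om$, and since a point of $\wt\Om$ escapes to infinity precisely when $|\om|\to\infty$, the polynomial growth bound \eqref{eq15} (applied to the homogeneous components of $F$), giving $F(\om)=O(|\om|^{\deg F})$, combined with the exponential decay of $e^{-r|\om|}$, shows that the product vanishes at infinity. Hence each $e^{-rx}F$ belongs to $C_0(\wt\Om)$, and these functions span a linear subspace $\mathcal S\subset C_0(\wt\Om)$.

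Next I would invoke the Hahn--Banach density criterion: $\mathcal S$ is dense in $C_0(\wt\Om)$ if and only if every continuous linear functional annihilating $\mathcal S$ is zero. Because $\wt\Om$ is locally compact, metrizable, and second countable, the dual of $C_0(\wt\Om)$ is the space of finite signed Borel measures on $\wt\Om$ (Riesz representation). Thus it suffices to show that a finite signed measure $M$ with $\int_{\wt\Om}e^{-r|\om|}F(\om)\,M(d\om)=0$ for all $F\in\Sym$ must vanish.

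Finally I would apply the Jordan decomposition $M=M'-M''$ into two finite positive measures. The annihilation condition then reads precisely
\[
\int_{\wt\Om}M'(d\om)\,e^{-r|\om|}F(\om)=\int_{\wt\Om}M''(d\om)\,e^{-r|\om|}F(\om),\qquad F\in\Sym,
\]
which is exactly hypothesis \eqref{eq:add1} of Lemma \ref{lemma3}. That lemma yields $M'=M''$, i.e.\ $M=0$, completing the argument.

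I do not expect any serious obstacle here, since Lemma \ref{lemma3} already performs all the analytic work (the disintegration over the level $|\om|$ and the moment-uniqueness on each slice). The only point that genuinely requires care is the preliminary verification that $e^{-rx}F\in C_0(\wt\Om)$, which rests on pairing the exponential decay against the polynomial growth estimate \eqref{eq15} together with the characterization of the point at infinity on the Thoma cone; everything after that is a routine Hahn--Banach/Jordan-decomposition reduction.
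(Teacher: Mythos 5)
Your proof is correct and follows essentially the same route as the paper: the paper's proof of Corollary \ref{cor1} simply says to argue as in Corollary \ref{cor3} (dual of $C_0$ is the finite signed measures, Jordan decomposition $M=M'-M''$, then invoke Lemma \ref{lemma3} to get $M'=M''$), which is exactly your reduction. The only addition is your explicit verification via \eqref{eq15} that $e^{-rx}F$ lies in $C_0(\wt\Om)$, a point the paper leaves implicit.
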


\begin{proof}
We argue as in Corollary \ref{cor3}, with appeal to \ref{lemma3} instead of
Lemma \ref{lemma6}.
\end{proof}

\begin{theorem}\label{thm4}
The Thoma cone $\wt\Om$ together with the collection of links
$\LYB^\infty_r:\wt\Om\dasharrow\Y$, $r>0$, is the boundary of the Young
bouquet.
\end{theorem}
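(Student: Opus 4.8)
The plan is to invoke the recognition criterion of Remark \ref{rem2}, extended to the continuous index set $I=\R_{>0}$ by Remark \ref{rem1}: the halfline $\R_{>0}$ is countably generated (take the cofinal sequence $r=1,2,3,\dots$), and $\wt\Om$ is a standard Borel space, being locally compact and second countable. The compatibility relations $\LYB^\infty_{r'}\LYB^{r'}_r=\LYB^\infty_r$ required by Remark \ref{rem2} are exactly Lemma \ref{lemma4}(ii). Hence the whole statement reduces to showing that the induced map
$$
\M(\wt\Om)\to\M_\infty=\varprojlim_r\M(\YB_r), \qquad M\mapsto\{M_r:=M\,\LYB^\infty_r\},
$$
is a bijection.

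Injectivity follows from Lemma \ref{lemma3}. Knowing $M_r(\mu)$ for all $\mu\in\Y$ is the same as knowing the integrals $\int_{\wt\Om}M(d\om)\,e^{-r|\om|}S_\mu(\om)$ for all $\mu$; since the Schur functions form a linear basis of $\Sym$, this determines $\int_{\wt\Om}M(d\om)\,e^{-r|\om|}F(\om)$ for every $F\in\Sym$, and Lemma \ref{lemma3} then pins down $M$ uniquely. In fact this already shows that the single map $M\mapsto M_r$ is injective for each fixed $r>0$.

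Surjectivity is the heart of the matter, and I would prove it by the scaling-limit method of Theorems \ref{thm2} and \ref{thm3}. Fix a coherent family $\{M_r\}\in\M_\infty$ and, for each $r'$, introduce the embedding
$$
\psi_{r'}:\Y\to\wt\Om, \qquad \psi_{r'}(\la)=(r')^{-1}\om_\la,
$$
so that $|\psi_{r'}(\la)|=|\la|/r'$ and $\wh{\psi_{r'}(\la)}=|\la|^{-1}\om_\la$; thus the radial coordinate reproduces the binomial scaling of Theorem \ref{thm2} while the angular coordinate reproduces the Young-graph scaling of Theorem \ref{thm3}. Let $\wt M_{r'}$ be the pushforward of $M_{r'}$ under $\psi_{r'}$, and let $M$ be any partial vague limit of $(\wt M_{r'})$ as $r'\to+\infty$ (such limits exist by vague compactness of sub-probability measures on the locally compact space $\wt\Om$). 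The goal is to pass to the limit in the compatibility relation $M_{r'}\,\LYB^{r'}_r=M_r$, written as $\langle M_{r'},\LYB^{r'}_r(\,\cdot\,,\mu)\rangle=M_r(\mu)$, and obtain $\langle M,\LYB^\infty_r(\,\cdot\,,\mu)\rangle=M_r(\mu)$, i.e. $M\,\LYB^\infty_r=M_r$; summing this over $\mu$ then forces $M(\wt\Om)=1$, so that $M$ is a genuine probability measure inducing $\{M_r\}$, as required.

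The one genuine obstacle is the convergence of the composite kernel. Writing $l=|\la|$, $m=|\mu|$ and $x=l/r'=|\psi_{r'}(\la)|$, one factors $\LYB^{r'}_r(\la,\mu)=\LB^{r'}_r(l,m)\,\LY^l_m(\la,\mu)$ and must show that it converges, along the grids $\psi_{r'}(\Y)$, to $\LYB^\infty_r(\,\cdot\,,\mu)$ in $C_0(\wt\Om)$. The binomial factor is treated exactly as in Theorem \ref{thm2}: via the rewriting \eqref{eq33} and Lemma \ref{lemma5} it extends to a $C_0$ function of $x$ converging to $e^{-rx}(rx)^m/m!$. For the Young factor, Corollary \ref{cor2} gives $\LY^l_m(\la,\mu)=\LY^\infty_m(\wh{\psi_{r'}(\la)},\mu)+O(l^{-1})$. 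The delicate feature is that the error $O(l^{-1})$ degenerates exactly where $l$ is small; but there $x=l/r'$ is small as well, and for $m\ge1$ one has $\LB^{r'}_r(l,m)\le(rx)^m/m!$, so the product of the prefactor with the error is $O(l^{m-1}/(r')^m)=O(1/r')$ uniformly on each slab $\{x\le a\}$, while for $m=0$ the Young factor equals $1$ identically and no error arises. Combined with the uniform decay of both factors as $x\to+\infty$ (the binomial tail from Lemma \ref{lemma5}, the Young factor being bounded by $1$), this yields convergence in $C_0(\wt\Om)$. The passage to the limit is then justified by the standard moving-test-function estimate, and the proof finishes exactly as in Theorem \ref{thm2}.
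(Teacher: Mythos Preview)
Your proof is correct and follows the same overall architecture as the paper: reduce via Remarks~\ref{rem1}--\ref{rem2}, get injectivity from Lemma~\ref{lemma3}, and for surjectivity push $M_{r'}$ forward along $\la\mapsto(1/r')\om_\la$, extract a vague sub-probability limit, and pass to the limit in the compatibility relation once the kernel is shown to converge in $C_0(\wt\Om)$.

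The one place where you diverge from the paper is the kernel-convergence step. You factor $\LYB^{r'}_r=\LB^{r'}_r\cdot\LY^l_m$ and control the binomial and Young pieces separately via Lemma~\ref{lemma5} and Corollary~\ref{cor2}, at the cost of a case split on $m=0$ versus $m\ge1$ and a small-$l$/large-$l$ patching argument. The paper instead substitutes the Frobenius--Schur identity \eqref{eq13} directly, obtaining the closed formula
\[
\LYB^{r'}_r(\la,\mu)=\frac{r^m\dim\mu}{m!}\left(1-\frac{r}{r'}\right)^{r'|\om|-m}
\Bigl(S_\mu(\om)+\sum_{k=0}^{m-1}(r')^{-(m-k)}F_k(\om)\Bigr),\qquad \om=(1/r')\om_\la,
\]
which is manifestly defined on all of $\wt\Om$ and whose $C_0$-convergence follows in one stroke from Lemma~\ref{lemma5} and the bound \eqref{eq15}. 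Both routes rest on the same Frobenius--Schur input (your Corollary~\ref{cor2} is derived from it), so the difference is organizational: the paper's single formula avoids your case analysis and gives an explicit continuous extension, while your factored version makes the parallel with Theorems~\ref{thm2} and \ref{thm3} more transparent.
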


\begin{proof}
We follow the scheme of the proof of Theorem \ref{thm2}. By virtue of Lemma
\ref{lemma4}, the links $\LYB^\infty_r$ define a Borel map $M\mapsto
(M_r)_{r>0}$ from $\M(\wt\Om)$ to the projective limit space constructed from
the system $\{V_r=\Y, \LYB^{r'}_r\}$. According to Remark \ref{rem2}, it
suffices to prove that this map is a bijection. We divide this claim into two
parts, injectivity and surjectivity.

The injectivity claim follows from Lemma \ref{lemma3} or Corollary \ref{cor1},
which say that even the map $M\mapsto M_r$ with any fixed $r>0$ is injective.

We proceed to the proof of the surjectivity claim. Write the compatibility
relation $M_{r'}\LYB^{r'}_r=M_r$ in the form
$$
\langle M_{r'}, \, \LYB^{r'}_r(\,\cdot\,,\mu)\rangle=M_r(\mu), \qquad \forall
\mu\in\Y,
$$
where $\LYB^{r'}_r(\,\cdot\,,\mu)$ is viewed as the function $\la\mapsto
\LYB^{r'}_r(\la,\mu)$ on $\Y$. Fix $r$ and $\mu$ and let parameter $r'$ go to
$+\infty$. Embed $\Y$ into $\wt\Om$ via the map
$$
\varphi_{r'}:\la\mapsto (1/r')\om_\la
$$
that depends on $r'$. Denote by $\wt M_{r'}$ the pushforward of $M_{r'}$ under
$\varphi_{r'}$; this is a probability measure on $\wt\Om$. Next, regard
$\LYB^{r'}_r(\la,\mu)$ as a function of variable $\om:=\varphi_{r'}(\la)$. Of
course, this function is initially defined only on the discrete subset
$\varphi_{r'}(\Y)\subset\wt\Om$, but we will see that it admits a natural
extension to a continuous function on the whole $\wt\Om$ depending also on
parameter $r'$. The key fact proved below is that the latter function lies in
the Banach space $C_0(\wt\Om)$ and converges, as $r'\to\infty$, to the function
$\om\mapsto\LYB^\infty(\om,\mu)$ in the metric of that space. Once this is
established, the desired result follows. Indeed, take as $M$ an arbitrary
partial limit of $\{\wt M_{r'}, r'\to+\infty\}$ with respect to the vague
topology; this is a sub-probability measure on $\wt\Om$. Then we may pass to
the limit in the above equation, which gives us
$$
\langle M, \, \LYB^\infty_r(\,\cdot\,,\mu)\rangle=M_r(\mu), \qquad \forall
\mu\in\Y, \quad \forall r>0,
$$
which in turn implies that $M$ is a probability measure. This concludes the
proof modulo the claim concerning the  function $\LYB^{r'}_r(\,\cdot\,,\mu)$
and its convergence to $\LYB^\infty_r(\,\cdot\,,\mu)$ in the metric of
$C_0(\wt\Om)$.

Now let us prove that claim. Write again the explicit expression for
$\LYB^{r'}_r(\la,\mu)$:
$$
\LYB^{r'}_r(\la,\mu)=\left(1-\frac{r}{r'}\right)^{l-m}\left(\dfrac{r}{r'}\right)^m\,
\dfrac{l!}{(l-m)!\,m!}\,\dfrac{\dim\mu\, \dim(\mu,\la)}{\dim\la},
$$
where, as usual, $l=|\la|$ and $m=|\mu|$. By virtue of \eqref{eq13},
$$
\dfrac{l!}{(l-m)!}\,\dfrac{\dim\mu\,
\dim(\mu,\la)}{\dim\la}=\FS_\mu(\om_\la)=S_\mu(\om_\la)+\sum_{k=0}^{m-1}F_k(\om_\la),
$$
where $F_0,\dots,F_{m-1}$ are the same homogeneous elements of $\Sym$ with
$\deg F_k=k$ as in the proof of Corollary \ref{cor2}.

Setting $\om:=(1/r')\om_\la$ (and keeping in mind that $\om$ depends both on
$\la$ and $r'$) we may rewrite the above equality as
$$
\frac1{(r')^m}\dfrac{l!}{(l-m)!}\,\dfrac{\dim\mu\,
\dim(\mu,\la)}{\dim\la}=S_\mu(\om)+\sum_{k=0}^{m-1}\frac1{(r')^{m-k}}F_k(\om).
$$
Now, returning to $\LYB^{r'}_r(\la,\mu)$, we may write it as
$$
\LYB^{r'}_r(\la,\mu)=\frac{r^m\dim\mu}{m!}\cdot\left(1-\frac{r}{r'}\right)^{l-m}
\left(S_\mu(\om)+\sum_{k=0}^{m-1}\frac1{(r')^{m-k}}F_k(\om)\right).
$$
Since $l=|\om_\la|=r'|\om|$, we finally get a nice formula
$$
\LYB^{r'}_r(\la,\mu)=\frac{r^m\dim\mu}{m!}\cdot\left(1-\frac{r}{r'}\right)^{r'|\om|-m}
\left(S_\mu(\om)+\sum_{k=0}^{m-1}\frac1{(r')^{m-k}}F_k(\om)\right).
$$

In this formula $\om$ is assumed to be related to $\la$ via relation
$\om=(1/r')\om_\la$ but the right-hand side is well defined as a function on
the whole space $\wt\Om$. We have to prove that this function is continuous,
vanishes at infinity, and in the limit as $r'\to\infty$ converges to
$$
\LYB^\infty_r(\om,\mu)=\frac{r^m\dim\mu}{m!}\cdot e^{-r|\om|} S_\mu(\om)
$$
in the metric of the Banach space $C_0(\wt\Om)$. But this follows from Lemma
\ref{lemma5} by virtue of the bound \eqref{eq15}.
\end{proof}

\subsection{Z-Measures on $\YB$}\label{sc:zmeasures}
Introduce some notation. For $z\in\C$ and $\mu\in\Y$, set
\begin{equation*}
(z)_{\mu}=\prod_{(i,j)\in\mu}(z+j-i),
\end{equation*}
where the product is taken over the boxes $(i,j)$ of diagram $\mu$ (here $i$
are $j$ stand for the row and column numbers of the box). This is a
generalization of the Pochhammer symbol: In the particular case when $\mu=(m)$
is a one-row diagram, we get $(z)_\mu=(z)_m=z(z+1)\dots(z+m-1)$.

\begin{definition}\label{def1}
Let us say that a couple $(z,z')\in\C^2$ of complex parameters is {\it
admissible\/} if $z\ne0$, $z'\ne0$, and  $(z)_\mu(z')_\mu\ge0$ for all
$\mu\in\Y$.
\end{definition}

Obviously, the set of admissible values is invariant under symmetries
$(z,z')\to(z',z)$ and $(z,z)\to(-z,-z')$; the latter holds because
$(-z)_\mu=(-1)^{|\mu|}(z)_{\mu'}$.

It is not difficult to get an explicit description of the admissible range of
the parameters $(z,z')$, see \cite[Proposition 1.2]{BO06}. One can represent it
as the union of the following three subsets or {\it series\/}:

\begin{itemize}

\item The {\it principal series\/} is $\{(z,z')\colon z'=\bar
z\in\C\setminus\Z\}$.

\item The {\it complementary series\/} is $\cup_{k\in\Z}\{(z,z')\colon
k<z,z'<k+1\}$.

\item The {\it degenerate series\/} comprises the set
$$
\{(z,z')=(k,k+b-1)\colon k=1,2,\dots;\, b>0\}
$$
together with its images under the symmetry group $\Z_2\times\Z_2$.

\end{itemize}

The reason why  the values $z=0$ and $z'=0$ are excluded is that then
$(z)_\mu(z')_\mu$ vanishes for all $\mu\ne\varnothing$, which is a trivial
case. Note that $zz'>0$ for any admissible couple $\z$.

\begin{definition}
The {\it z-measure\/} with admissible parameters $\z$ and additional parameter
$r>0$ is the measure on $\Y$ given by
$$
\MYB^\z_r(\mu)=(1+r)^{-zz'}(z)_\mu(z')_\mu\left(\frac
r{1+r}\right)^{|\mu|}\left(\frac{\dim\mu}{|\mu|!}\right)^2, \qquad \mu\in\Y.
$$
\end{definition}

\begin{proposition}
The z-measures are probability measures, and they are compatible with the links
$\LYB^{r'}_r:\Y\dasharrow\Y${\rm:}
$$
\MYB^\z_{r'}\,\LYB^{r'}_r= \MYB^\z_r, \qquad r'>r>0.
$$
\end{proposition}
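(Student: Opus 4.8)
The plan is to exploit the very same two-step factorization that defines the link $\LYB^{r'}_r$ in \eqref{eq28}. The first step is to observe that the z-measure factorizes in an identical manner: writing $m:=|\mu|$ and $c:=zz'$, one has
\begin{equation*}
\MYB^\z_r(\mu)=\MB^{(c)}_r(m)\cdot\MY^\z_m(\mu),
\end{equation*}
where $\MB^{(c)}_r$ is the negative binomial distribution of Example \ref{example1} and $\MY^\z_m$ denotes the level-$m$ \emph{z-measure on the Young graph},
\begin{equation*}
\MY^\z_m(\mu):=\frac{(z)_\mu(z')_\mu}{(zz')_m}\,\frac{(\dim\mu)^2}{m!}\,,\qquad \mu\in\Y_m.
\end{equation*}
This identity is a one-line check: the prefactor $(zz')_m=(c)_m$ in the denominator of $\MY^\z_m$ cancels the corresponding factor in $\MB^{(c)}_r(m)$, leaving exactly the defining formula for $\MYB^\z_r(\mu)$. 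Nonnegativity of each term is then immediate from admissibility (Definition \ref{def1}), which gives $(z)_\mu(z')_\mu\ge0$, all remaining factors being manifestly positive for $r>0$.

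Granting this factorization, I would reduce both assertions to two independent facts: first, the compatibility of the negative binomial family with the binomial links, $\MB^{(c)}_{r'}\LB^{r'}_r=\MB^{(c)}_r$ (and that each $\MB^{(c)}_r$ is a probability measure), which is supplied by Example \ref{example1}; and second, the \emph{coherency} of the Young-graph z-measures, namely $\MY^\z_n\LY^n_m=\MY^\z_m$. Taking $m=0$ in the latter already forces $\sum_{\mu\in\Y_n}\MY^\z_n(\mu)=1$ (since $\LY^n_0(\nu,\varnothing)\equiv1$ and $\MY^\z_0(\varnothing)=1$), so each $\MY^\z_m$ is a probability measure; summing over the stratification $\Y=\bigsqcup_m\Y_m$,
\begin{equation*}
\sum_{\mu\in\Y}\MYB^\z_r(\mu)=\sum_{m\ge0}\MB^{(c)}_r(m)\sum_{\mu\in\Y_m}\MY^\z_m(\mu)=\sum_{m\ge0}\MB^{(c)}_r(m)=1,
\end{equation*}
which settles the first assertion.

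For the compatibility relation I would fix $\mu$ with $m=|\mu|$, substitute the factorizations of both $\MYB^\z_{r'}$ and $\LYB^{r'}_r$, and group the sum over $\nu$ according to its level $n=|\nu|$:
\begin{equation*}
\sum_{\nu\in\Y}\MYB^\z_{r'}(\nu)\,\LYB^{r'}_r(\nu,\mu)=\sum_{n\ge m}\MB^{(c)}_{r'}(n)\,\LB^{r'}_r(n,m)\sum_{\nu\in\Y_n}\MY^\z_n(\nu)\,\LY^n_m(\nu,\mu).
\end{equation*}
The inner sum collapses to $\MY^\z_m(\mu)$ by coherency, and being independent of $n$ it factors out; the remaining sum $\sum_{n\ge m}\MB^{(c)}_{r'}(n)\LB^{r'}_r(n,m)=\MB^{(c)}_r(m)$ is exactly the binomial compatibility of Example \ref{example1}. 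Reassembling through the factorization of $\MYB^\z_r$ yields $\MB^{(c)}_r(m)\MY^\z_m(\mu)=\MYB^\z_r(\mu)$, as required.

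The genuine obstacle is thus the Young-graph coherency $\MY^\z_n\LY^n_m=\MY^\z_m$; everything else is bookkeeping with Example \ref{example1}. By composing one-step links it suffices to treat $n=m+1$, where $\dim(\mu,\nu)=1$ for each $\nu=\mu\cup\square$ obtained by adding a single box $\square$. A short computation, using $(z)_\nu=(z)_\mu\,(z+c(\square))$, then reduces the claim to the content-weighted branching identity
\begin{equation*}
\sum_{\square}\bigl(z+c(\square)\bigr)\bigl(z'+c(\square)\bigr)\,\dim(\mu\cup\square)=(zz'+m)(m+1)\dim\mu,
\end{equation*}
the sum running over the addable boxes $\square$ of $\mu$ and $c(\square)=j-i$ being the content of $\square=(i,j)$. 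Viewing both sides as polynomials in $z$ and $z'$, this is equivalent to the three classical identities $\sum_\square\dim(\mu\cup\square)=(m+1)\dim\mu$, $\sum_\square c(\square)\dim(\mu\cup\square)=0$, and $\sum_\square c(\square)^2\dim(\mu\cup\square)=m(m+1)\dim\mu$; alternatively, one may simply invoke the known coherency of the z-measures from the harmonic analysis of $S(\infty)$.
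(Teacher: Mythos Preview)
Your proof is correct and follows exactly the paper's approach: factor $\MYB^\z_r(\mu)=\MB^{(c)}_r(|\mu|)\cdot\MY^\z_{|\mu|}(\mu)$ and reduce both claims to Example~\ref{example1} together with the coherency of the Young-graph z-measures $\{\MY^\z_m\}$. The paper simply cites the latter coherency from the literature (\cite{Ols03a}, \cite{BO00b}), whereas you go further and sketch its reduction to the three classical content identities for addable boxes; otherwise the arguments are identical.
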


\begin{proof}
Set $c=zz'$ and $m=|\mu|$. The measure $\MYB^\z_r$ can be written in the form
$$
\MYB^\z_r(\mu)=\MB^{(c)}_r(m)\,\MY^\z_m(\mu),
$$
where the first factor in the right-hand side has been defined in Example
\ref{example1} and the second factor is defined by
$$
\MY^\z_m(\mu)=\frac{(z)_\mu(z')_\mu}{(c)_m}\,\frac{(\dim\mu)^2}{m!}.
$$
It is known that for each $m\in\Z_+$, $\MY^\z_m$ is a probability measure on
$\Y_m$ and the family $\{\MY^\z_m\}_{m\in\Z_+}$ is compatible with the links
$\LY^n_m:\Y_n\dasharrow Y_m$, see \cite{Ols03a}, \cite{BO00b}. Together with
Example \ref{example1} this implies the proposition.
\end{proof}

The z-measures play a key role in harmonic analysis on the infinite symmetric
group, see the survey \cite{Ols03b} and references therein.

\section{Connection with the Gelfand--Tsetlin graph}\label{sect4}

\subsection{The Gelfand--Tsetlin graph $\GT$}\label{GT}
For $N=1,2,\dots$ define a {\it signature\/} of length $N$ as an $N$-tuple of
nonincreasing integers $\mu=(\mu_1\ge\dots\ge \mu_N)\in\Z^N$, and denote by
$\GT_N$ the set of all such signatures. Elements of $\GT_N$ parameterize
irreducible representations of the compact unitary group $U(N)$ (``signature''
is another name for ``highest weight'' in the special case of the group $U(N)$,
see, e.g., \cite{Wey39}, \cite{Zhe70}.) We will also use for elements
$\mu\in\GT_N$ a more detailed notation $[\mu,N]$.

Write  $[\mu,N]\prec[\nu,N+1]$ if $\nu_j\ge \mu_j\ge\nu_{j+1}$ for all
meaningful values of indices. These inequalities are well-known to be
equivalent to the condition that the restriction of the $\nu$-representation of
$U(N+1)$ to $U(N)$ contains a $\mu$-component (then the multiplicity of this
component equals 1).

\begin{definition}
Set $\GT=\bigsqcup_{N\ge 1} \GT_N$, and equip $\GT$ with edges that join any
two signatures $\mu$ and $\nu$ such that $\mu\prec\nu$ or $\nu\prec\mu$. This
turns $\GT$ into a graph that is called the {\it Gelfand-Tsetlin graph\/}. It
will be denoted by the same symbol $\GT$.
\end{definition}

By the very definition, $\GT$ is a branching graph with countable levels. It
arises from the chain $U(1)\subset U(2)\subset\cdots$ of compact unitary groups
just as the Young graph arises from the chain of symmetric groups $S(1)\subset
S(2)\subset\cdots$. As in the Young graph $\Y$, all edges in $\GT$ are simple;
this is because the restriction of an irreducible representation of $U(N+1)$ to
the subgroup $U(N)$ is always multiplicity free.

The dimension function in $\GT$ will be denoted by the symbol $\Dim$. We have
$$
\Dim[\mu,N]=\prod_{1\le i<j\le N}\frac{\mu_i-\mu_j-i+j}{j-i}.
$$
This is classical Weyl's formula for the dimension of irreducible
representations of the unitary groups.

More generally, for $N'>N$ we write $\Dim([\mu,N],[\nu,N'])$ for the relative
dimension. According to the general definition \eqref{eq22}, the links between
various levels of $\GT$ have the form
\begin{equation}\label{eq23}
\LGT^{N'}_N([\nu,N'],[\mu,N])=\frac{\Dim[\mu,N]\,\Dim([\mu,N],[\nu,N'])}{\Dim[\nu,N']}.
\end{equation}

\subsection{The boundary of the Gelfand-Tsetlin graph} Consider the space
$\wt\Om\times\wt\Om$, the direct product of two copies of the Thoma cone. Its elements
are pairs $(\omega^+,\om^-)$, where $\om^\pm=(\al^\pm,\be^\pm,\delta^\pm)\in\wt\Om$.
It is convenient to introduce auxiliary parameters
$$
\gamma^\pm:=\delta^\pm-\sum_{i\ge 1} (\alpha_i^\pm+\beta_i^\pm)\ge 0.
$$

To any pair $(\om^+,\om^-)$ we assign a function on the unit circle $\{u\in\C:\,|u|=1\}$
by
$$
\Phi(u;\omega^+,\om^-)=e^{\ga^+(u-1)+\ga^-(u^{-1}-1)}\prod_{i\ge 1} \frac{1+\be_i^+(u-1)}{1-\al_i^+(u-1)}\,
\frac{1+\be_i^-(u^{-1}-1)}{1-\al_i^-(u^{-1}-1)}\,.
$$
This function is analytic in an open neighborhood of the unit circle, where it can be
written as a Laurent series
$$
\Phi(u;\om^+,\om^-)=\sum_{n=-\infty}^\infty \varphi_n(\om^+,\om^-)u^n.
$$

For $\mu\in\GT_N$ set
\begin{equation}\label{eq:add3}
\LGT^\infty_N(\om^+,\om^-;\mu)=\Dim[\mu,N]\cdot \det\bigl[\varphi_{\mu_i-i+j}(\om^+,\om^-)\bigr]_{i,j=1}^N.
\end{equation}

\begin{theorem}\label{th:dGT}
The boundary $\partial \GT$ of the Gelfand-Tsetlin graph can be identified
with the subset in $\wt\Om\times\wt\Om$ determined by the condition $\beta_1^++\beta_1^-\le 1$,
with links $\LGT^\infty_N:\partial \GT\to\GT_N$ given by \eqref{eq:add3}.
\end{theorem}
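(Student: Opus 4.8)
The plan is to imitate the proofs of Theorems \ref{thm2}, \ref{thm3}, and \ref{thm4}, using the criterion of Remark \ref{rem2}. Write $\Om^{\GT}:=\{(\om^+,\om^-)\in\wt\Om\times\wt\Om:\beta_1^++\beta_1^-\le1\}$. It suffices to verify that each $\LGT^\infty_N(\om^+,\om^-;\,\cdot\,)$ is a probability measure on $\GT_N$, that the $\LGT^\infty_N$ satisfy $\LGT^\infty_{N+1}\LGT^{N+1}_N=\LGT^\infty_N$, and that the induced map $\M(\Om^{\GT})\to\M_\infty=\varprojlim\M(\GT_N)$ is a bijection. The unifying tool for the first three points is the determinantal expansion of the multiplicative function built from $\Phi$ in the irreducible characters of $U(N)$: with $s_\mu$ the rational (Laurent) Schur function equal to the $U(N)$-character of signature $\mu$, pulling $\prod_{k=1}^N\Phi(u_k)$ into the Weyl numerator and applying the Cauchy--Binet formula gives $\prod_{k=1}^N\Phi(u_k;\om^+,\om^-)=\sum_{\mu\in\GT_N}\det[\varphi_{\mu_i-i+j}]_{i,j=1}^N\,s_\mu(u_1,\dots,u_N)$.

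Normalization of $\LGT^\infty_N(\om;\,\cdot\,)$ then follows by evaluating this expansion at $u_1=\dots=u_N=1$: since $s_\mu(1,\dots,1)=\Dim[\mu,N]$ and $\Phi(1;\om^+,\om^-)=1$, one gets $\sum_\mu\LGT^\infty_N(\om;\mu)=\Phi(1)^N=1$. Nonnegativity of the minors $\det[\varphi_{\mu_i-i+j}]$ is exactly the statement that the doubly infinite Toeplitz matrix $[\varphi_{j-i}]$ is totally positive; this is Edrei's theorem, and the constraint $\beta_1^++\beta_1^-\le1$ is exactly the range in which $\Phi$ is such a generating function, so I would cite the classification \cite{Ed52}, \cite{Vo76}, \cite{VK90}. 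The compatibility relation reduces, after inserting \eqref{eq23} (where $\Dim([\mu,N],[\nu,N+1])=1$ for $\mu\prec\nu$) and \eqref{eq:add3}, to the branching identity $\sum_{\nu:\,\mu\prec\nu}\det[\varphi_{\nu_i-i+j}]_{i,j=1}^{N+1}=\det[\varphi_{\mu_i-i+j}]_{i,j=1}^N$; this in turn comes from specializing $u_{N+1}=1$ in the expansion above and using $s_\nu(u_1,\dots,u_N,1)=\sum_{\mu\prec\nu}s_\mu(u_1,\dots,u_N)$, the Gelfand--Tsetlin branching rule. These steps are the exact analogs of \eqref{eq31} and its use for $\Y$.

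Injectivity of $\M(\Om^{\GT})\to\M_\infty$ I would deduce from point separation: taking $N=1$ and $\mu=(n)$ in \eqref{eq:add3} shows that each coefficient $\varphi_n$ is itself one of the link functions $\LGT^\infty_1(\,\cdot\,;(n))$, and the family $(\varphi_n)$ recovers the Laurent series $\Phi$, hence the whole parameter $(\om^+,\om^-)$ by the Edrei--Voiculescu bijection. A Stone--Weierstrass argument then shows that the algebra generated by the link functions is dense, so a measure on $\Om^{\GT}$ is determined by its images $M\LGT^\infty_N$; since $\Om^{\GT}$ is only locally compact, this density should be taken in $C_0(\Om^{\GT})$, exactly as in Corollary \ref{cor1}.

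The main obstacle is surjectivity, which requires the scaling-limit analysis absent from the algebraic steps above. Given a coherent family $(M_N)\in\M_\infty$, I would embed $\GT_N$ into $\wt\Om\times\wt\Om$ by sending a signature to the $1/N$-scaled modified Frobenius coordinates of its positive and negative parts $\mu^+,\mu^-$, push $M_N$ forward to a probability measure $\wt M_N$ on $\Om^{\GT}$, and pass to a vague partial limit $M$. The crux is to show that the rescaled link $\LGT^N_{N_0}([\,\cdot\,,N],[\mu,N_0])$, viewed through this embedding, lies in $C_0(\wt\Om\times\wt\Om)$ and converges uniformly to $\LGT^\infty_{N_0}(\,\cdot\,;\mu)$ as $N\to\infty$; passing to the limit in $\langle M_N,\LGT^N_{N_0}(\,\cdot\,,\mu)\rangle=M_{N_0}(\mu)$ then yields $M\LGT^\infty_{N_0}=M_{N_0}$ and forces $M$ to be a probability measure. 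This uniform convergence is the genuinely hard analytic input: it is the asymptotics of normalized irreducible characters of $U(N)$ (equivalently of the relative dimensions $\Dim([\mu,N_0],[\,\cdot\,,N])/\Dim[\,\cdot\,,N]$) as the signature approaches the boundary, and it is far more delicate than the elementary polynomial estimate of Lemma \ref{lemma5} that sufficed for $\YB$. I would supply it by invoking the established asymptotic analysis of the Gelfand--Tsetlin boundary \cite{Vo76}, \cite{BO05a} rather than reproving it, which completes the verification via Remark \ref{rem2}.
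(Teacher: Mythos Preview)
The paper does not actually prove this theorem: immediately after the statement it simply cites the literature (\cite{BO11a}, \cite{Boy83}, \cite{Ed53}, \cite{OO98}, \cite{VK82}) and calls it the Edrei--Voiculescu theorem. So there is no proof in the paper to compare against; your sketch already goes further than the paper does.

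Your outline is structurally sound and mirrors the pattern of Theorems \ref{thm2}--\ref{thm4}: the character expansion of $\prod\Phi(u_k)$ gives normalization and compatibility, Edrei's classification gives nonnegativity and the injectivity of $(\om^+,\om^-)\mapsto(\varphi_n)$ under the constraint $\beta_1^++\beta_1^-\le1$, and surjectivity is isolated as the genuine analytic content. You are right that the latter amounts to uniform asymptotics of normalized $U(N)$-characters (the analogue of Corollary \ref{cor2} for $\GT$), and that this is substantially harder than the polynomial estimates used for $\Y$ or $\YB$; the references you propose to invoke (\cite{OO98}, \cite{VK82}, \cite{BO11a}) are exactly the ones the paper points to. One technical point to tighten: your injectivity step claims density of the algebra generated by the link functions in $C_0(\Om^{\GT})$, but it is not immediate that the individual $\varphi_n(\,\cdot\,)$ lie in $C_0$ (unlike the $\YB$ case there is no built-in exponential damping factor), so a Stone--Weierstrass argument on the noncompact space $\Om^{\GT}$ needs either a different separating family or a reduction to compacts. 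This is a secondary issue, however, since the heavy lifting is in surjectivity, and there your plan coincides with the paper's own stance of deferring to the cited sources.
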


For the history and various proofs of this deep result (which we propose to
call the {\it Edrei--Voiculescu theorem\/}), see Borodin and Olshanski
\cite{BO11a}, Boyer \cite{Boy83}, Edrei \cite{Ed53}, Okounkov and Olshanski
\cite{OO98},  Vershik and Kerov \cite{VK82}.

Note that $\partial \GT$ is a closed subset in $\wt\Om\times\wt\Om$, thus it is a locally
compact space.

If one replaces the condition $\be_1^++\be_1^-\le 1$ by the weaker one of
$\be_1^{\pm}\le 1$ then \eqref{eq:add3} would still define boundary points, but
each boundary point would correspond to multiple pairs $(\om^+,\om^-)$.

\subsection{The subgraph $\GT^+\subset\GT$}
A signature $\mu\in\GT_N$ is said to be {\it nonnegative\/} if all its
coordinates $\mu_1,\dots,\mu_N$ are nonnegative. Of course, it suffices to
require $\mu_N\ge0$. The nonnegative signatures span a subgraph
$\GT^+=\bigsqcup_{N\ge1}\GT^+_N$ in $\GT$. In what follows we will be concerned
exclusively with this subgraph.

Note that a nonnegative signature may be viewed as a Young diagram. More
precisely, given a Young diagram $\mu\in \Y$ and a positive integer $N$, the
signature $[\mu,N]$ is well defined if and only if $\ell(\mu)$, the number of
nonzero rows in $\mu$, does not exceed $N$.

Let $\mu$ and $\nu$ be two Young diagrams with $\ell(\mu)\le N$ and
$\ell(\nu)\le N+1$, so that vertices $[\mu,N]$ and $[\nu,N+1]$ in $\GT^+$ are
well defined. Then these vertices are joined by an edge, that is,
$[\mu,N]\prec[\nu,N+1]$ if and only if $\mu\subseteq\nu$ and the skew diagram
$\nu/\mu$ is a {\it horizontal strip\/}, meaning that $\nu/\mu$ has at most one
box in each column.

This implies, in particular, that if $[\mu, N]$ and $[\nu,N']$ are in $\GT^+$
and $N'>N$, then $\Dim([\mu,N],[\nu,N'])$ vanishes unless $\mu\subseteq\nu$.

Given Theorem \ref{th:dGT}, it is not hard to see that the boundary $\partial
\GT^+$ can be identified with the subset of $\partial\GT$ determined by
$\om^-=(0,0,0)$.

\subsection{Degeneration $\GT^+\to\YB$}
The next theorem says that the projective system corresponding to the Young
bouquet $\YB$ can be obtained from the projective system corresponding to the
Gelfand--Tsetlin graph (or rather its part $\GT^+$) via a scaling limit
transition turning the discrete scale of levels numbered by $1,2,\dots$ into a
continuous one parametrized by $\R_{>0}$.

Note that the links $\LYB^{r'}_r$ depend on parameters $r'>r$ only through
their ratio $r'/r$.

\begin{theorem}\label{thm5}
Fix arbitrary positive numbers $r'>r>0$ and arbitrary two Young diagrams $\mu$
and $\nu$ such that $\mu\subseteq\nu$. Let two positive integers $N'>N$ go to
infinity in such a way that $N'/N\to r'/r$. Then
\begin{equation}\label{eq19}
\lim\LGT^{N'}_N([\nu,N'],[\mu,N])=\LYB^{r'}_r(\nu,\mu).
\end{equation}
\end{theorem}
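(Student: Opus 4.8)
The plan is to evaluate the three factors of
$$
\LGT^{N'}_N([\nu,N'],[\mu,N])=\frac{\Dim[\mu,N]\,\Dim([\mu,N],[\nu,N'])}{\Dim[\nu,N']}
$$
asymptotically and then pass to the ratio. Write $m=|\mu|$, $n=|\nu|$ and $k=N'-N$, and note that in our regime $N,N',k\to\infty$ (the last because $r'>r$) with $N/N'\to r/r'$ and $k/N'\to 1-r/r'$. First I would handle the two absolute dimensions. For a fixed Young diagram $\la$ the Weyl formula can be written in hook--content form $\Dim[\la,M]=\prod_{(i,j)\in\la}\frac{M+j-i}{h(i,j)}$, where $h(i,j)$ are the hook lengths; since $\prod_{(i,j)}h(i,j)=|\la|!/\dim\la$ and the contents $j-i$ are bounded, letting $M\to\infty$ gives
$$
\Dim[\la,M]=\frac{\dim\la}{|\la|!}\,M^{|\la|}\bigl(1+O(M^{-1})\bigr).
$$
Applying this with $(\la,M)=(\mu,N)$ and $(\la,M)=(\nu,N')$ disposes of the first numerator factor and of the denominator.

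Next I would identify the relative dimension combinatorially. A monotone path in $\GT^+$ from $[\mu,N]$ to $[\nu,N']$ (valid once $N\ge\ell(\mu)$) is a chain of $k$ interlacing nonnegative signatures, which by the horizontal-strip criterion recalled above is exactly a semistandard skew tableau of shape $\nu/\mu$ with entries in $\{1,\dots,k\}$. Hence $\Dim([\mu,N],[\nu,N'])=s_{\nu/\mu}(1^k)$, the skew Schur polynomial in $k$ variables specialized at $1$, which depends only on $k$ and is a polynomial in $k$ of degree $n-m$. Expanding $s_{\nu/\mu}=\sum_\la c^\nu_{\mu\la}s_\la$ in Littlewood--Richardson coefficients, applying the Step~1 asymptotics to each $s_\la(1^k)$, and using $\sum_\la c^\nu_{\mu\la}\dim\la=\dim(\mu,\nu)$, I obtain
$$
s_{\nu/\mu}(1^k)=\frac{\dim(\mu,\nu)}{(n-m)!}\,k^{\,n-m}\bigl(1+O(k^{-1})\bigr).
$$

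Finally I would assemble. Multiplying the three asymptotics, the numerical prefactor collects into $\frac{n!}{m!\,(n-m)!}\cdot\frac{\dim\mu\,\dim(\mu,\nu)}{\dim\nu}$, while the monomial part is
$$
\frac{N^m(N'-N)^{n-m}}{(N')^n}=\Bigl(\tfrac{N}{N'}\Bigr)^m\Bigl(1-\tfrac{N}{N'}\Bigr)^{n-m}\longrightarrow\Bigl(\tfrac{r}{r'}\Bigr)^m\Bigl(1-\tfrac{r}{r'}\Bigr)^{n-m}.
$$
Since numerator and denominator are polynomials of the same total degree $n$, all the $O(\cdot)$ corrections vanish in the limit and the ratio converges to the product of the two limits above, which is precisely the right-hand side of \eqref{eq29}, i.e.\ $\LYB^{r'}_r(\nu,\mu)$.

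The \emph{main obstacle}, and really the only step that is not bookkeeping, is Step~2: the combinatorial identification $\Dim([\mu,N],[\nu,N'])=s_{\nu/\mu}(1^{N'-N})$ and the extraction of its top coefficient $\dim(\mu,\nu)/(n-m)!$. Everything else reduces to the elementary hook--content asymptotics of Step~1 and the degree count ensuring that only leading coefficients matter. I would double-check that $k=N'-N\to\infty$ throughout (guaranteed by $r'>r$), so that the leading-order estimate of $s_{\nu/\mu}(1^k)$ is legitimately applicable.
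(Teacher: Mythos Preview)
Your argument is correct and follows essentially the same route as the paper: both identify the three dimensions as $S_\mu(1^N)$, $S_\nu(1^{N'})$, and $S_{\nu/\mu}(1^{N'-N})$ and extract their leading terms in the appropriate large-$N$ regimes before recombining. The only cosmetic difference is that the paper derives the leading coefficient of $F(1^N)$ uniformly as $(F,p_1^{\deg F})/(\deg F)!$ via the power-sum expansion (so that $(S_{\nu/\mu},p_1^{n-m})=\dim(\mu,\nu)$ falls out directly), whereas you obtain it from the hook--content formula for straight shapes and then, for $S_{\nu/\mu}$, reduce to straight shapes via the Littlewood--Richardson expansion together with $\sum_\la c^\nu_{\mu\la}\dim\la=\dim(\mu,\nu)$.
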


\begin{proof}
The idea is to express all the dimensions entering the left- and right-hand
sides through Schur functions and their specializations.

In what follows the brackets $(\,\cdot\,,\,\cdot\,)$ denote the canonical inner product
in $\Sym$; with respect to this product, the Schur functions form an
orthonormal basis. By $(1^N)$ we denote the $N$-tuple $(1,\dots,1)$. We set
$m=|\mu|$ and $n=|\nu|$. Denote by $S_{\nu/\mu}$ the skew Schur function indexed by
the skew diagram $\nu/\mu$, see Section I.5 in \cite{Ma95}.

Here are the necessary formulas:
\begin{gather}
\Dim[\mu,N]=S_\mu(1^N), \qquad \Dim([\mu,N],[\nu,N'])=S_{\nu/\mu}(1^{N'-N})\label{eq16}\\
\dim\mu=(p_1^m,S_\mu), \qquad \dim(\mu,\nu)=(p_1^{n-m},S_{\nu/\mu}).\label{eq17}
\end{gather}

Both in \eqref{eq16} and \eqref{eq17} the first equality is a particular case
of the second one. The first relation in \eqref{eq16} follows from the fact
that the irreducible characters of the unitary groups are given by the Schur
polynomials, and the second equation follows from the combinatorial formula for
the skew Schur functions, see e.g. \cite[I(5.12)]{Ma95}. As for \eqref{eq17},
we first note that $\dim(\mu,\nu)=(S_\mu p_1^{n-m},S_\nu)$ by the simplest
instance of the Pieri rule \cite[I(5.16)]{Ma95}. Then the equality $(S_\mu
p_1^{n-m},S_\nu)=(p_1^{n-m},S_{\nu/\mu})$ follows from \cite[Chapter I,
(5.1)]{Ma95}.

Observe that $p_k(1^N)=N$ for all $k=1,2,\dots$\,. Therefore, if $F$ is a
monomial in $p_1,p_2,\dots$, then $F(1^N)$ equals $N$ raised to the number of letters
in $F$. This number is strictly less than $\deg F$ unless $F$ is a power of
$p_1$. It follows that if $F\in\Sym$ is a homogeneous element, then for large
$N$
$$
F(1^N)=[F: p_1^d]\,N^d+O(N^{d-1}), \qquad d:=\deg F,
$$
where $[F:p_1^d]$ denotes the coefficient of $p_1^d$ in the expansion of $F$ on
monomials in $p_1,p_2,\dots$\,. Next, since the monomials in $p_1,p_2,\dots$
form an orthogonal basis, we have
$$
[F:p_1^d]=\frac{(F,p_1^d)}{(p_1^d,p_1^d)} =\frac{(F,p_1^d)}{d!}
$$
and finally
\begin{equation}\label{eq18}
F(1^N)=\frac{(F,p_1^d)}{d!}\,N^d+O(N^{d-1}), \qquad d:=\deg F.
\end{equation}

Now we proceed to the proof of \eqref{eq19}. By virtue of \eqref{eq23} and
\eqref{eq16}
\begin{align}
\LGT^{N'}_N([\nu,N'],[\mu,N])&=\frac{\Dim[\mu,N]\Dim([\mu,N],[\nu,N'])}{\Dim[\nu,N']}\notag\\
&=\frac{S_\mu(1^N)S_{\nu/\mu}(1^{N'-N})}{S_\nu(1^{N'})}\label{eq24}
\end{align}
Applying \eqref{eq18} to the ordinary and skew Schur functions entering
\eqref{eq24} we get
\begin{align}
S_\mu(1^N)&=\frac{(S_\mu,p_1^m)}{m!}N^m+O(N^{m-1})\notag\\
&=\left(\frac r{r'}\right)^m\frac{(S_\mu,p_1^m)}{m!}(N')^m+O((N')^{m-1})  \label{eq25}\\
S_\nu(1^{N'})&=\frac{(S_\nu,p_1^n)}{n!}(N')^n+O((N')^{n-1})  \label{eq26}\\
S_{\nu/\mu}(1^{N'-N})&=\frac{(S_{\nu/\mu},p_1^{n-m})}{(n-m)!}(N'-N)^{n-m}+O((N'-N)^{n-m-1})\notag\\
&=\left(1-\frac
r{r'}\right)^{n-m}\frac{(S_{\nu/\mu},p_1^{n-m})}{(n-m)!}(N')^{n-m}+O((N')^{n-m-1}).
\label{eq27}
\end{align}

Plugging \eqref{eq25}, \eqref{eq26}, and \eqref{eq27} into \eqref{eq24} we
get
$$
\left(1-\frac r{r'}\right)^{n-m} \left(\frac r{r'}\right)^m
\frac{n!}{(n-m)!\,m!}\,
\frac{(S_\mu,p_1^m)(S_{\nu/\mu},p_1^{n-m})}{(S_\nu,p_1^n)} +O(1/N').
$$
Applying \eqref{eq17} we may rewrite this as
$$
\left(1-\frac r{r'}\right)^{n-m} \left(\frac r{r'}\right)^m
\frac{n!}{(n-m)!\,m!}\, \frac{\dim\mu\,\dim(\mu,\nu)}{\dim\nu} +O(1/N').
$$
Comparing with \eqref{eq29} we see that this is exactly the right-hand side of
\eqref{eq19}, within $O(1/N')$.
\end{proof}

\subsection{Degeneration of the boundary} From Theorem \ref{thm5} it is natural to expect
that there should exist a limit procedure that turns $\partial \GT^+$ into
$\partial\YB$, and our closest goal is to exhibit this procedure.

Each point $\om\in\partial\YB=\wt\Om$ defines a coherent system of measures
$\{{}^\YB M_r\}_{r>0}$ on the levels $\YB_r=\Y$ of $\YB$. Similarly, each point
$(\om^+,\om^-)\in\partial \GT$ defines a coherent system of measures $\{{}^\GT
M_N\}_{N\ge 1}$ on the levels $\GT_N$ of $\GT$. We are about to show that the
former family of coherent systems can be obtained from the latter one by taking
$\om^-=\underline{0}=(0,0,0)$ (since we want to start from $\partial\GT^+$
rather than from $\partial\GT$) and appropriate $\omega^+=\omega^+(\epsilon)$
depending on a small parameter $\epsilon>0$.

As before, we identify nonnegative signatures and Young diagrams.

\begin{theorem}\label{thm7}
 Fix $\om\in\wt\Om$. For any $\mu\in\Y$ and any $r>0$, the following
limiting relation holds: If $N(\epsilon)\sim r\epsilon^{-1}$ as $\epsilon\to +0$
then
$$
\lim_{\epsilon\to +0} \LGT^\infty_{N(\epsilon)} (\epsilon\om,\underline{0};\mu)=
\LYB^\infty_r(\omega,\mu).
$$
\end{theorem}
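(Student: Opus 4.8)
The plan is to insert the special arguments $\om^-=\underline{0}$ and $\om^+=\epsilon\om$ into the determinantal formula \eqref{eq:add3} and to extract the leading behaviour as $\epsilon\to+0$ while $N=N(\epsilon)\sim r\epsilon^{-1}\to\infty$. Write $m=|\mu|$ and $\ell=\ell(\mu)$, and let $H(t;\om)=1+\sum_{k\ge1}h_k(\om)t^k=e^{\ga t}\prod_{i\ge1}\frac{1+\be_i t}{1-\al_i t}$ be the generating function of \eqref{eq:add2}. Setting $\om^-=\underline{0}$ kills the second half of $\Phi$, and the scaling $\om^+=\epsilon\om$ turns the first half into $\Phi(u;\epsilon\om,\underline{0})=H(\epsilon(u-1);\om)$. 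Since $(u-1)^k$ has only nonnegative powers of $u$, this immediately gives $\varphi_n(\epsilon\om,\underline{0})=0$ for $n<0$, while expanding $H(\epsilon(u-1);\om)=\sum_{k\ge0}h_k(\om)\epsilon^k(u-1)^k$ in powers of $u$ yields, for $n\ge0$,
\begin{equation*}
\varphi_n(\epsilon\om,\underline{0})=\epsilon^n h_n(\om)+O(\epsilon^{n+1}),\qquad \varphi_0(\epsilon\om,\underline{0})=H(-\epsilon;\om).
\end{equation*}
A short computation with $\log H(-\epsilon;\om)=-\ga\epsilon+\sum_i[\log(1-\be_i\epsilon)-\log(1+\al_i\epsilon)]$ and the convergence of $\sum_i(\al_i+\be_i)$ gives $\log\varphi_0(\epsilon\om,\underline{0})=-\epsilon|\om|+O(\epsilon^2)$, since $|\om|=\ga+\sum_i(\al_i+\be_i)$.

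The key structural observation is that the $N\times N$ matrix $A=[\varphi_{\mu_i-i+j}(\epsilon\om,\underline{0})]_{i,j=1}^N$ is block upper triangular. Indeed, for a row $i>\ell$ one has $\mu_i=0$, so the entry in column $j\le\ell$ has index $\mu_i-i+j=j-i<0$ and therefore vanishes. Thus $A=\left(\begin{smallmatrix}B&C\\0&D\end{smallmatrix}\right)$, where $B=[\varphi_{\mu_i-i+j}]_{i,j=1}^\ell$ and $D=[\varphi_{j-i}]_{i,j=\ell+1}^N$, whence $\det A=\det B\cdot\det D$. The matrix $D$ is Toeplitz and itself upper triangular with all diagonal entries equal to $\varphi_0(\epsilon\om,\underline{0})$, so $\det D=\varphi_0(\epsilon\om,\underline{0})^{\,N-\ell}$.

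It remains to pass to the limit in each factor. For the fixed-size block $B$, factoring $\epsilon^{\mu_i-i}$ out of row $i$ and $\epsilon^j$ out of column $j$ and using the first display gives $\det B=\epsilon^{m}\bigl(\det[h_{\mu_i-i+j}(\om)]_{i,j=1}^\ell+O(\epsilon)\bigr)=\epsilon^m\bigl(S_\mu(\om)+O(\epsilon)\bigr)$ by the Jacobi--Trudi formula. For $\det D$ I would use the compound-limit estimate $\det D=\exp\bigl((N-\ell)\log\varphi_0(\epsilon\om,\underline{0})\bigr)=\exp\bigl((N-\ell)(-\epsilon|\om|+O(\epsilon^2))\bigr)\to e^{-r|\om|}$, which is exactly where the hypothesis $N(\epsilon)\sim r\epsilon^{-1}$ enters. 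Finally, $\Dim[\mu,N]=S_\mu(1^N)=\tfrac{\dim\mu}{m!}N^m+O(N^{m-1})$ by \eqref{eq17} and \eqref{eq18}, so $\Dim[\mu,N(\epsilon)]\sim\tfrac{\dim\mu}{m!}r^m\epsilon^{-m}$. Multiplying the three asymptotics, the factors $\epsilon^{-m}$ and $\epsilon^{m}$ cancel and one is left with $\tfrac{\dim\mu\,r^m}{m!}e^{-r|\om|}S_\mu(\om)=\LYB^\infty_r(\om,\mu)$, as desired.

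The main obstacle is precisely the coupled limit: the determinant has growing size $N\to\infty$ while its entries degenerate as $\epsilon\to0$, so one cannot simply take $N$ fixed. The block-triangular decomposition resolves this by isolating a \emph{fixed-size} block $B$, which supplies the Schur function $S_\mu$, from the large block $D$, which through the $(1-\epsilon|\om|+O(\epsilon^2))^{r/\epsilon}$-type limit of its diagonal supplies the exponential factor $e^{-r|\om|}$; the only quantitative input beyond the elementary expansions is the second-order estimate for $\varphi_0(\epsilon\om,\underline{0})$.
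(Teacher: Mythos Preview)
Your proof is correct and follows essentially the same route as the paper's: both exploit the vanishing of $\varphi_n$ for $n<0$ to reduce the $N\times N$ determinant in \eqref{eq:add3} to $\det[\varphi_{\mu_i-i+j}]_{i,j=1}^\ell\cdot\varphi_0^{\,N-\ell}$, extract $S_\mu(\om)\epsilon^m$ from the fixed-size block via Jacobi--Trudi and $e^{-r|\om|}$ from $\varphi_0^{\,N-\ell}$ using $N\epsilon\to r$, and then combine with $\Dim[\mu,N]\sim(\dim\mu/m!)\,r^m\epsilon^{-m}$. Your identification $\Phi(u;\epsilon\om,\underline{0})=H(\epsilon(u-1);\om)$ is in fact a bit cleaner than the paper's intermediate rewriting via $\wt\alpha_i,\wt\beta_i$, but the content is identical.
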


\begin{proof} In the special case $\omega^-=\underline{0}$, the function
$u\mapsto \Phi(u;\omega^+,\om^-)=\Phi(u;\omega^+,\underline{0})$ is not just
holomorphic in an neighborhood of the unit circle $|u|=1$, but in a
neighborhood of the unit disc $|u|\le 1$. Indeed, all the factors that involve
$\alpha^-_i,\beta^-_i,\gamma^-$ disappear, and the Laurent series turns into a
Taylor series. Thus, all the coefficients $\varphi_n$ with $n<0$ vanish.

This reduces the $N\times N$ determinant in \eqref{eq:add3} to a determinant
of size $\ell=\ell(\mu)$ that does not depend on $N$:
\begin{equation}\label{eq:add4}
\det\bigl[\phi_{\mu_i-i+j}(\om^+,\underline{0})\bigr]_{i,j=1}^N=
\det\bigl[\phi_{\mu_i-i+j}(\om^+,\underline{0})\bigr]_{i,j=1}^\ell \cdot
\bigl(\varphi_0(\omega^+,\underline{0})\bigr)^{N-\ell}.
\end{equation}
This follows from the fact that the $(i,j)$-entry of the matrix in the
left-hand side of \eqref{eq:add3} vanishes for $i>j>\ell$.

Let us now rewrite the expression for $\Phi(u;\omega^+,\underline{0})$ assuming that
$\be_1^+<1$ (this will be automatically satisfied for $\om^+=\epsilon\om$ with small $\epsilon$).
We have
$$
\Phi(u;\omega^+,\underline{0})=e^{\gamma^+(u-1)}\prod_{i=1}^\infty \frac{1+\be^+_i(u-1)}{1-\al^+_i(u-1)}
=e^{-\ga^+}\prod_{i=1}^\infty \frac{1-\be^+_i}{1+\al^+_i}\cdot
 e^{\ga^+u}\prod_{i=1}^\infty \frac{1+\wt\be^+_iu}{1-\wt\al^+_iu}\,,
$$
where
$$
\wt\alpha_i=\frac{\al_i^+}{1+\al_i^+}\,,\quad \wt\be_i^+=\frac{\be_i^+}{1-\be_i^+}\,,\qquad i\ge 1.
$$
Let us substitute $\omega^+=\epsilon\omega$, where
$\om=(\al,\be,\de)\in\wt\Om$. We obtain
$$
\Phi(u;\omega^+,\underline{0}) =e^{-\epsilon\delta}(1+O(\epsilon^2))\cdot
e^{\epsilon\ga u}\prod_{i=1}^\infty
\frac{1+(\epsilon+O(\epsilon^2))\be_iu}{1-(\epsilon+O(\epsilon^2))\al_iu}\,,
$$
where $\ga=\de-\sum_{i\ge1}(\al_i+\be_i)$. All the $O(\epsilon^2)$ terms above
are uniform in $i\ge 1$.

Hence,
$$
\varphi_n(\epsilon\om,\underline{0})=h_n(\om)\epsilon^n+O(\epsilon^{n+1}),\quad n\ge 1,\qquad
\varphi_0(\epsilon\om, \underline{0})=e^{-\epsilon\delta} (1+O(\epsilon^2)).
$$

Taking $N=N(\epsilon)\sim r\epsilon^{-1}$ we see that the determinant \eqref{eq:add4} is
asymptotically equal to
$$
\det\bigl[h_{\mu_i-i+j}(\om)\bigr]_{i,j=1}^\ell \epsilon^{|\mu|}e^{-r\delta} =
S_\mu(\om)\,\epsilon^{|\mu|} e^{-r|\om|},
$$
and, using the hook formula for $\Dim[\mu,N]$ and $\dim\mu$,
\begin{equation}
\label{eq:add5} \Dim[\mu,N]=\frac{\dim\mu}{|\mu|!}\cdot(N)_\mu \sim
\frac{\dim\mu}{|\mu|!}\cdot N^{|\mu|}\sim \frac{\dim\mu}{|\mu|!}r^{|\mu|} \cdot
\epsilon^{-|\mu|}.
\end{equation}
When we multiply these two expressions the factors $\epsilon^{\pm|\mu|}$ cancel
out, and we obtain exactly
$$
\LYB^\infty_r(\om,\mu)=\frac{r^{|\mu|}\dim\mu}{|\mu|!}\,e^{-r|\om|}S_\mu(\om).
$$
\end{proof}

\subsection{ZW-Measures on $\GT$}
Let $\mathcal Z\subset \C^2$ be the disjoint union of the following three sets:
\begin{gather}
\{(z,z')\in\C^2\setminus\R^2\mid z'=\bar z\}, \label{eq36}\\
\{(z,z')\in\R^2\mid \exists m\in\Z, \, m<z,z<m+1\}, \label{eq37}\\
\{(z,z')\in\R^2\mid \exists m\in\Z, \, z=m, \,z'>m-1, \quad \textrm{or} \quad
z'=m,\, z>m-1\}\label{eq35}
\end{gather}
Note that if $(z,z')\in\mathcal Z$, then $z+z'$ is real.

Denote by $\mathcal D_{\operatorname{adm}}$ the subset in $\C^4$ formed by all
quadruples $\zw$ of complex numbers such that:
\begin{itemize}

\item $(z,z')\in\mathcal Z$, $(w,w')\in\mathcal Z$;

\item $z+z'+w+w'>-1$;

\item if both couples $(z,z')$ and $(w,w')$ belong to subsets \eqref{eq35} with
indices $m$ and $\wt m$, respectively, then it is additionally required that
$m+\wt m\ge1$.

\end{itemize}

\begin{definition}
The {\it zw-measure\/} on $\GT_N$ with parameters $\zw\in\mathcal
D_{\operatorname{adm}}$ is given by
$$
\MGT^\zw_N(\mu)= C^\zw_N\cdot\Pi^\zw_N(\mu)\cdot(\Dim[\mu,N])^2
$$
where $\mu$ ranges over $\GT_N$,
$$
C^\zw_N=\prod_{i=1}^N
\frac{\Gamma(z+w+i)\Gamma(z+w'+i)\Gamma(z'+w+i)\Gamma(z'+w'+i)\Gamma(i)}
{\Gamma(z+z'+w+w'+i)}
$$
is a normalization constant, and
\begin{multline*}
\Pi^\zw_N(\mu)\\
=\prod_{i=1}^N \frac1{\Gamma(z-\mu_i+i)\Gamma(z'-\mu_i+i)
\Gamma(w+N+1+\mu_i-i)\Gamma(w'+N+1+\mu_i-i)}.
\end{multline*}

\end{definition}

Note that the measure does not change under transposition $z\leftrightarrow z'$
or $w\leftrightarrow w'$. If both couples $\z$ and $(w,w')$ belong to subset
\eqref{eq36} or subset \eqref{eq37}, that is, none of the four parameters is an
integer, then the expression for $\Pi^\zw_N(\mu)$ is strictly positive for all
$\mu\in\GT_N$. If some of the parameters are integers, then $\Pi^\zw_N(\mu)$
vanishes for some signatures $\mu$. Moreover, if both $\z$ and $(w,w')$ are in
subset \eqref{eq35}, then it may even happen that the normalizing constant has
a singularity, but in such a case the singularity actually disappears after
multiplication by $\Pi^\zw_N(\mu)$ due to cancellation with zeros arising from
appropriate $(1/\Ga(\,\cdot\,))$-factors in $\Pi^\zw_N(\mu)$. Thus, the whole
expression for $\MGT^\zw_N(\mu)$ makes sense for all $\zw\in\mathcal
D_{\operatorname{adm}}$. For more detail, see \cite[Section 7]{Ols03c} and
\cite[Section 3]{BO05a}.

\begin{proposition}
The zw-measures are probability measures, and they are compatible with the
links $\LGT^{N+1}_N:\GT_{N+1}\dasharrow \GT_N$.
\end{proposition}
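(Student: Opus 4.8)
The plan is to treat the three assertions---nonnegativity, total mass one, and coherence with the links---in that logical order, reducing each to a classical hypergeometric summation. Throughout I would pass to the \emph{particle coordinates} $l_i=\mu_i+N-i$, $1\le i\le N$, which are pairwise distinct. In these coordinates Weyl's formula turns $\Dim[\mu,N]$ into the Vandermonde $\prod_{i<j}(l_i-l_j)$ divided by $\prod_{k=1}^{N-1}k!$, and $\Pi^\zw_N(\mu)$ factors as $\prod_{i=1}^N w_N(l_i)$ with the one-variable hypergeometric weight
$$
w_N(l)=\frac1{\Ga(z+N-l)\,\Ga(z'+N-l)\,\Ga(w+1+l)\,\Ga(w'+1+l)}\,.
$$
Thus $\MGT^\zw_N$ is a discrete orthogonal-polynomial ($\beta=2$) ensemble with a weight of Hahn type, and all three claims become statements about such ensembles. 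Note that $(\Dim[\mu,N])^2>0$ always, so only the sign of $C^\zw_N\,\Pi^\zw_N(\mu)$ requires analysis.

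For nonnegativity I would split according to the three series \eqref{eq36}--\eqref{eq35}. On the principal series $z'=\bar z$, $w'=\bar w$ the four $\Ga$-factors in $w_N(l)$ pair into $|\Ga(z+N-l)|^2|\Ga(w+1+l)|^2$, and in each term of $C^\zw_N$ the factors pair into $|\Ga(z+w+i)|^2\,|\Ga(z+w'+i)|^2/\Ga(2\Re z+2\Re w+i)$ with positive denominator, so positivity is manifest. On the complementary series \eqref{eq37} the signs of the remaining real $\Ga$-factors are controlled by the reflection formula $\Ga(x)\Ga(1-x)=\pi/\sin(\pi x)$ together with the constraint $m<z,z'<m+1$; and on the degenerate series \eqref{eq35} the vanishing of $\Pi^\zw_N(\mu)$ at forbidden $\mu$ cancels the poles of $C^\zw_N$ exactly as flagged in the text, leaving a nonnegative quantity.

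For coherence I would first use that the $\GT$ branching is multiplicity free, so $\Dim([\mu,N],[\nu,N+1])=\mathbf{1}[\mu\prec\nu]$; substituting \eqref{eq23} and cancelling the common factor $\Dim[\mu,N]$ reduces $\MGT^\zw_{N+1}\,\LGT^{N+1}_N=\MGT^\zw_N$ to the single identity
$$
C^\zw_{N+1}\sum_{\nu:\,\mu\prec\nu}\Pi^\zw_{N+1}(\nu)\,\Dim[\nu,N+1]
=C^\zw_N\,\Pi^\zw_N(\mu)\,\Dim[\mu,N]\,.
$$
In particle coordinates this is a Vandermonde-weighted sum over the configurations $m_1,\dots,m_{N+1}$ interlacing $l_1,\dots,l_N$, which, after a determinantal rearrangement of the Vandermonde factor, reduces to classical bilateral $\Ga$-function summations of Dougall's ${}_2H_2$ type; the admissibility constraint $z+z'+w+w'>-1$ guarantees their absolute convergence. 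This identity is the \emph{heart} of the argument and the main obstacle: the coupling through the Vandermonde must be resolved before the classical summation applies, and the bookkeeping of the ratio $C^\zw_{N+1}/C^\zw_N$ has to match the value produced by the ${}_2H_2$ evaluation.

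Finally, total mass is almost free once coherence is in hand: summing the coherence relation over $\mu\in\GT_N$ and using that each row of $\LGT^{N+1}_N$ sums to one shows that $\sum_{\mu\in\GT_N}\MGT^\zw_N(\mu)$ is independent of $N$, so it suffices to evaluate it at $N=1$. There $\Dim[\mu,1]\equiv1$ and the sum over $\mu_1\in\Z$ is precisely Dougall's bilateral ${}_2H_2$ summation, whose value is $1/C^\zw_1$, giving total mass $1$. Alternatively, both assertions follow from the representation-theoretic origin of the zw-measures as the decomposition measures of the generalized regular representations, where compatibility under the restriction $U(N+1)\to U(N)$ is automatic; see \cite{Ols03c}, \cite{BO05a}.
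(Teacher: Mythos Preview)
The paper does not actually prove this proposition: its entire proof consists of the single sentence ``A proof can be found in \cite[Section 7]{Ols03c}.'' So there is no argument in the paper to compare against; what you have written is a genuine (and substantially correct) outline of the proof that the cited reference carries out.

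Your sketch matches the approach of \cite{Ols03c} closely: pass to particle coordinates $l_i=\mu_i+N-i$, recognize $\MGT^\zw_N$ as a discrete $\beta=2$ ensemble with the bilateral Hahn-type weight $w_N$, handle nonnegativity series-by-series, reduce normalization at $N=1$ to Dougall's bilateral ${}_2H_2$ evaluation, and reduce coherence to an interlacing sum that, after a determinantal manipulation of the Vandermonde, collapses to the same hypergeometric identity. Your identification of the coherence step as the ``heart'' is accurate: in \cite{Ols03c} the requisite summation is isolated as a separate proposition (the ``coherency relation'') and proved via the gamma-function identity you name; the matching of $C^\zw_{N+1}/C^\zw_N$ with the Dougall value is exactly the bookkeeping done there. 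Your closing remark that one can alternatively invoke the representation-theoretic origin of the zw-measures (decomposition of generalized regular representations, where compatibility under $U(N+1)\supset U(N)$ is automatic) is also legitimate and is the conceptual reason the identity holds. In short, your proposal is correct and is essentially a compressed version of the proof the paper defers to; the only caveat is that the ``determinantal rearrangement'' you allude to for the interlacing sum does require a page or two of honest work, which you rightly flag as the main obstacle.
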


A proof can be found in \cite[Section 7]{Ols03c}.

Similarly to z-measures that arise in harmonic analysis on the infinite
symmetric group, the zw-measures play a key role in harmonic analysis on the
infinite-dimensional unitary group, see \cite{Ols03c}, \cite{BO05a},
\cite{BO05b}.

\subsection{Degeneration of zw-measures to z-measures}
It is convenient to rewrite the expression for the zw-measures in a slightly
different form:
$$
\MGT^\zw_N(\mu)= \MGT^\zw_N(0^N)\cdot\wt{\Pi}^\zw_N(\mu)\cdot(\Dim[\mu,N])^2
$$
where $0^N=(0,\dots,0)\in\GT_N$ is the zero signature,
$$
\MGT^\zw_N(0^N)=\prod_{i=1}^N\frac{\Gamma(z+w+i)\Gamma(z+w'+i)
\Gamma(z'+w+i)\Gamma(z'+w'+i)\Gamma(i)}{\Gamma(z+z'+w+w'+i)
\Gamma(z+i)\Gamma(z'+i)\Gamma(w+i)\Gamma(w'+i)}\,,
$$
and
\begin{equation}\label{eq1}
\begin{aligned} \wt{\Pi}^\zw_N(\mu)&=\prod_{i=1}^N
\frac{\Gamma(z+i)\Gamma(z'+i)}{\Gamma(z-
\mu_i+i)\Gamma(z'-\mu_i+i)}\\
&\times\prod_{i=1}^N\frac{\Gamma(w+N+1-i)\Gamma(w'+N+1-i)}
{\Gamma(w+N+1+\mu_i-i)\Gamma(w'+N+1+\mu_i-i)}
\end{aligned}
\end{equation}

Assume that $w=0$ while $w'$ is real positive and large enough. Then the
expression for $\MGT^\zw_N(0^N)$ simplifies:
$$
\MGT^\zw_N(0^N)=\prod_{i=1}^N\frac{\Gamma(z+w'+i)
\Gamma(z'+w'+i)}{\Gamma(z+z'+w'+i) \Gamma(w'+i)}\,,
$$
Since $w'$ is assumed to be large, this quantity is nonsingular.

Further, observe that if $\mu_N<0$, then $\wt{\Pi}^\zw_N(\mu)$ vanishes because
$$
\frac{\Ga(w+N+1-i)}{\Ga(w+N+1+\mu_i-i)}\Bigg|_{w=0, \,
i=N}=\frac1{\Ga(1+\mu_N)}=0
$$
and this zero cannot be cancelled after multiplication by $\MGT^\zw_N(0^N)$.
This means that the zw-measure with $w=0$ and $w'$ real and large enough is
concentrated on nonnegative signatures. Thus, we may assume that the measure
lives on the set $\GT^+_N$, which we regard as a subset of $\Y$.

Observe that if $(z,z')$ is admissible in the sense explained in Section
\ref{sc:zmeasures}, parameter $w$ equals $0$, and parameter $w'$ is real and
large enough, then the quadruple $\zw$ belongs to the set $\mathcal
D_{\operatorname{adm}}$ so that the corresponding zw-measure
$\MGT^{(z,z',0,w')}_N$ is well defined for all $N$.

\begin{theorem}\label{thm6}
Fix an arbitrary admissible couple $\z$ of parameters and let parameter $w$
equal\/ $0$ while parameter $w'$ is positive and goes to $+\infty$. Assume that
$N$ varies together with $w'$ in such a way that $N\sim rw'$ with an arbitrary
fixed $r>0$. Then the corresponding zw-measures $\MGT^{(z,z',0,w')}_N$ weakly
converge to the z-measure $\MYB^{(-z,-z')}_r$ on $\Y$.
\end{theorem}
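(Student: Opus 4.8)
The plan is to reduce the asserted weak convergence to pointwise convergence of the masses and then to analyze the three factors of the product formula for $\MGT^\zw_N(\mu)$ separately. Since $\Y$ is a countable discrete space, weak convergence of probability measures on it is equivalent to pointwise convergence of the masses to a probability measure: if $\MGT^{(z,z',0,w')}_N(\mu)\to\MYB^{(-z,-z')}_r(\mu)$ for every fixed $\mu\in\Y$ and the limit has total mass $1$, then Scheff\'e's lemma upgrades pointwise convergence to convergence in total variation, hence to weak convergence. The limit is indeed a genuine z-measure of total mass $1$, because $(-z,-z')$ is admissible: $(-z)_\mu(-z')_\mu=(z)_{\mu'}(z')_{\mu'}\ge0$ and $-z,-z'\neq0$. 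So the whole task is the pointwise limit, which I would organize through the factorization $\MGT^{(z,z',0,w')}_N(\mu)=\MGT^{(z,z',0,w')}_N(0^N)\cdot\wt\Pi^{(z,z',0,w')}_N(\mu)\cdot(\Dim[\mu,N])^2$.

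Fix $\mu$ with $m=|\mu|$ and $\ell=\ell(\mu)$, and take $N\ge\ell$, so that all factors with index $i>\ell$ in \eqref{eq1} are trivial. The first product in $\wt\Pi^{(z,z',0,w')}_N(\mu)$ is then independent of $N$ and $w'$, and telescoping the Gamma ratios via $\Gamma(z+i)/\Gamma(z-\mu_i+i)=\prod_{k=1}^{\mu_i}(z+i-k)$ gives the exact identity
\[
\prod_{i=1}^\ell\frac{\Gamma(z+i)\Gamma(z'+i)}{\Gamma(z-\mu_i+i)\Gamma(z'-\mu_i+i)}=(-z)_\mu(-z')_\mu,
\]
which is precisely the Pochhammer factor of the target measure. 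The second product of \eqref{eq1} (with $w=0$) behaves like $N^{-m}(w'+N)^{-m}$ as $N,w'\to\infty$, since each of its Gamma ratios supplies $\mu_i$ large factors. Combining this with $(\Dim[\mu,N])^2\sim(\dim\mu/m!)^2\,N^{2m}$ from \eqref{eq:add5} cancels the powers $N^{\pm2m}$, and the scaling $N\sim rw'$ gives $w'+N\sim\frac{1+r}{r}N$, so that
\[
\wt\Pi^{(z,z',0,w')}_N(\mu)\cdot(\Dim[\mu,N])^2\longrightarrow(-z)_\mu(-z')_\mu\left(\frac{r}{1+r}\right)^{m}\left(\frac{\dim\mu}{m!}\right)^2 .
\]
This is exactly $\MYB^{(-z,-z')}_r(\mu)$ up to the overall constant $(1+r)^{-zz'}$.

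It remains to show $\MGT^{(z,z',0,w')}_N(0^N)\to(1+r)^{-zz'}$, and I expect this to be the main obstacle. Writing the logarithm of the normalization as $\sum_{i=1}^N[\log\Gamma(z+w'+i)+\log\Gamma(z'+w'+i)-\log\Gamma(z+z'+w'+i)-\log\Gamma(w'+i)]$ and applying $\log(\Gamma(x+a)/\Gamma(x+b))=(a-b)\log x+\tfrac{(a-b)(a+b-1)}{2x}+O(x^{-2})$ with $x=w'+i$, the leading $\log x$ contributions cancel completely, leaving $-zz'\sum_{i=1}^N(w'+i)^{-1}+O(1/w')$. The surviving harmonic sum behaves like $\log\frac{w'+N}{w'}\to\log(1+r)$ under $N\sim rw'$, so $\log\MGT^{(z,z',0,w')}_N(0^N)\to-zz'\log(1+r)$. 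The subtlety is genuine: the leading order vanishes, forcing one to track the $O(1/x)$ term precisely, and the factor $1+r$ (rather than $r$) emerges only from the correct asymptotics of a harmonic sum running up to $N\sim rw'$. This is the single place where the exact scaling and the value of the limit interact nontrivially. Once this limit is established, multiplying it against the preceding display yields $\MGT^{(z,z',0,w')}_N(\mu)\to\MYB^{(-z,-z')}_r(\mu)$ pointwise, and the Scheff\'e argument of the first paragraph completes the proof.
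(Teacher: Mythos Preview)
Your proof is correct and follows essentially the same route as the paper: the same factorization $\MGT=\MGT(0^N)\cdot\wt\Pi\cdot(\Dim)^2$, the same exact evaluation of the first product in $\wt\Pi$ as $(-z)_\mu(-z')_\mu$, and the same Stirling-type analysis of the remaining factors. The only cosmetic differences are that the paper handles the normalization constant multiplicatively (rewriting $\prod_i(1-zz'/(w'+i))$ as a ratio of Gamma functions) whereas you take logarithms, and that you make the reduction to pointwise convergence explicit via Scheff\'e's lemma.
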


\begin{proof}
It suffices to prove that for any fixed $\mu\in\Y$ and $w'=w'(N)\sim r^{-1}N$,
$$
\lim_{N\to\infty}\MGT^{(z,z',0, w')}_N(\mu)=\MYB^{(-z,-z')}_r(\mu).
$$

{\it Step\/} 1. Let us prove this for $\mu=\varnothing$, which amounts to
$$
\lim_{N\to\infty}\prod_{i=1}^N\frac{\Gamma(z+w'+i)
\Gamma(z'+w'+i)}{\Gamma(z+z'+w'+i) \Gamma(w'+i)}=(1+r)^{-zz'}, \qquad
w'=w'(N)\sim r^{-1}N.
$$

Stirling's formula implies
\begin{gather*}
\frac{\Gamma(z+w'+i)}{\Gamma(w'+i)}=(w'+i)^z\left(1+\frac {z(z-1)}{2(w'+i)}+O\left(N^{-2}\right)\right),\\
\frac{\Gamma(z'+w'+i)}{\Gamma(z+z'+w'+i)}=(z'+w'+i)^{-z}\left(1-\frac
{z(z-1)}{2(z'+w'+i)}+O\left(N^{-2}\right)\right).
\end{gather*}
(Note that, since $w'$ is a large positive number, the arguments of the complex
numbers $z'+w'+i$ are small.) Hence,
$$
\frac{\Gamma(z+w'+i)\Ga(z'+w'+i)}{\Ga(z+z'+w'+i)\Ga(w'+i)}
=\left(\frac{w'+i}{z'+w'+i}\right)^z\left(1+O(N^{-2})\right).
$$
Next, Taylor series type argument shows that
$$
\left(\frac{w'+i}{z'+w'+i}\right)^z=\left(1-\frac{zz'}{w'+i}\right)\left(1+O(N^{-2})\right).
$$
Thus,
$$
\prod_{i=1}^N\frac{\Ga(z+w'+i)\Ga(z'+w'+i)}{\Ga(z+z'+w'+i)\Ga(w'+i)}
=\prod_{i=1}^N\left(1-\frac{zz'}{w'+i}\right)\cdot\left(1+O(N^{-1})\right).
$$
But
\begin{gather*}
\prod_{i=1}^N\left(1-\frac{zz'}{w'+i}\right) =\prod_{i=1}^N\frac{ -zz'+w'+i}{w'+i}
=\frac{\Ga(-zz'+w'+N+1)\Ga(w'+1)}{\Ga(-zz'+w'+1)\Ga(w'+N+1)}\\
=\frac{\Ga(-zz'+w'+N+1)}{\Gamma(w'+N+1)}\,\frac{\Ga(w'+1)}{\Ga(-zz'+w'+1)}
\sim\left(\frac{r^{-1}}{r^{-1}+1}\right)^{zz'}=(1+r)^{-zz'},
\end{gather*}
which gives the desired result.

Note that on the last step we used the well-known asymptotic relation
\begin{equation}\label{eq2}
\frac{\Ga(x+a)}{\Ga(x+b)}\sim x^{a-b}, \qquad \textrm{$x>0$ large}.
\end{equation}

{\it Step\/} 2. It remains to prove that
$$
\lim_{N\to\infty}
\frac{\MGT^{(z,z',0,w')}_N(\mu)}{\MGT^{(z,z',0,w')}_N(\varnothing)}
=\frac{\MYB^{(-z,-z')}_r(\mu)}{\MYB^{(-z,-z')}_r(\varnothing)}, \qquad
w'=w'(N)\sim r^{-1}N,
$$
which amounts to
$$
\lim_{N\to\infty}\left(\wt{\Pi}^{(z,z',0,w')}_N(\mu)\,(\Dim[\mu,N])^2\right)
=(-z)_\mu(-z')_\mu
\left(\frac{r}{1+r}\right)^{|\mu|}\left(\frac{\dim\mu}{|\mu|!}\right)^2
$$

Recall that $\wt{\Pi}^{(z,z',0,w')}_N(\mu)$ is given by formula \eqref{eq1}, which
involves two products over $i=1,\dots,N$. Observe that the $i$th factor in each
of the two products equals 1 when $\mu_i=0$. Since $\mu$ is a Young diagram,
this allows us to restrict each of the products to indices $i=1,\dots,\ell$,
where $\ell$ stands for the number of nonzero rows in $\mu$. Since $\ell$ does
not depend on $N$, we may examine the asymptotics of the factors corresponding
to each $i$ separately.  Using again \eqref{eq2} we get
\begin{align*}
\wt{\Pi}^{(z,z',0,w')}_N(\mu)&\sim\prod_{i=1}^\ell
\frac{\Gamma(z+i)\Gamma(z'+i)}{\Gamma(z- \mu_i+i)\Gamma(z'-\mu_i+i)} \cdot
\left(\frac1{N^2(r^{-1}+1)}\right)^{|\mu|}\\
&=(-z)_\mu(-z')_\mu\left(\frac{r}{1+r}\right)^{|\mu|}N^{-2|\mu|}.
\end{align*}

Further, \eqref{eq:add5} gives
$$
(\Dim[\mu,N])^2\sim\left(\frac{\dim\mu}{|\mu|!}\right)^2\cdot N^{2|\mu|}.
$$

Therefore,
\begin{align*}
\lim_{N\to\infty}\frac{\MGT^{(z,z',0,w')}_N(\mu)}{\MGT^{(z,z',0,w')}_N(\varnothing)}
&= (-z)_\mu(-z')_\mu\left(\frac{r}{1+r}\right)^{|\mu|}
\left(\frac{\dim\mu}{|\mu|!}\right)^2\\
&=\frac{\MYB^{(-z,-z')}_r(\mu)}{\MYB^{(-z,-z')}_r(\varnothing)}.
\end{align*}
\end{proof}

\section{Gibbs measures on the path space}\label{sect5}

\subsection{Gibbs measures}\label{sect5.1}
Let $\Ga$ be a graded graph (Definition \ref{def2}). By a (monotone) {\it
path\/} in $\Ga$ we mean a finite or infinite collection
$$
v_1, e_{12}, v_2, e_{23}, v_3, \dots
$$
where $v_1, v_2, \dots$ are vertices of $\Ga$ such that $|v_{i+1}|=|v_i|+1$ and
$e_{i,i+1}$ is an edge between $v_i$ and $v_{i+1}$. Since we do not consider
more general paths, the adjective ``monotone'' will be omitted. If the graph
has no multiple edges, then every path is uniquely determined by its vertices,
but when multiple edges occur it is necessary to specify which of the edges
between every two consecutive vertices is selected.

Unless otherwise stated, we will assume that the paths start at the lowest
level of the graph. Then the {\it path space\/} $\TT=\TT(\Ga)$ is defined as
the set of all infinite paths.

A {\it cylinder set\/} in $\TT$ is the subset of infinite paths with a
prescribed initial part of finite length. We equip $\TT$ with the Borel
structure generated by the cylinder sets.

\begin{definition}
Let $P$ be a probability measure $P$ on $\TT$. Let us call $P$ a {\it Gibbs
measure\/} if any two initial finite paths with the same endpoint are
equiprobable. Equivalently, the measure of any cylinder set depends only on the
endpoint of the initial part that defines the set.
\end{definition}

This kind of measures on the path space was introduced by Vershik and Kerov
\cite{VK81} under the name of {\it central measures\/}.

As above, consider the projective chain $\{V_N,\La^{N+1}_N\}$ associated with
the graph $\Ga$, so that $V_N$ is the set of vertices of level
$N=1,2,\dots\,$.

\begin{proposition}
There is a natural bijective correspondence between the Gibbs measures on the
path space and coherent systems of measures
$$
\{M_N\}_{N\ge 1}\in\M_\infty=\varprojlim\M(V_N).
$$
\end{proposition}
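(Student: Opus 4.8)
The plan is to construct the bijection explicitly in both directions and verify that the Gibbs condition corresponds precisely to coherence under the links $\La^{N+1}_N(v,u)=\dim u\cdot\dim(u,v)/\dim v$. First I would start from a Gibbs measure $P$ on $\TT$. For each level $N$ and each vertex $v\in V_N$, define $M_N(v)$ to be the $P$-measure of the cylinder set of all infinite paths passing through $v$ at level $N$. Since the cylinder sets indexed by the vertices of level $N$ partition $\TT$, each $M_N$ is automatically a probability measure on $V_N$, i.e.\ $M_N\in\M(V_N)$.

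The heart of the argument is checking the coherence relation $M_{N+1}\La^{N+1}_N=M_N$. Here I would use the Gibbs property in the form that all initial paths to a common endpoint are equiprobable: if $P_N(v)$ denotes the common probability of a single initial path of length $N-1$ ending at $v$, then $M_N(v)=\dim v\cdot P_N(v)$, since there are exactly $\dim v$ such paths. Summing the cylinder set through $v$ at level $N$ over its refinements through level-$(N+1)$ vertices $u$, and using that each edge from $v$ to $u$ extends each initial path to $v$, one gets
$$
M_N(v)=\sum_{u:\,|u|=N+1}\dim v\cdot\dim(v,u)\cdot P_{N+1}(u)=\sum_{u}\dim(v,u)\,\frac{\dim v}{\dim u}\,M_{N+1}(u),
$$
which is exactly the relation $\sum_u M_{N+1}(u)\La^{N+1}_N(u,v)=M_N(v)$. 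So the system $\{M_N\}$ is coherent.

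Conversely, given a coherent system $\{M_N\}$, I would define $P$ on cylinder sets: a cylinder set specified by an initial path of length $N-1$ ending at $v$ gets measure $M_N(v)/\dim v$ (the endpoint weight distributed equally among the $\dim v$ paths, which is the Gibbs prescription). The main obstacle is verifying that this assignment is consistent — that refining a cylinder set by one more level gives measures summing correctly to the original — which again reduces to the coherence relation and the identity $\dim u=\sum_{v:\,|v|=N-1}\dim v\cdot\dim(v,u)$ recorded just before the relevant definition in the excerpt. Once consistency on the algebra of cylinder sets is established, Kolmogorov's extension theorem (the cylinder sets generate the Borel structure on $\TT$) produces a unique probability measure $P$, which is Gibbs by construction. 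Finally I would check that the two constructions are mutually inverse, which is immediate since both are pinned down by the same quantities $M_N(v)=\dim v\cdot P_N(v)$.
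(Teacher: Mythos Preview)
Your proposal is correct and follows essentially the same approach as the paper's own proof: define $M_N$ as the level-$N$ marginal of $P$, verify coherence via the Gibbs property, and for the converse invoke Kolmogorov's extension theorem on the consistent cylinder assignments. The paper's proof is a terse two-sentence sketch (``compatible \dots by the very construction of these links'' and ``obtained by making use of Kolmogorov's extension theorem''), whereas you have simply spelled out the computations that justify those phrases.
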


\begin{proof}
Indeed, given a Gibbs measure $P$, define for each $N$ a probability measure
$M_N\in\M(V_N)$ as follows: For any $v\in V_N$, $M_N(v)$ equals the probability
that the infinite random path distributed according to $P$ passes through $v$.
The measures $M_N$ are compatible with the links $\La^{N+1}_N$ by the very
construction of these links. Therefore, the sequence $(M_N)$ determines an
element of $\M_\infty$. The inverse map, from $\M_\infty$ to Gibbs measures, is
obtained by making use of Kolmogorov's extension theorem.
\end{proof}

Together with Theorem \ref{thm1} this implies

\begin{corollary}\label{cor:gibbs}
There is a bijection between the Gibbs measures on the path space of\/ $\Ga$
and the probability measures on the boundary $\pd\Ga$.
\end{corollary}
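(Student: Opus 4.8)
The plan is to obtain the asserted bijection simply by composing two correspondences that are already in hand, so that no new analysis is required. First I would invoke the Proposition immediately preceding the statement: it provides a natural bijection between the Gibbs (central) measures on the path space $\TT(\Ga)$ and the coherent systems $\{M_N\}_{N\ge 1}\in\M_\infty=\varprojlim\M(V_N)$. Concretely, in one direction a Gibbs measure $P$ is sent to the family $M_N(v):=P(\{\textrm{paths through }v\})$, and in the other direction a coherent family is turned into a path measure via Kolmogorov's extension theorem. Second, I would appeal to Theorem \ref{thm1}, which — under the standing assumption that $\M_\infty$ is nonempty — identifies $\M_\infty$ with $\M(V_\infty)$ through an isomorphism of measurable spaces, where $V_\infty=\pd\Ga$ is the boundary. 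Composing these two bijections yields the desired identification of Gibbs measures on $\TT(\Ga)$ with probability measures on $\pd\Ga$.

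To make the composite map explicit (and thereby confirm it is genuinely a bijection rather than merely a set-theoretic pairing), I would trace a probability measure $P$ on $\pd\Ga$ through both steps. The canonical links $\La^\infty_N:V_\infty\dasharrow V_N$ of the boundary send $P$ to the coherent family $M_N(\,\cdot\,)=\int_{V_\infty}\La^\infty_N(\om,\,\cdot\,)\,P(d\om)$, which is exactly the barycenter map underlying Theorem \ref{thm1}; feeding this family into the Proposition produces the corresponding Gibbs measure. Running the arrows backward — a Gibbs measure determines its coherent family of level distributions, which by Theorem \ref{thm1} is the mixture of a unique $P\in\M(V_\infty)$ — shows the two directions are mutually inverse.

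The only items that merit a moment's attention are bookkeeping rather than substance. One should note that the hypothesis of Theorem \ref{thm1} is met: the tacit assumption $\M_\infty\neq\varnothing$ adopted in Section \ref{sect2.2} is in force, and in any case the existence of a single Gibbs measure already forces $\M_\infty$ to be nonempty. One should also observe that both correspondences respect the measurable structures, so the composite is not merely a bijection of sets but is compatible with the Borel structures involved — but this is inherited directly, since Theorem \ref{thm1} furnishes an isomorphism of measurable spaces and the Proposition's map is built from explicit cylinder-set probabilities. I do not expect a genuine obstacle here: the content of the Corollary is entirely carried by the two results it is assembled from, and the proof is a formal composition.
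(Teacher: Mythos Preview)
Your proposal is correct and follows exactly the same approach as the paper: the paper simply states that the Proposition together with Theorem~\ref{thm1} implies the Corollary, which is precisely the composition you describe. Your additional remarks about tracing the maps explicitly and verifying the measurability are accurate elaborations, but the paper itself leaves these implicit.
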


Note that the random paths distributed according to a Gibbs measure can be
viewed as trajectories of a Markov chain with discrete time $N$ that flows
backwards from $+\infty$ to $0$ and transition probabilities $\Lambda^{N+1}_N$.
Then $\partial\Gamma$ plays the role of the entrance boundary, and probability
measures on $\partial \Gamma$ turn into entrance laws for the Markov chain.

\subsection{Examples of path spaces for graded graphs}\label{sect5.2}

(a) For the Pascal graph $\Ga=\Pas$, the path space can be identified with the
space $\{0,1\}^\infty$ of infinite binary sequences. Under this identification,
the Gibbs measures are just the exchangeable measures on $\{0,1\}^\infty$, and
the claim of Corollary \ref{cor:gibbs} turns into the classical de Finetti
theorem: exchangeable probability measures on $\{0,1\}^\infty$ are parametrized
by probability measures on $[0,1]$.

(b) Consider the Young graph $\Ga=\Y$. Recall that for a Young diagram
$\la\in\Y$, a standard Young tableau of shape $\la$ is a filling of the boxes
of $\la$ by numbers $1,2,\dots,|\la|$ in such a way that the numbers increase
along each row from left to right and along each column from top to bottom.

Let us also define an {\it infinite Young diagram\/} as an infinite subset
$\wt\la\subseteq\mathbb N\times\mathbb N$ (where $\mathbb N:=\{1,2,\dots\}$)
such that if $(i,j)\in\wt\la$, then $\wt\la$ contains all pairs $(i',j')$ with
$i'\le i$, $j'\le j$.  An {\it infinite standard tableau\/} of shape $\wt\la$
is an assignment of a positive integer to any pair $(i,j)\in\wt\la$ in a such a
way that all positive integers are used, and they increase in both $i$ and $j$.
If we only pay attention to where the integers $1,2,\dots,n$ are located, we
will observe a Young tableau whose shape is a Young diagram $\la\subset\wt\la$
with $n$ boxes. Let us call this finite tableau the {\it $n$-truncation\/} of
the original infinite one.

Clearly, the infinite paths in the Young graph are in one-to-one correspondence
with the infinite Young tableaux. The initial finite parts of such a path are
described by the various trancations of the corresponding tableau. The
condition of a measure on infinite Young tableaux being Gibbs consists in the
requirement that the probability of observing a prescribed truncation depends
only on the shape of the truncation (and not on its filling).

(c) Let us proceed to the Gelfand-Tsetlin graph $\Ga=\GT$. By definition, an
infinite path in $\GT$ is a sequence $\la^{(1)}\prec\la^{(2)}\prec\dots$ with
$\la^{(N)}\in\GT_N$. If one defines $x_{i}^j=\lambda^{(j)}_i$ then one easily
sees that such paths are in one-to-one correspondence with the infinite
triangular arrays $\{x_i^j\mid 1\le i\le j,\,j\ge 1\}$ of integers that satisfy
the inequalities
$$
x^{j+1}_i\ge x_i^j\ge x_{i+1}^{j+1}
$$
for all meaningful indices $(i,j)$. Such arrays are called infinite {\it
Gelfand-Tsetlin schemes\/}. The initial finite parts of infinite paths in a
similar way give rise to finite Gelfand-Tsetlin schemes. Infinite
Gelfand-Tsetlin schemes are also in one-to-one correspondence to certain
tilings of a half-plane by lozenges, see the introduction to \cite{BF08}.

If we restrict ourselves to the subgraph $\GT^+\subset\GT$,  then the
signatures can be identified with Young diagrams, and infinite paths may be
viewed as infinite {\it semi-standard\/} Young tableaux, where
``semi-standard'' refers to the condition that the filling numbers are only
required to {\it weakly\/} increase along rows, and they are also not required
to exhaust all positive integers. The finite paths of length $N$ then turn into
semi-standard Young tableaux whose shape has no more than $N$ rows.

(d) Other examples of Gibbs measures on path spaces related to exchangeability
can be found in Kingman \cite{Ki78}  (exchangeable partitions of $\mathbb N$),
Gnedin \cite{Gn97} (exchangeable ordered partitions of $\mathbb N$), Gnedin and
Olshanski \cite{GO06} (exchangeable orderings of $\mathbb N$).

\subsection{Path spaces for $\mathbb B$ and $\YB$}
Similarly to the case of graded graphs described above, one can define Gibbs
measures on paths corresponding to more general projective systems. Without
going into general definitions, let us describe the outcome in the cases of the
binomial system $\mathbb B$ and the Young bouquet $\YB$.

Recall that the levels of the binomial system $\mathbb B$ are labelled by
numbers $r\in\R_{>0}$ (strictly positive real numbers), and each level consists
of points $m\in \Z_+:=\{0,1,2,\dots\}$. It is convenient to denote these points
as pairs $(m,r)\in \Z_+\times \R_{>0}$, and also add the point $(0,0)$ at level
0.

An infinite path in $\mathbb B$ can be viewed as an integer-valued function
$m=m(r)$, $m(0)=0$, that is weakly
increasing, left-continuous, and has only jumps of size 1: $m(r+0)-m(r)\in\{0,1\}$
for any $r>0$.

The jump locations for $m(r)$ form a (possibly empty)  increasing sequence
$r_1<r_2<\dots$ tending to $+\infty$. Thus, a path in $\mathbb B$ may be
encoded by a locally finite point configuration in the space $\R_+$ of
nonnegative real numbers.

A probability measure on the infinite paths in $\mathbb B$ (equivalently, point
configurations in $\R_+$) is {\it Gibbs\/} if for any $n\ge 0$, under the
condition that a segment $[0,r]\subset \R_+$ contains exactly $n$ jumps at
$r_1,\dots,r_n$, the distribution of their locations is proportional to the
Lebesgue measure $dr_1\cdots dr_n$.

One shows that coherent systems on $\mathbb B$ are in one-to-one correspondence
with the Gibbs measures as defined above.

The extreme Gibbs measure corresponding to a point $x\in\R_+=\partial\mathbb
B$, $x\ne 0$, corresponds to the Poisson process on $\R_+$ with constant
intensity $x$. The extreme Gibbs measure corresponding to $x=0$ is the
delta-measure on the path $m(r)\equiv 0$.

A general Gibbs measure is thus a (possibly continuous) convex combination of
the delta-measure at the zero path and a random mix of the Poisson processes
with constant intensities, also known as a doubly stochastic Poisson process,
or a Cox process.

Let us proceed to $\YB$. The construction is a combination of those for $\Y$
and for $\mathbb B$.

Recall that an element of $\YB$ is a pair $(\la,r)\in\Y\times \R_+$ with the
condition that $\la=\varnothing$ if $r=0$. A path in $\YB$ is defined as a
monotonically increasing Young diagram-valued function $\la(r)$,
$\la(r')\supseteq\la(r)$ for $r'>r$, such that $(|\lambda(r)|,r)$ is a path in
$\mathbb B$.

Such a path can be encoded by a {\it generalized\/} standard Young tableau,
whose shape is a finite or infinite Young diagram and filling numbers are
positive reals (strictly increasing along rows and columns) that have no finite
accumulation points.

A finite initial part of a path is then given by the following data: a real
number $r>0$, an integer $n\ge 0$, a collection of $n$ numbers
$0<r_1<\dots<r_n\le r$, and a standard Young tableau whose shape has $n$ boxes.
The Gibbs property consists in requiring that the distribution of the
coordinates $r_1,\dots,r_n$ is proportional to the Lebesgue measure $dr_1\cdots
dr_n$ on the polytope in $\R_+^n$ cut out by the inequalities that guarantee
row and column monotonicity of the coordinates.

Once again, the Gibbs measures are in one-to-one correspondence with the
probability measures on $\wt\Omega=\partial \YB$.

Every probability measure $M$ on the boundary $\pd\B$ or $\pd\YB$ serves as the
entrance law of a Markov process on $\Z_+$ or $\Y$, respectively, with ``time''
$r$ ranging from $+\infty$ to $0$ (a more conventional picture is obtained by
taking as time $t:=-\log r$). The trajectories of this process are the paths as
described above, and the Gibbs measure corresponding to $M$ is the law of the
process.

\subsection{Path degeneration $\GT^+\to\YB$}
To conclude, let us see how the degeneration $\GT^+\to \YB$ described in
Theorem \ref{thm1} works on the level of Gibbs measures on paths. Consider all
finite paths in $\GT^+$ that have a given nonnegative signature $[\lambda,N]$
as their final point. They may be viewed as semi-standard Young tableaux of
shape $\la$ filled with (some of the) numbers $1,\dots,N$. By definition of the
Gibbs property, all those tableaux must have equal probabilities for any Gibbs
measure on the path space of $\GT^+$.

Let us further consider the asymptotics when $\la$ stays fixed and $N=rL$ with
a fixed $r>0$ and $L\gg 1$. Then if we take the random path in $\GT^+$ that
ends at $[\lambda,N]$ and divide the entries in the corresponding Young tableau
by $L$, we will observe a random generalized Young tableau of shape $\la$ with
filling numbers not exceeding $r$, or a finite path in $\YB$ ending at
$(\la,r)$. Its asymptotic distribution will be proportional to the Lebesgue
measure on the polytope of the filling numbers, and this is exactly what is
required by the Gibbs property on $\YB$.

\end{document}